\newcommand\ifpdf
\newcommand\inputfigeps[1]{\ifpdf
\includegraphics[scale=0.5]{#1.pdf}
\else
\includegraphics[scale = 0.5]{#1.eps}
\fi}
\newcommand\inputfigepsSmall[1]{\ifpdf
\includegraphics[scale=0.4]{#1.pdf}
\else
\includegraphics[scale = 0.4]{#1.eps}
\fi}
\DeclareMathOperator    \conv           {conv}
\DeclareMathOperator    \cone           {cone}
\DeclareMathOperator    \intr                   {int}
\DeclareMathOperator    \proj           {proj}
\DeclareMathOperator    \relint         {rel\,int}
\newcommand{\bb}{\mathbb}
\newcommand{\old}[1]{{}}
\newcommand{\R}{\bb R}
\newcommand{\Z}{\bb Z}
\newcommand\st{\mid}
\newtheorem{theorem}{Theorem}
\newtheorem{corollary}[theorem]{Corollary}
\newtheorem{lemma}[theorem]{Lemma}
\newtheorem{remark}[theorem]{Remark}
\newtheorem{claim}[theorem]{Claim}
\newtheorem{definition}[theorem]{Definition}
\newtheorem{prop}[theorem]{Proposition}
\renewcommand {\P}    {\mathcal{P}}
\newcommand   {\gp}   {\bar{g}}      
\newcommand   {\gt}    {\tilde{g}}     
\newcommand   {\gs}   {g}                
\renewcommand{\varepsilon}{\epsilon}
\begin{document}

\title{A $(k+1)$-Slope Theorem for the\\ $k$-Dimensional Infinite Group Relaxation}

\author{Amitabh Basu\thanks{Department of Mathematics, University of California, Davis,
abasu@math.ucdavis.edu}\and
Robert Hildebrand\thanks{Department of Mathematics, University of California, Davis,
rhildebrand@ucdavis.edu}\and
Matthias K\"oppe\thanks{Department of Mathematics, University of California, Davis,
mkoeppe@math.ucdavis.edu}\and
Marco Molinaro\thanks{Tepper School of Business, Carnegie Mellon University, Pittsburgh, molinaro@cmu.edu}}

\date{\today\thanks{$\relax$Revision: 143 $ - \ $Date: 2011-09-19 18:21:56 -0400 (Mon, 19 Sep 2011) $ $}}

\maketitle

\begin{abstract}
  We prove that any minimal valid function 
  for the $k$-dimensional infinite group relaxation that is piecewise linear with at
  most $k+1$ slopes and does not factor through a linear map with non-trivial
  kernel is extreme. This generalizes a theorem of Gomory and Johnson for
  $k=1$, and Cornu\'ejols and Molinaro for $k=2$.
\end{abstract}


        \section{Introduction}

                Generation of valid inequalities is a very important topic in integer programming, with a numerous literature from both theoretical and computation perspectives \cite{Bixby}. Since the structure of valid inequalities for arbitrary integer programs is too complex to be studied, researchers realized early on the necessity of understanding more restricted programs \cite{gom} (see also \cite{infinite,infinite2,alww,wolseyBook,twoStepMIR,mixingSet,mingling}). We focus on the Gomory--Johnson \emph{infinite group relaxation} \cite{infinite2}: 
                \begin{gather}
                        f + \sum_{r \in \mathbb{R}^k} r s_r \in \mathbb{Z}^k \tag{IR} \label{IR} \\
                        s_r \in \mathbb{Z}_+ \ \ \textrm{for all $r \in \mathbb{R}^k$}  \notag\\
                        s \textrm{ has finite support}. \notag
                \end{gather}
        
        The motivation behind this program is the following. For simplicity of exposition, consider a pure integer program set
        \begin{gather}
                Ay = b \tag{IP} \label{IP} \\
                y \ge 0, \ \ y \in \mathbb{Z}^d.  \notag
        \end{gather}
        Rewriting this set in tableau form with respect to a basis $B$ (i.e., pre-multiplying the system by $B^{-1}$) we obtain the equivalent system
        \begin{gather}
                y_B = \bar{b} - \bar{N} y_N \tag{IP$'$} \label{IPp} \\
                y \ge 0 \notag, \ \ y \in \mathbb{Z}^d,
        \end{gather}
        where $y_B$ are known as the basic variables, $y_N$ are called the non-basic variables, $\bar b$ is called the basic solution corresponding to $B$, and $\bar N$ are known as the non-basic columns. In \cite{gom} Gomory introduced the \emph{corner polyhedron}, which relaxes the non-negativity constraints for the basic variables $y_B$. This relaxation is commonly written as:
                \begin{gather}
                        f + \sum_{j = 1}^n r^j s_j \in \mathbb{Z}^k \tag{CP} \label{CP} \\
                        s \in \mathbb{Z}^n, \ \ s \ge 0.  \notag
                \end{gather}
        The corner polyhedron has been extensively studied in the literature specially in the restricted case $k = 1$ and some interesting results regarding its facial structure are known (see Shim and Johnson \cite{SJ}, for example). Unfortunately, the structure of this integer program still heavily relies on the specific choice of rays $r^1, \ldots, r^n$, making it difficult to analyze. 
        
         The infinite group relaxation \eqref{IR}  reduces the complexity of the system by considering all possible rays $r \in \R^k$. Therefore, \eqref{IR} is completely specified by the choice of $f$. Since this is indeed a relaxation of (IP), valid inequalities for (IR) yield valid inequalities for (IP) by restricting them to coefficients corresponding to $r^1, \ldots, r^n$. 
        
        We now briefly recall important definitions and results regarding the infinite relaxation; see \cite{corner_survey} for a more detailed discussion.
        
        \paragraph{Valid functions.} We start by defining the analog of a cut for the infinite relaxation.  We say that a function $\pi \colon \mathbb{R}^k \rightarrow \mathbb{R}$ is \emph{valid} for \eqref{IR} if $\pi \ge 0$ and the inequality
        \begin{equation}\label{eq:valid-fun}
                \sum_{r \in \mathbb{R}^k} \pi(r) s_r \ge 1
        \end{equation}
        is satisfied by every feasible solution $s$ of (IR). Note that the sum makes sense because $s$ has finite support.
        
        As pointed out in \cite{corner_survey}, the non-negativity assumption in the definition of a valid function might seem artificial at first. Although there might exist functions~$\pi$ satisfying~\eqref{eq:valid-fun} for all feasible~$s$ that take negative values~$\pi(r)$ for some~$r$, any such function must be non-negative over all \emph{rational} vectors. Since data in mixed integer linear programs are usually rational, it is natural to focus on non-negative valid functions.

        \paragraph{Minimal functions, extreme functions and facets.} Although the relaxation (IR) is less complex than the original (IP), the set of valid functions is still too broad to study.  For practical reasons, it is important that we only focus on the stronger inequalities. To make this precise, a valid function $\pi$ is said to be \emph{minimal} if there is no valid function $\pi' \neq \pi$ such that $\pi'(r) \le \pi(r)$ for all $r \in \mathbb{R}^k$. It is intuitively clear that for every valid function there is a minimal one which dominates it. However, we could not find a proof of this statement in the literature, and so we present a proof using Zorn's Lemma in Appendix~\ref{sec:minimality-proof}.

\begin{theorem}\label{thm:minimal1}
Let $\pi$ be a valid function. Then there exists a minimal valid function $\pi'$ such that $\pi' \leq \pi$.
\end{theorem}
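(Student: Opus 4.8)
The plan is to apply Zorn's Lemma (in its ``minimal element'' form) to the set
\[
\mathcal{F} = \{\, \sigma \colon \mathbb{R}^k \rightarrow \mathbb{R} \mid \sigma \text{ is valid and } \sigma \le \pi \,\},
\]
partially ordered by the pointwise order. This set is nonempty because $\pi \in \mathcal{F}$, and any minimal element $\pi'$ of $\mathcal{F}$ is automatically a minimal valid function: if $\rho$ is a valid function with $\rho \le \pi'$, then $\rho \le \pi' \le \pi$, so $\rho \in \mathcal{F}$, and minimality of $\pi'$ in $\mathcal{F}$ forces $\rho = \pi'$. So once the chain condition is verified, the theorem follows.

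The substantive step is checking that every nonempty chain $\mathcal{C} \subseteq \mathcal{F}$ has a lower bound in $\mathcal{F}$. Define $\sigma^*(r) = \inf_{\sigma \in \mathcal{C}} \sigma(r)$ for each $r \in \mathbb{R}^k$. Since $0 \le \sigma \le \pi$ for every $\sigma \in \mathcal{C}$, we get $0 \le \sigma^*(r) \le \pi(r) < \infty$, so $\sigma^*$ is a well-defined nonnegative function with $\sigma^* \le \pi$; it remains only to verify that $\sigma^*$ satisfies~\eqref{eq:valid-fun}. Fix a feasible solution $s$ of~\eqref{IR} with support $\{r_1,\dots,r_n\}$ and fix $\varepsilon > 0$. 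For each $i$ pick $\sigma_i \in \mathcal{C}$ with $\sigma_i(r_i) \le \sigma^*(r_i) + \varepsilon$. Because $\mathcal{C}$ is totally ordered and $\{\sigma_1,\dots,\sigma_n\}$ is a finite set, one of these functions, say $\tilde{\sigma}$, is pointwise $\le$ all the others, and hence $\tilde{\sigma}(r_i) \le \sigma_i(r_i) \le \sigma^*(r_i) + \varepsilon$ for every $i$. Applying validity of $\tilde{\sigma}$,
\[
1 \le \sum_{i=1}^n \tilde{\sigma}(r_i)\, s_{r_i} \le \sum_{i=1}^n \bigl(\sigma^*(r_i) + \varepsilon\bigr) s_{r_i} = \sum_{i=1}^n \sigma^*(r_i)\, s_{r_i} + \varepsilon \sum_{i=1}^n s_{r_i}.
\]
Letting $\varepsilon \to 0$ gives $\sum_{r} \sigma^*(r) s_r \ge 1$, so $\sigma^* \in \mathcal{F}$ is the desired lower bound. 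By Zorn's Lemma, $\mathcal{F}$ has a minimal element $\pi'$, which by the previous paragraph is a minimal valid function with $\pi' \le \pi$.

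The main obstacle is precisely the validity of the pointwise infimum $\sigma^*$: in general the infimum of a sum strictly exceeds the sum of the infima, so one cannot directly transfer validity from the members of $\mathcal{C}$ to $\sigma^*$. The resolution uses two hypotheses essentially: the finite support of $s$ limits the argument to finitely many coordinates, and the total order on $\mathcal{C}$ then lets us replace the finitely many near-optimal functions $\sigma_i$ by a single member $\tilde{\sigma}$ of the chain that simultaneously works (up to $\varepsilon$) at all support points. Everything else — nonemptiness of $\mathcal{F}$, boundedness of the infimum, and the passage from a minimal element of $\mathcal{F}$ to a minimal valid function — is routine.
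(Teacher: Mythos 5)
Your proof is correct and matches the paper's argument in Appendix A.1 essentially step for step: both apply Zorn's Lemma to the set of valid functions dominated by $\pi$, take the pointwise infimum over a chain, and use finite support together with the total order on the chain to transfer validity to the infimum. The only cosmetic difference is that you verify validity of the infimum directly with an $\varepsilon \to 0$ limit, while the paper phrases it as a proof by contradiction.
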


        A function $\pi\colon \mathbb{R}^k \rightarrow \mathbb{R}$ is \emph{periodic with respect to the lattice $\Z^k$} if $\pi(r) = \pi(r + w)$ holds for all $r \in \R^k$ and $w \in \mathbb{Z}^k$. We say that  $\pi$ satisfies the \emph{symmetry condition} if $\pi(r) + \pi(-f - r) = 1$ for all $r \in \mathbb{R}^k$. Finally, $\pi$ is \emph{subadditive} if $\pi(a + b) \le \pi(a) + \pi(b)$ for all $a,b \in \R^k$.

        \begin{theorem}[Gomory and Johnson \cite{infinite}] \label{thm:minimal}
                Let $\pi \colon \mathbb{R}^k \rightarrow \mathbb{R}$ be a non-negative function. Then $\pi$ is a minimal valid function for \eqref{IR} if and only if $\pi(0) = 0$, $\pi$ is periodic with respect to $\Z^k$, subadditive and satisfies the symmetry condition.
        \end{theorem}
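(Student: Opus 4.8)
The plan is to establish the two implications separately; throughout one may assume $f\notin\Z^k$, for if $f\in\Z^k$ then $s=0$ is feasible (so no function is valid) while $\pi(0)=0$ is incompatible with the symmetry condition, so both sides of the stated equivalence are vacuous. In both directions the recurring device is a perturbation argument: to contradict minimality of a valid $\pi$ it suffices to exhibit a valid $\pi'$ with $0\le\pi'\le\pi$ and $\pi'\neq\pi$. I write $e_r$ for the candidate solution with value $1$ at $r$ and $0$ elsewhere.

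\emph{Sufficiency.} Assume $\pi\ge 0$ satisfies $\pi(0)=0$ and is periodic, subadditive and symmetric. For validity I would take a feasible $s$ and apply subadditivity repeatedly to the finite multiset of rays $r$ counted with multiplicity $s_r$, getting $\sum_r\pi(r)s_r\ge\pi\!\left(\sum_r rs_r\right)$; since $\sum_r rs_r\in -f+\Z^k$, periodicity gives $\pi\!\left(\sum_r rs_r\right)=\pi(-f)$, and the symmetry condition at $r=0$ together with $\pi(0)=0$ gives $\pi(-f)=1$. For minimality, let $\pi'\le\pi$ be valid; feasibility of $e_r+e_{-f-r}$ (or of $2e_r$ when $2r\in -f+\Z^k$) gives $\pi'(r)+\pi'(-f-r)\ge 1$, and combined with $\pi'\le\pi$ and $\pi(r)+\pi(-f-r)=1$ this forces $\pi'(r)=\pi(r)$ for all $r$.

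\emph{Necessity.} Let $\pi$ be minimal valid. The first three properties I would obtain one at a time. $\pi(0)=0$: the ray $0$ contributes nothing to $\sum_r rs_r$, so resetting $\pi(0)$ to $0$ preserves validity. Periodicity: if $\pi(r_0+w)<\pi(r_0)$ for some $w\in\Z^k$, then in every feasible $s$ moving all weight on $r_0$ onto $r_0+w$ changes $\sum_r rs_r$ by a multiple of $w$ and so keeps $s$ feasible, hence lowering $\pi(r_0)$ to $\pi(r_0+w)$ stays valid. Subadditivity: if $\pi(a+b)>\pi(a)+\pi(b)$, then replacing each unit of the ray $a+b$ by one unit of $a$ and one of $b$ leaves $\sum_r rs_r$ unchanged, so lowering $\pi(a+b)$ to $\pi(a)+\pi(b)$ stays valid. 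Each of these contradicts minimality unless the property already holds. Likewise, if $\pi(r_1)>1$ then replacing $\pi$ by $1$ on the coset $r_1+\Z^k$ stays valid (any feasible $s$ has a ray in its support), so $\pi\le 1$ everywhere; and feasibility of $e_{-f}$ gives $\pi(-f)\ge 1$, so $\pi(-f)=1$.

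\emph{Symmetry, the substantive step.} One direction, $\pi(r)+\pi(-f-r)\ge 1$, is again feasibility of $e_r+e_{-f-r}$. For the reverse, suppose $\pi(\bar r)+\pi(-f-\bar r)>1$; one may assume $\pi(\bar r)>0$, since $\pi(\bar r)=0$ would force (via subadditivity and $\pi\le 1$) $\pi(-f-\bar r)=1$ and the relation holds there. I would lower $\pi$ on the coset $\bar r+\Z^k$. By subadditivity, the part of a feasible solution not supported on this coset, when the coset is used $k$ times, has ray-sum $\equiv -f-k\bar r\pmod{\Z^k}$ and hence $\pi$-cost at least $\pi(-f-k\bar r)$; so lowering $\pi$ on the coset by any $\epsilon\in(0,\,\inf_{k\ge 1}\tau_k/k]$ keeps it valid and strictly below $\pi$, where $\tau_k:=\pi(-f-k\bar r)+k\pi(\bar r)-1$. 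Each $\tau_k\ge 0$ (from the $\ge$-inequality at $k\bar r$ and $\pi(k\bar r)\le k\pi(\bar r)$), and $\tau_k/k\ge\pi(\bar r)-1/k$, so the infimum is attained at a finite $K^*$; minimality forces it to equal $0$, and $K^*\neq 1$ because $\tau_1=\pi(\bar r)+\pi(-f-\bar r)-1>0$. From $\tau_{K^*}=0$ one extracts $\pi(-f-K^*\bar r)=1-K^*\pi(\bar r)$ and, by sandwiching with subadditivity, $\pi(j\bar r)=j\pi(\bar r)$ for $1\le j\le K^*$, so that
\[
\pi(-f-\bar r)\le\pi(-f-K^*\bar r)+\pi\!\left((K^*-1)\bar r\right)=\bigl(1-K^*\pi(\bar r)\bigr)+(K^*-1)\pi(\bar r)=1-\pi(\bar r),
\]
contradicting $\pi(\bar r)+\pi(-f-\bar r)>1$. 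The hard part is precisely this last argument: unlike the other properties, no single "local" perturbation contradicts minimality, and one must route the perturbation bound through a subadditivity identity that only becomes available once $\inf_k\tau_k/k$ is known to be attained.
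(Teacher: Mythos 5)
The paper does not prove Theorem~\ref{thm:minimal}; it is cited to Gomory and Johnson \cite{infinite}, so there is no in-paper proof to compare against. Taking your proposal on its own merits: it is correct. The sufficiency direction and the ``easy'' parts of necessity ($\pi(0)=0$, periodicity, subadditivity, $\pi\le 1$, $\pi(-f)=1$) are the standard swap/lowering perturbations and you have them right. The substantive and delicate step, as you correctly flag, is the inequality $\pi(\bar r)+\pi(-f-\bar r)\le 1$, where no single-point lowering works because $\bar r$ can be used with arbitrary multiplicity. Your device --- lowering $\pi$ uniformly on the coset $\bar r+\Z^k$, bounding the permissible decrement by $\inf_{k\ge 1}\tau_k/k$ with $\tau_k=\pi(-f-k\bar r)+k\pi(\bar r)-1$, using $\tau_k/k\ge\pi(\bar r)-1/k$ together with $\pi(\bar r)>0$ to force the infimum to be attained, then invoking minimality to get a $K^*\ge 2$ with $\tau_{K^*}=0$, and finally routing through the sandwich $\pi(j\bar r)=j\pi(\bar r)$ for $1\le j\le K^*$ to reach $\pi(-f-\bar r)\le 1-\pi(\bar r)$ --- is a clean, self-contained argument and all the steps check out. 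Two tiny points of hygiene: the validity of the coset-lowering also needs $\pi'\ge 0$, which you implicitly have since $\inf_k\tau_k/k\le\tau_1\le\pi(\bar r)$, but it is worth stating; and the reduction to $\pi(\bar r)>0$ is most simply justified by noting that $\pi(\bar r)=0$ and $\pi\le 1$ already force $\pi(\bar r)+\pi(-f-\bar r)\le 1$, so there is nothing to prove there (your detour through $\pi(-f-\bar r)=1$ is unnecessary).
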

        
        Although minimality reduces the number of relevant valid functions that we need to study, it still leaves too many under consideration. Inspired by the importance of facets in the finite-dimensional setting, we consider the analogous concepts in this setting. A~valid function~$\pi$ is \emph{extreme} if it cannot be written as a convex combination of two other valid functions, i.e., $\pi = \frac{1}{2}\pi_1 + \frac{1}{2}\pi_2$ implies $\pi = \pi_1 = \pi_2$. For any valid function~$\pi$, let $S(\pi)$ denote the set of all $s$ satisfying~\eqref{IR} such that $\sum_{r\in \R^k}\pi(r)s_r = 1$. It is easy to verify that extreme functions are minimal. A~valid function~$\pi$ is a \emph{facet} if for every valid function~$\pi'$, we have that $S(\pi) \subseteq S(\pi')$ implies $\pi' = \pi$. This concept was introduced by Gomory and Johnson in~\cite{tspace} and we prove below that if $\pi$ is a facet, then it is extreme. Thus, facets can be seen as the strongest valid functions. 
        
\begin{lemma} If $\pi$ is a facet, then $\pi$ is extreme.
\end{lemma}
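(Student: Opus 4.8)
The plan is to reduce the statement directly to the defining property of a facet. Suppose $\pi$ is a facet and that $\pi = \tfrac12\pi_1 + \tfrac12\pi_2$ for some valid functions $\pi_1,\pi_2$. To conclude that $\pi$ is extreme it suffices to show $S(\pi)\subseteq S(\pi_1)$ and $S(\pi)\subseteq S(\pi_2)$, since then the facet hypothesis, applied once to the valid function $\pi_1$ and once to $\pi_2$, immediately gives $\pi_1 = \pi$ and $\pi_2 = \pi$, which is precisely what extremality requires.

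The key step is a pointwise observation about the sums in~\eqref{eq:valid-fun}. Fix any feasible solution $s$ of~\eqref{IR} lying in $S(\pi)$. Since $s$ has finite support, all the sums below are finite sums, so the splitting
\[
1 \;=\; \sum_{r\in\R^k}\pi(r)s_r \;=\; \tfrac12\sum_{r\in\R^k}\pi_1(r)s_r \;+\; \tfrac12\sum_{r\in\R^k}\pi_2(r)s_r
\]
is valid term by term. On the other hand, validity of $\pi_1$ and $\pi_2$ gives $\sum_{r}\pi_1(r)s_r \ge 1$ and $\sum_{r}\pi_2(r)s_r \ge 1$. Two real numbers that are each at least $1$ and whose average equals $1$ must both equal $1$; hence $\sum_{r}\pi_1(r)s_r = \sum_{r}\pi_2(r)s_r = 1$, i.e.\ $s\in S(\pi_1)$ and $s\in S(\pi_2)$. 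As $s\in S(\pi)$ was arbitrary, this yields the desired inclusions $S(\pi)\subseteq S(\pi_1)$ and $S(\pi)\subseteq S(\pi_2)$, completing the argument.

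There is no genuine obstacle in this proof; it is a short, direct unpacking of the definitions of \emph{valid function}, of $S(\cdot)$, and of \emph{facet}. The only two points that deserve a moment's care are that the finite-support hypothesis on $s$ makes every sum a legitimate finite sum (so the convex combination passes inside the summation), and the elementary fact that equality in the average of two quantities each bounded below by $1$ forces equality in each bound.
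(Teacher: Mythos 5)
Your proof is correct and follows essentially the same route as the paper: fix $s\in S(\pi)$, split the sum using the convex combination, use validity of $\pi_1,\pi_2$ to force each summand to equal $1$, conclude $S(\pi)\subseteq S(\pi_i)$, and invoke the facet property. The only difference is cosmetic (you spell out the "average of two numbers each $\ge 1$" observation rather than writing the chain of inequalities), so there is nothing substantive to add.
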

\begin{proof}
Suppose $\pi$ is a facet and let $\pi = \frac{1}{2} \pi_1 + \frac{1}{2}\pi_2$. We observe that $S(\pi) \subseteq S(\pi_1)$ and $S(\pi) \subseteq S(\pi_2)$. Let $s \in S(\pi)$. Then 
$$
1 = \sum _{r \in \R^k}\pi(r)s_r = \frac{1}{2}\sum _{r \in \R^k}\pi_1(r)s_r +\frac{1}{2}\sum _{r \in \R^k}\pi_2(r)s_r \geq \frac{1}{2} + \frac{1}{2} = 1, 
$$
so equality must hold throughout and in particular, $\sum _{r \in \R^k}\pi_i(r)s_r = 1$ for both $i=1,2$. Therefore $s\in S(\pi_i)$ for both $i=1,2$. Since $\pi$ is a facet, by definition this implies $\pi = \pi_1 = \pi_2$.
\end{proof}

        In general, constructing or even proving that a valid function is a facet or extreme can be a very difficult task. Arguably the deepest result on the infinite relaxation is a sufficient condition for facetness in the restricted setting $k = 1$, the so-called 2-Slope Theorem of Gomory and Johnson \cite{infinite2, tspace}. 
        
        \begin{theorem}[Gomory--Johnson 2-Slope Theorem]
        \label{thm:2-slope}
                Let $\pi \colon \mathbb{R} \rightarrow \mathbb{R}$ be a minimal valid function. If $\pi$ is a continuous piecewise linear function with only two slopes, then $\pi$ is a facet (and hence extreme).
        \end{theorem}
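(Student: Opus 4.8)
The plan is to prove the stronger assertion that $\pi$ is a facet, from which extremality follows by the preceding lemma. So let $\pi'$ be any valid function with $S(\pi)\subseteq S(\pi')$, and aim to show $\pi'=\pi$. Two easy reductions come first. Since $\pi$ is minimal it is symmetric, so for every ray $r$ the feasible solution putting one unit of weight on $r$ and one on $-f-r$ attains $\pi(r)+\pi(-f-r)=1$ and hence lies in $S(\pi)$; thus every ray lies in the support of some element of $S(\pi)$, and it is enough to prove $\pi'\ge\pi$ pointwise, for then each $s\in S(\pi)\subseteq S(\pi')$ gives $\sum_\rho(\pi'(\rho)-\pi(\rho))s_\rho=0$ with $\pi'-\pi\ge0$, forcing $\pi'=\pi$ on $\mathrm{supp}(s)$, and these supports exhaust $\R$. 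To obtain $\pi'\ge\pi$, use Theorem~\ref{thm:minimal1} to pick a minimal valid $\pi''\le\pi'$; then $S(\pi)\subseteq S(\pi')\subseteq S(\pi'')$ (if $\sum\pi's=1$ then $1\le\sum\pi''s\le\sum\pi's=1$), so it suffices to prove $\pi''=\pi$, which yields $\pi=\pi''\le\pi'$. Henceforth $\pi''$ is minimal, $S(\pi)\subseteq S(\pi'')$, and the target is $\pi''=\pi$.

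Write $E(\varphi)=\{(a,b):\varphi(a+b)=\varphi(a)+\varphi(b)\}$ for a subadditive $\varphi$. For $(a,b)\in E(\pi)$ the feasible solution placing one unit of weight on each of $a$, $b$, $-f-a-b$ attains $\pi(a)+\pi(b)+\pi(-f-a-b)=\pi(a+b)+\pi(-f-a-b)=1$, so it lies in $S(\pi)\subseteq S(\pi'')$; subtracting the symmetry relation $\pi''(a+b)+\pi''(-f-a-b)=1$ of $\pi''$ yields $(a,b)\in E(\pi'')$. Hence $E(\pi)\subseteq E(\pi'')$, and the task is to extract enough from $E(\pi)$, using two-slopedness, to pin $\pi''$ down. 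Because $\pi$ is continuous piecewise linear, $\Delta\pi(a,b):=\pi(a)+\pi(b)-\pi(a+b)$ is continuous, piecewise linear, and nonnegative. Let $m(t)\in\{s^+,s^-\}$ denote the slope of $\pi$ at $t$ (defined away from breakpoints), where $s^+>0>s^-$ are the two slopes, necessarily of opposite sign since $\int_0^1 m=\pi(1)-\pi(0)=0$ and $\pi\not\equiv0$. Using $\pi(x)=\int_0^x m$, legitimate for all $x$ by $\pi(0)=0$ and $\Z$-periodicity, one computes
\[
  \Delta\pi(a,b)=(s^+-s^-)\int_0^b\Big(\mathbf 1[m(t)=s^+,\ m(t+a)=s^-]-\mathbf 1[m(t)=s^-,\ m(t+a)=s^+]\Big)\,dt .
\]
In particular, if $[0,q]$ is the first interval on which $m\equiv s^+$ (which exists since $\pi(0)=0=\min\pi$), then $\Delta\pi$ vanishes on the full-dimensional triangle $\{a,b\ge0,\ a+b\le q\}$, and further two-dimensional cells of $E(\pi)$ arise in the same way.

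Now feed these two-dimensional cells into the classical \emph{Interval Lemma}: if $g$ is bounded and $g(u)+g(v)=g(u+v)$ for all $u$ in a nondegenerate interval $U$ and all $v$ in a nondegenerate interval $V$, then $g$ is affine on $U$, on $V$, and on $U+V$, with one common slope. Since $E(\pi)\subseteq E(\pi'')$, this applies to $g=\pi''$ on every such cell, and the additivity relation $\pi''(u)+\pi''(v)=\pi''(u+v)$ then determines the affine pieces of $\pi''$ not merely up to slope but, by matching their values, up to their constant terms. Starting from the triangle at the origin, where $\pi''(0)=0$ forces $\pi''(x)=\sigma^+x$ on $[0,q]$, one chains the cells of $E(\pi)$ (using the new affine pieces the lemma produces on the ``$U+V$'' side, together with the symmetry relation $\pi''(r)+\pi''(-f-r)=1$) across a whole period, concluding that $\pi''$ is continuous piecewise linear with the breakpoints of $\pi$ and agrees with $\pi$ at each of them; hence $\pi''=\pi$, and the theorem follows.

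The hard part is precisely this propagation: one must show that the two-dimensional additivity cells of $\pi$ — the triangle at the origin, plus those obtained by iterating the Interval Lemma and the symmetry condition — are numerous and well-connected enough to transport the values of $\pi''$ from one affine piece to the next over the entire period, for an arbitrary two-slope breakpoint pattern. This is a genuine combinatorial argument about the additivity diagram of a two-slope function, and it is where the symmetry of $\pi''$ (which links the ascending and descending intervals) is essential. A secondary issue is that continuity of $\pi''$ at each breakpoint is not delivered directly by the cellwise Interval Lemma and must be recovered from overlapping cells or from a limiting argument using subadditivity of $\pi''$.
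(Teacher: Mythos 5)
Your setup is correct and mirrors the paper's Facet Theorem reduction: pass to a minimal $\pi''\le\pi'$ with $S(\pi)\subseteq S(\pi'')$, derive $E(\pi)\subseteq E(\pi'')$ from symmetry, and feed two-dimensional additivity cells of $\pi$ into the Interval Lemma. The integral formula for $\Delta\pi(a,b)$ and the identification of the triangle $\{a,b\ge0,\,a+b\le q\}$ as an additivity cell are also correct. But the proof stops exactly where the theorem's content begins. You acknowledge this yourself: ``the hard part is precisely this propagation,'' and you leave it as an unproved claim that the additivity cells of $\pi$ are ``numerous and well-connected enough'' to transport the values of $\pi''$ across a full period for an \emph{arbitrary} two-slope breakpoint pattern. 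That is the actual theorem; the preparatory reductions you do supply are routine and would appear in any treatment of facetness.

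It is worth noting that the paper (via Theorem~\ref{thm:main} specialized to $k=1$) sidesteps the global chaining you envision, and this may be why that chaining is hard to pin down. Rather than propagating \emph{values} of $\pi''$ from one cell to the next across breakpoints, the paper propagates \emph{slopes} locally: it fixes a small interval $\Pi$ at the origin inside a cell $P_0\in\P_i$ and observes that for any $x$ in any other cell $P\in\P_i$ \emph{of the same slope} and any small $s\in\varepsilon\Pi$, the pair $(x,s)$ already lies in $E(\pi)$ because both $P$ and $P_0$ have gradient $\gp^i$ — no crossing of breakpoints is needed (Lemmas~\ref{lemma:simplex_S}--\ref{lemma:shifted-affine}, Proposition~\ref{prop:theta-compatible}). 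This makes $\pi''$ piecewise linear compatible with $\P$, with two unknown slopes $\gt^1,\gt^2$. Those slopes are then pinned down not by chaining additivity cells but by a $2\times2$ nonsingular linear system coming from $\pi''(w-f)=1$ for $w\in\Z$ together with $\pi''(0)=0$ (Corollary~\ref{lemma:linear-system}, Proposition~\ref{prop:gradients-equal}). Your sketch, if completed, would recover the original Gomory--Johnson argument; but as written, the combinatorial step on which everything turns is missing, and the secondary issue you raise (continuity of $\pi''$ at breakpoints) is handled in the paper up front by Lemma~\ref{lemma:theta-lipschitz} rather than recovered afterward.
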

        
        In addition to its theoretical appeal, this result also has practical relevance. It supplies theoretical indication about the intrinsic power of 2-slope functions, which are very effective cuts in integer programming solvers \cite{Bixby} (e.g., GMI's). This surprising result was already known in the 1970s, and despite the increased efforts in understanding relaxations for \eqref{IP} with $k > 1$, a generalization of this result was obtained only recently for the case $k = 2$     by Cornu\'ejols and Molinaro \cite{3slope}.
        
        \begin{theorem}[3-Slope Theorem]\label{thm:3-slope}
                Let $\pi \colon \mathbb{R}^2 \rightarrow \mathbb{R}$ be a minimal valid function. If $\pi$ is a continuous piecewise linear function with only 3~slopes and with 3~directions, then $\pi$ is extreme. 
        \end{theorem}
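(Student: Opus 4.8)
The plan is to follow the standard template for slope theorems: assuming $\pi=\tfrac12(\pi_1+\pi_2)$ for valid functions $\pi_1,\pi_2$, show that the rigid piecewise-linear structure of $\pi$ is inherited by $\pi_1$ and $\pi_2$ and forces $\pi_1=\pi_2=\pi$. The first step is to reduce to the case in which $\pi_1,\pi_2$ are themselves minimal: if, say, $\pi_1$ were not minimal, Theorem~\ref{thm:minimal1} would supply a valid $\pi_1'$ with $\pi_1'\le\pi_1$ and $\pi_1'\neq\pi_1$, and then $\tfrac12(\pi_1'+\pi_2)$ would be a valid function that is pointwise $\le\pi$ but not equal to $\pi$, contradicting minimality of $\pi$. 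So by Theorem~\ref{thm:minimal} each $\pi_i$ is non-negative, $\Z^2$-periodic, subadditive, satisfies $\pi_i(0)=0$ and the symmetry condition, and consequently $0\le\pi_i\le1$, so $\pi_1,\pi_2$ are bounded. Recall that by hypothesis $\pi$ is continuous, linear on the cells of a $\Z^2$-periodic triangulation $\P$ whose edges point in only three directions, and takes exactly three distinct gradient values $\nabla_1,\nabla_2,\nabla_3$.

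The engine is additivity transfer. For a subadditive $g$ put $E(g)=\{(x,y)\in\R^2\times\R^2: g(x+y)=g(x)+g(y)\}$. The identity $\pi(x)+\pi(y)-\pi(x+y)=\tfrac12\sum_{i=1,2}\bigl(\pi_i(x)+\pi_i(y)-\pi_i(x+y)\bigr)$ writes a non-negative quantity as a sum of two non-negative quantities, whence $E(\pi)\subseteq E(\pi_1)\cap E(\pi_2)$. Now $E(\pi)$ is rich: it contains every product $U\times V$ of two-dimensional boxes on which $\pi$ is given by a single affine form over $U$, over $V$, and over $U+V$; it contains the affine set $\{(x,-f-x):x\in\R^2\}$ coming from the symmetry condition; and it contains all pairs $(x,w)$ with $w\in\Z^2$. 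To this I would apply a two-dimensional Gomory--Johnson interval lemma: if $g$ is bounded and $g(u+v)=g(u)+g(v)$ for $u$ in a box $U$ and $v$ in a box $V$, then $g$ is affine, with one common gradient, on $U$, on $V$, and on $U+V$. Applying this to the bounded functions $\pi_1,\pi_2$ over a family of additive blocks that together reach every cell --- using the ``$U+V$'' half of the conclusion to cross cell boundaries, and using that the three edge directions of $\P$ make the blocks overlap all across the complex --- one concludes that $\pi_1$ and $\pi_2$ are continuous and affine on each cell of $\P$, with the gradient assigned to a cell depending only on which of $\nabla_1,\nabla_2,\nabla_3$ that cell carries. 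Thus $\pi_i$ is piecewise linear over $\P$ with gradients $\mu_1^{(i)},\mu_2^{(i)},\mu_3^{(i)}$, and $\nabla_j=\tfrac12(\mu_j^{(1)}+\mu_j^{(2)})$.

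It remains to show $\mu_j^{(i)}=\nabla_j$. First, the three gradients $\nabla_1,\nabla_2,\nabla_3$ are affinely independent; this is where the three-direction hypothesis is used, for otherwise the $\Z^2$-periodicity and boundedness of $\pi$ would force $\pi$ to be a function of a single linear functional. Next, continuity of $\pi_i$ across an edge of direction $d$ forces its gradient jump there to be orthogonal to $d$, hence a scalar multiple of the jump $\nabla_j-\nabla_{j'}$ of $\pi$ across that same edge; writing the three such jumps as $\mu_j^{(i)}-\mu_{j'}^{(i)}=\lambda_{jj'}^{(i)}(\nabla_j-\nabla_{j'})$ and combining them, affine independence of the $\nabla_j$ forces a single scalar $\lambda^{(i)}$ with $\mu_j^{(i)}=\lambda^{(i)}\nabla_j+c^{(i)}$ for a common vector $c^{(i)}$. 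Then the function $x\mapsto\pi_i(x)-\lambda^{(i)}\pi(x)-c^{(i)}\cdot x$ has vanishing gradient on every cell and is continuous, hence constant, hence zero (evaluate at $0$), so $\pi_i(x)=\lambda^{(i)}\pi(x)+c^{(i)}\cdot x$. The $\Z^2$-periodicity of $\pi_i$ and of $\pi$ then gives $c^{(i)}\cdot e_1=c^{(i)}\cdot e_2=0$, so $c^{(i)}=0$ and $\pi_i=\lambda^{(i)}\pi$. Finally the symmetry condition gives $\pi_i(-f)=1=\pi(-f)$, so $\lambda^{(i)}=1$; hence $\pi_1=\pi_2=\pi$ and $\pi$ is extreme.

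The main obstacle is the second paragraph: promoting finitely many additive blocks to the statement that $\pi_i$ is continuous and affine on every cell, with gradient dictated by the slope class. This demands a genuinely two-dimensional interval lemma, and with it a precise description of when a full-dimensional product $U\times V$ actually lies in $E(\pi)$ (one finds this forces $\pi$ to agree with a single affine form over $U$, $V$ and $U+V$, so such blocks live inside a single slope class); a propagation and connectivity argument showing that the affine pieces obtained on different cells must match up and cover the whole torus, both within each slope class and between classes, for which the three-direction structure of $\P$ is essential --- alternatively one may restrict $\pi$ to lines in each of the three directions and invoke one-dimensional arguments; and the exclusion of the discontinuities that minimal valid functions can exhibit in general, here ruled out by the overlaps coming from the ``$U+V$'' part of the interval lemma together with continuity of $\pi_1+\pi_2=2\pi$. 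Once this structural statement is in hand, the final paragraph is little more than linear algebra.
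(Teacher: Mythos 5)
Your overall blueprint matches the paper's (which is itself modeled on the Cornu\'ejols--Molinaro argument): transfer additivity from $\pi$ to the two summands, use the Interval Lemma and a propagation argument to show the summands are piecewise linear with the same cell structure, then pin down the gradients. The reduction ``$\pi=\tfrac12(\pi_1+\pi_2)$ with $\pi_1,\pi_2$ minimal'' plays the same role as the paper's Facet Theorem plus the hypothesis $E(\pi)\subseteq E(\pi')$, and your additivity-transfer identity is exactly the argument in Lemma~\ref{lemma:shifted-affine} and Proposition~\ref{prop:theta-compatible}, specialized to two functions. You correctly flag the propagation step as the main hurdle; that is precisely what Section~\ref{sec:compatible} (Lemmas~\ref{lemma:simplex_S}--\ref{lemma:shifted-affine}) is for.

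Your third paragraph, however, has a gap you do not flag, and it is exactly where the paper's KKM argument does its work. You write the three gradient-jump relations $\mu_j^{(i)}-\mu_{j'}^{(i)}=\lambda_{jj'}^{(i)}(\nabla_j-\nabla_{j'})$ and then use affine independence of $\nabla_1,\nabla_2,\nabla_3$ to force a common scalar $\lambda^{(i)}$. This step needs a jump relation \emph{for every unordered pair} $\{j,j'\}$, which requires an actual edge of the complex shared by cells carrying gradients $\nabla_j$ and $\nabla_{j'}$. Nothing in the hypotheses immediately guarantees that all three pairs are so realized; a priori, two of the three edge directions could both separate the same pair of slope classes, in which case one of the three relations is missing and the system $\{\lambda_{jj'}^{(i)}\}$ cannot be collapsed to a single $\lambda^{(i)}$. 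Lemma~\ref{lemma:kkmDirections} is designed exactly to produce, via the Knaster--Kuratowski--Mazurkiewicz covering lemma applied to the fan near the origin, a direction $r^i$ lying simultaneously in the (closures of the) regions of both gradients $\gp^j$, $j\neq i$, yielding the relation $r^i\cdot(\gp^j-\gp^{\ell})=0$ for all three pairs, plus the stronger fact $\cone(r^i)_{i=1}^{k+1}=\R^k$ used in the rank argument of Proposition~\ref{prop:gradients-equal}. Your affine-independence-plus-periodicity finish (forcing $c^{(i)}=0$, then $\lambda^{(i)}=1$) is a clean alternative to the paper's explicit linear-system rank computation and would be an attractive simplification in the $k=2$ case, but it only goes through once the existence of the three cross-slope edges (or the paper's substitute, the three KKM directions) is actually established. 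A secondary, smaller issue: your justification that $\nabla_1,\nabla_2,\nabla_3$ are affinely independent is stated but not argued; the paper derives this from Lemma~\ref{lem:gi-cone} (the gradients positively span $\R^k$, hence are not collinear), which in turn uses genuine $k$-dimensionality obtained from the three-direction hypothesis via Appendix~\ref{sec:generalizes}.
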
   
        Here, the \emph{directions} of the function refer to the direction of the edges bounding the cells of the piecewise linear function.  Assuming 3~directions ensures that we do not include 3-slope functions that are constant in some direction. See Figure \ref{fig:partition} for an illustration.
The authors show in~\cite{3slope} that this assumption cannot be removed.

Theorems~\ref{thm:2-slope} and \ref{thm:3-slope} contribute a simple sufficient condition for extremality that avoids the repetition of (often long) arguments for specific families of cuts. For instance, the valid functions that are proved to be extreme in \cite{deyRichard} can all be shown to satisfy the hypotheses of Theorem~\ref{thm:3-slope}; hence Theorem~\ref{thm:3-slope} is a powerful unification of all those arguments. Our goal in this paper is to prove such a theorem for general $k$.

\begin{figure}
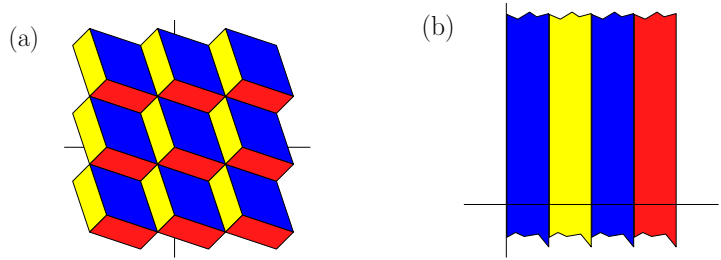

\label{fig:partition}
\begin{center}
$$
\begin{array}{rcl}
\inputfigepsSmall{partition1}&\ \ \ \ \ \ &
\inputfigeps{partition2}
\end{array}
$$
\end{center}
\caption{The assumptions of the Cornu\'ejols--Molinaro 3-slope theorem (Theorem \ref{thm:3-slope}).  (a)  A function with 3 slopes and 3 directions, where each color represents the cells where the function has a certain slope.  (b) A function with 3 slopes and only 
1~direction.
}
\end{figure}

        \paragraph{Our results.} We generalize the above results and present a sufficient condition for facetness (and therefore for extremality) of valid functions for arbitrary dimension $k$.  For this, we  generalize the notion of a function having 3 directions.

\begin{definition}
A function $\theta\colon \R^k \rightarrow \R$ is {\em genuinely $k$-dimensional}
if there does not exist a function $\varphi \colon \R^{k-1} \rightarrow \R$ and a linear
map $T \colon \R^k \rightarrow \R^{k-1}$ such that $\theta = \varphi\circ T$. 
\end{definition}

In the context of piecewise linear functions~$\pi\colon\R^2\to\R$, the above definition is related to the concept of functions with 3 directions as used in \cite{3slope}. Indeed, Appendix~\ref{sec:generalizes} shows that the assumption of being genuinely $2$-dimensional is a weaker assumption; namely, if a function has 3~directions, then it is genuinely $2$-dimensional.

\begin{theorem}\label{thm:main}
Let $\pi\colon \R^k \to \R$ be a minimal valid function that is piecewise linear with a locally
finite cell complex and genuinely $k$-dimensional with at most $k+1$ slopes. Then $\pi$ is a facet (and therefore extreme) and has exactly $k+1$ slopes.
\end{theorem}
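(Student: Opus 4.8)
The plan is to follow the Gomory--Johnson paradigm: show that $\pi$ cannot be split by proving that every competing \emph{minimal} valid function whose tight set contains $S(\pi)$ must equal $\pi$, using a transfer of additivity relations followed by a multidimensional Interval Lemma. First I would record the reduction. It suffices to prove that for every minimal valid $\pi'$ with $S(\pi)\subseteq S(\pi')$ we have $\pi'=\pi$: given an arbitrary valid $\pi'$ with $S(\pi)\subseteq S(\pi')$, Theorem~\ref{thm:minimal1} yields a minimal $\pi''\le\pi'$ with $S(\pi)\subseteq S(\pi')\subseteq S(\pi'')$, so $\pi''=\pi$ gives $\pi\le\pi'$; and since for each $r$ the feasible solution supported on $\{r,-f-r\}$ lies in $S(\pi)$ (by the symmetry condition) and is therefore tight for $\pi'$ too, $\pi'$ and $\pi$ agree pointwise. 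The engine is the transfer principle, valid when $\pi'$ is minimal: $S(\pi)\subseteq S(\pi')$ implies $E(\pi)\subseteq E(\pi')$, where $E(\pi)=\{(x,y):\pi(x)+\pi(y)=\pi(x+y)\}$. Indeed, for $(x,y)\in E(\pi)$ put $z=-f-x-y$; then $e_x+e_y+e_z\in S(\pi)\subseteq S(\pi')$ (with $e_r$ the unit solution at $r$), so $\pi'(x)+\pi'(y)+\pi'(z)=1$, while symmetry of $\pi'$ gives $\pi'(x+y)+\pi'(z)=1$, and subadditivity of $\pi'$ forces $\pi'(x)+\pi'(y)=\pi'(x+y)$.

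Next I would analyse the slopes. By periodicity and local finiteness, $\R^k/\Z^k$ is tiled by finitely many full-dimensional polytopes on which $\pi$ is affine, with gradients $g_1,\dots,g_m$ (the slopes), $m\le k+1$. If all $g_i$ lay on an affine hyperplane $\{g:\langle a,g\rangle=\beta\}$, the directional derivative of $\pi$ along $a$ would be the constant $\beta$, whence $\pi(x+ta)=\pi(x)+t\beta$; since $\pi$ is bounded ($0\le\pi\le1$ by the symmetry condition) this forces $\beta=0$, so every $g_i\perp a$, so $\pi$ is constant along $a$, giving $\pi=\varphi\circ\proj_{a^\perp}$ and contradicting genuine $k$-dimensionality. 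Thus no affine hyperplane contains all the $g_i$, which with $m\le k+1$ forces $m=k+1$ and affine independence of $g_1,\dots,g_{k+1}$ --- already the ``exactly $k+1$ slopes'' assertion. I would then describe $E(\pi)$ as a union of faces of the common refinement of the products $P_i\times P_j$ with the preimages of the cells under $(x,y)\mapsto x+y$; on each such face $\Delta\pi(x,y):=\pi(x)+\pi(y)-\pi(x+y)$ is affine, nonnegative, with gradient $(g_i-g_\ell,\,g_j-g_\ell)$ where $P_i,P_j,P_\ell$ contain $x,y,x+y$. Genuinely full-dimensional additive cells are available: the full-dimensional cell containing $0$ is locally a cone $C$ with $C+C\subseteq C$, so a box $U\times V$ with $U,V\subseteq C$ lies in $E(\pi)$; the symmetry diagonal $\{(x,-f-x)\}$ lies in $E(\pi)$; and $E(\pi)$ is closed under the implication $(x,y),(x+y,z)\in E(\pi)\Rightarrow(y,z),(x,y+z)\in E(\pi)$ coming from subadditivity.

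The last ingredient is a $k$-dimensional Interval Lemma: if a bounded $h\colon\R^k\to\R$ satisfies $h(u)+h(v)=h(u+v)$ for all $u\in U$, $v\in V$ with $U,V$ full-dimensional convex, then $h$ is affine, with one common gradient, on $U$, on $V$, and on $U+V$ (by slicing into lines, invoking the classical one-dimensional Gomory--Johnson lemma, and gluing along the connected sets $U,V,U+V$). Applying this to $\pi'$ (which is bounded for the same reason as $\pi$) over the full-dimensional additive cells of $E(\pi)\subseteq E(\pi')$, and propagating from cell to neighbouring cell across the connected complex --- using the affine independence of the $g_i$ and the periodicity (``slope balance'') relations to reach every cell and to identify the gradient of $\pi'$ on $P_i$ with $g_i$ --- shows that $\pi'$ is affine on a neighbourhood of each cell, hence continuous, with the same gradient as $\pi$. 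Since $\pi'(0)=\pi(0)=0$, we conclude $\pi'=\pi$, so $\pi$ is a facet; the ``exactly $k+1$ slopes'' conclusion was obtained above.

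The main obstacle is precisely this propagation step: extracting from $E(\pi)$ enough genuinely full-dimensional additive cells --- not merely the thin codimension-one additive faces that generically separate cells of different slope, which the Interval Lemma cannot exploit --- and chaining them to cover the whole complex while pinning the gradients of $\pi'$ correctly. This is where genuine $k$-dimensionality is indispensable, the statement being false without it: functions lifted from $\R^{k-1}$ furnish minimal, non-extreme examples with $k+1$ slopes. A secondary technicality is carrying out the boundedness/symmetry argument so as not to hypothesise continuity of $\pi'$ in advance.
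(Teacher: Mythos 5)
Your reduction to minimal $\pi'$ via Theorem~\ref{thm:minimal1}, the transfer principle $S(\pi)\subseteq S(\pi')\Rightarrow E(\pi)\subseteq E(\pi')$ (the paper's Theorem~\ref{thm:facet-theorem}), and the use of the Interval Lemma on full-dimensional additive cells all match the paper's structure. Your argument that the gradients cannot lie in a common affine hyperplane (directional derivative is a nonzero constant $\Rightarrow$ unbounded) is a clean alternative to the paper's Lemma~\ref{lem:gi-cone} for forcing exactly $k+1$ slopes; note, however, that it only gives affine independence of $\gp^1,\dots,\gp^{k+1}$, whereas the paper proves the stronger fact $\cone(\gp^i)_{i=1}^{k+1}=\R^k$, which it genuinely needs later (it implies, via Lemma~\ref{lem:lin-indep}, that every $k$-subset of gradients is linearly independent, and hence that the cones $C_j$ used in the KKM argument are full-dimensional).

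The genuine gap is exactly where you flag it: the propagation step that ``identif[ies] the gradient of $\pi'$ on $P_i$ with $g_i$.'' Propagation from patch to patch using $E(\pi)\subseteq E(\pi')$, as in the paper's Lemmas~\ref{lemma:simplex_S}--\ref{lemma:shifted-affine}, only shows that $\pi'$ is piecewise linear \emph{compatible} with the partition $\{\P_i\}$, i.e.\ $\pi'$ has some gradient $\gt^i$ on all cells in $\P_i$ (Proposition~\ref{prop:theta-compatible}). Nothing in this step forces $\gt^i=\gp^i$; the same additivity relations would be satisfied by an a priori different gradient set. ``Affine independence of the $g_i$ and slope balance'' do not close this: periodicity plus $\pi'(w-f)=1$ gives only $k+1$ scalar equations (Lemma~\ref{lemma:integration}) for $k(k+1)$ unknown gradient coordinates, far short of uniqueness. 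The paper supplies the missing $k(k+1)$ constraints by locating, via the Knaster--Kuratowski--Mazurkiewicz Lemma applied to a simplex built from the cones $C_j=\bigcap_{i\neq j}\{r:\gp^i\cdot r\le0\}$ around the origin, special directions $r^1,\dots,r^{k+1}$ each of which lies simultaneously in $k$ of the slope regions $\P_j$, $j\neq i$ (Lemma~\ref{lemma:kkmDirections}); these yield the relations $r^i\cdot g^j=r^i\cdot g^\ell$ for $j,\ell\neq i$, satisfied by both $\{\gp^i\}$ and $\{\gt^i\}$, and the resulting square system has full rank (Proposition~\ref{prop:gradients-equal}). Your proposal has no mechanism to produce such ``$k$-fold junction'' directions, and without them the linear system determining the gradients is underdetermined. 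This combinatorial-topological ingredient, not the Interval Lemma propagation, is the real content of the generalization from $k=1,2$ to general $k$, and it is absent from your proposal.
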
   

        This settles an open question posed by Gomory and Johnson in \cite{tspace}. 

One direct application of this result is to investigate the facetness of certain valid functions studied in \cite{basu-cornuejols-koeppe:unique-minimal-liftings-simplicial}. There is a useful procedure known as the \emph{trivial lifting procedure} which can be used to derive minimal valid functions for \eqref{IR} using the Minkowski functionals of maximal lattice-free convex sets. This procedure was studied in \cite{basu-cornuejols-koeppe:unique-minimal-liftings-simplicial} as applied to maximal lattice-free simplices. It turns out that for a special class of such simplices, the minimal valid functions obtained will have $k+1$ slopes and Theorem~\ref{thm:main} can be directly applied to prove that they are facets.
                
        The high-level structure of the proof of Theorem \ref{thm:main} is similar to the proof of the 2-Slope and 3-Slope Theorems presented in \cite{tspace} and \cite{3slope}. Let $\pi \colon \mathbb{R}^k \rightarrow \mathbb{R}$ be a valid function satisfying the assumptions of the theorem. We consider an arbitrary valid function~$\pi'$ such that $S(\pi) \subseteq S(\pi')$, and our goal will be to show that $\pi =\pi'$. In order to achieve this, we first prove in Section \ref{sec:compatible} that if $\pi$ is affine in a region of $\mathbb{R}^k$, then $\pi'$ is also affine in this region, albeit with a different gradient. 

The next step, Section \ref{sec:gradient-set}, is to write a system of equations which is satisfied by the gradients of $\pi$ as well as the gradients of $\pi'$. This is the most involved step in the proof. Some properties proved in \cite{3slope} rely on arguments about low dimensional objects; whereas, the appropriate high dimensional generalizations require more sophisticated techniques. In particular, we make use of a topological lemma about closed coverings of the simplex, the Knaster--Kuratowski--Mazurkiewicz Lemma (Lemma \ref{lem:KKM}), which in the one-dimensional case reduces to the easy fact that an interval cannot be covered by two disjoint closed sets. 

The final step, Section \ref{sec:unique}, is then to prove that this system has a unique solution, which  implies that the gradients of $\pi$ and $\pi'$ are the same.  This,  together with the fact that $\pi(0) = \pi'(0) = 0$,  implies that $\pi = \pi'$. The proof of this last step simplifies the one presented in \cite{3slope} and directly exposes the properties driving the uniqueness of the system.


\section{Preliminaries}

\subsection{Basic polyhedral theory}

        In this section we collect some basic definitions from polyhedral theory (see~\cite{z}) as well as two simple lemmas that will be used throughout the text.  The open ball of radius $\varepsilon$ around a point $r$ will be denoted by $B_\varepsilon(r)$.

\begin{definition}
A {\em polyhedral complex} in $\R^k$ is a collection $\P$ of polyhedra in $\R^k$ such that:
\begin{enumerate}[\rm(i)]
\item if $P \in \P$, then all faces of $P$ are in $\P$,
\item the intersection $P \cap Q$ of two polyhedra $P,Q \in \P$ is a face of both $P$ and $Q$.
\end{enumerate}
\end{definition}

A cell $P$ is {\em maximal} if there is no $Q \in\P$ containing it. The complex is {\em pure} if all
maximal cells have the same dimension. The complex is said to be {\em complete} if the union of all elements in the complex is $\R^k$. A {\em subcomplex} of $\P$ is a subset $\P' \subseteq \P$ such that $\P'$ is itself a polyhedral complex.
 
A {\em polyhedral fan} is a polyhedral complex of a finite number of cones. Given a polyhedral fan $\mathcal{F}$ and $r\in \R^k$, we use the notation $\mathcal{F} + r = \{\, C + r \st C \in \mathcal{F}\,\}$, which is a finite polyhedral complex. Given a cone $C \subseteq \R^k$, a {\em triangulation} of $C$ is a polyhedral fan $\P$ such that each element of $\P$ is a simplicial cone and the union of all elements in $\P$ is $C$. Given a polyhedral fan $\P$, a {\em triangulation} of $\P$ is a polyhedral fan $\mathcal{F}$ such that for every element $P \in\P$, there exists a triangulation of $P$ as a subcomplex of $\mathcal{F}$ and every element of $\mathcal{F}$ is simplicial.

Given a polyhedral complex $\P$ and a set $X \subseteq \R^k$, we use the
notation $\P \cap X$ to denote the collection of sets $\{\, P \cap X \st P\in \P
\textrm{ and } P\cap X \neq \emptyset\,\}$. Observe that if $\P$ is complete,
the union of all sets in $\P \cap X$ is $X$. 

With a slight abuse of notation, for any point $v \in \R^k$ and a polyhedral complex $\P$, we will use $v \in \P$ to denote that $v \in P$ for some element $P \in \P$.

\begin{definition}\label{assum:fan}
A polyhedral complex $\P$ is called \emph{locally finite} if for every point~$r\in\R^k$ there exists an open ball $B_\varepsilon(r)$ around~$r$, such that $\P\cap B_\varepsilon(r)$ equals $(\mathcal{F}_r + r) \cap B_\varepsilon(r)$ for some polyhedral fan $\mathcal{F}_r$ (recall that a polyhedral fan is finite by definition).
\end{definition}

Notice that the above definition is equivalent to stating that each point in $\R^k$ has a neighborhood which intersects only finitely many elements of $\P$. In addition, using standard arguments, it is easy to see that this finite intersection extends from points to compact sets. 

\begin{prop} \label{prop:complex-finite-on-compact}
  Let $\P$ be a locally finite polyhedral complex. Then for every compact set $K \subseteq \R^k$, only finitely many elements of $\P$ intersect $K$.
\end{prop}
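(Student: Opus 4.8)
The plan is a standard compactness argument. First I would observe that local finiteness gives, for each point $r \in \R^k$, an open ball $B_{\varepsilon_r}(r)$ that meets only finitely many elements of $\P$: by Definition~\ref{assum:fan} the collection $\P \cap B_{\varepsilon_r}(r)$ coincides with $(\mathcal{F}_r + r) \cap B_{\varepsilon_r}(r)$, and since the fan $\mathcal{F}_r$ consists of only finitely many cones, at most finitely many cells of $\P$ can have nonempty intersection with $B_{\varepsilon_r}(r)$. This is precisely the reformulation of local finiteness recorded in the text immediately after Definition~\ref{assum:fan}.

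Next I would restrict attention to $r \in K$. The balls $\{\, B_{\varepsilon_r}(r) \st r \in K \,\}$ form an open cover of the compact set $K$, so there is a finite subcover $B_{\varepsilon_{r_1}}(r_1), \dots, B_{\varepsilon_{r_n}}(r_n)$. Let $\mathcal{Q}$ denote the set of those $P \in \P$ that meet at least one of these finitely many balls; by the previous paragraph each ball is met by only finitely many cells, so $\mathcal{Q}$ is a finite set.

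Finally I would argue that every $P \in \P$ with $P \cap K \neq \emptyset$ lies in $\mathcal{Q}$: picking $x \in P \cap K$, the point $x$ lies in some $B_{\varepsilon_{r_i}}(r_i)$, whence $P \cap B_{\varepsilon_{r_i}}(r_i) \neq \emptyset$ and therefore $P \in \mathcal{Q}$. Thus the cells of $\P$ meeting $K$ form a subset of the finite set $\mathcal{Q}$, which proves the claim. There is no genuine obstacle here; the only step that warrants a word of care is the passage from the ``collection-of-sets'' formulation of local finiteness in Definition~\ref{assum:fan} to the assertion that only finitely many cells meet a sufficiently small ball, and this is exactly the equivalence already noted in the text.
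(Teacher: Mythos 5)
Your argument is correct and is exactly the standard compactness argument the paper alludes to when it remarks, just before the proposition, that ``using standard arguments, it is easy to see that this finite intersection extends from points to compact sets''; the paper itself gives no written proof. You have simply filled in the details the authors left implicit, so there is nothing to compare beyond noting that your write-up supplies the omitted routine verification.
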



        Now we present two simple linear algebraic facts that will be useful for the next sections. 

\begin{lemma}\label{lem:lin-indep}
If $r^1, \ldots, r^{k+1} \in \R^k$ are such that $\cone(r^i)_{i=1}^{k+1} =
\R^k,$ then every proper subset of $\{r^1, \ldots, r^{k+1}\}$ is linearly independent. 
\end{lemma}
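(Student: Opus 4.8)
The statement is a clean linear algebra fact: if $k+1$ vectors in $\R^k$ positively span $\R^k$, then any $k$ of them (in fact any proper subset) are linearly independent. I would prove the contrapositive for the $k$-element subsets, and then observe that linear independence of a set is inherited by its subsets, so handling the $k$-element subsets suffices.

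First I would reduce to subsets of size exactly $k$: if some proper subset of $\{r^1,\dots,r^{k+1}\}$ is linearly dependent, then (padding it out with additional vectors) some $k$-element subset is linearly dependent as well, since a superset of a dependent set is dependent. So it is enough to show that every $k$-element subset, say $\{r^1,\dots,r^k\}$ after relabeling, is linearly independent. Suppose not. Then $\spann\{r^1,\dots,r^k\}$ is a proper subspace $H$ of $\R^k$, hence contained in some hyperplane $\{x : a^\top x = 0\}$ with $a \neq 0$. Now the key step: since $\cone(r^1,\dots,r^{k+1}) = \R^k$, every $x \in \R^k$ can be written as $x = \sum_{i=1}^{k+1} \lambda_i r^i$ with $\lambda_i \ge 0$. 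Applying $a^\top$ to this expression and using $a^\top r^i = 0$ for $i \le k$, we get $a^\top x = \lambda_{k+1}\, a^\top r^{k+1}$. If $a^\top r^{k+1} = 0$ as well, then $a^\top x = 0$ for all $x$, contradicting $a \neq 0$. Otherwise $a^\top r^{k+1} \neq 0$, and then $a^\top x = \lambda_{k+1} a^\top r^{k+1}$ always has the same sign as $a^\top r^{k+1}$ (since $\lambda_{k+1} \ge 0$); but $x$ ranges over all of $\R^k$, so $a^\top x$ takes both positive and negative values — a contradiction.

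I don't anticipate a serious obstacle here; the only thing to be careful about is the reduction step (making sure that ``proper subset is dependent'' really does yield a ``$k$-element subset is dependent,'' which follows because any proper subset has at most $k$ elements and can be extended within $\{r^1,\dots,r^{k+1}\}$ to one of size exactly $k$, and extending a dependent set keeps it dependent), and the standard fact that a set spanning a proper subspace lies in a hyperplane through the origin. Everything else is a one-line sign argument using nonnegativity of the conic coefficients.
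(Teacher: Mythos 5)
Your proof is correct and follows essentially the same route as the paper: reduce to $k$-element subsets, observe that a dependent $k$-subset spans a proper subspace contained in a hyperplane, and derive a contradiction from the fact that the cone of all $k+1$ vectors must then lie in a closed half-space (which the paper states geometrically, and you state equivalently by applying the defining linear functional and examining signs). The only cosmetic difference is that you split off the case $a^\top r^{k+1}=0$ explicitly, whereas the paper's half-space phrasing absorbs it silently.
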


\begin{proof}
It suffices to show that every $k$-subset of $\{r^1, \ldots, r^{k+1}\}$ is linearly independent. 
Without loss of generality, we will just show that $r^1, \ldots, r^k$ are linearly independent. 
If not, then there exists a hyperplane $H$ containing the linear span of $r^1, \ldots, r^k$. Suppose $H_+$ is the half-space defined by $H$ containing $r^{k+1}$. 
This implies that $\cone(r^i)_{i=1}^{k+1} \subseteq H_+$, contradicting the fact that  $\cone(r^i)_{i=1}^{k+1} = \R^k$.
\end{proof}

\begin{lemma}\label{lem:cone-span}
Let $\{a^1, \ldots, a^{k+1}\}$ and $\{b^1, \ldots, b^{k+1}\}$ be two sets of $k+1$
vectors in $\R^k$. Suppose $\cone(a^i)_{i=1}^{k+1} = \R^{k}$, and $a^i\cdot
b^j < 0$ for $i\neq j$. Then $\cone(b^j)_{j=1}^{k+1} = \R^k$. 
\end{lemma}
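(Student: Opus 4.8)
The plan is to argue by contradiction: suppose $\cone(b^j)_{j=1}^{k+1} \neq \R^k$. Since a finitely generated cone that is not all of $\R^k$ is contained in some closed halfspace, there is a nonzero vector $c \in \R^k$ with $c \cdot b^j \le 0$ for all $j = 1, \dots, k+1$. The idea is to use the hypothesis $\cone(a^i)_{i=1}^{k+1} = \R^k$ to write $c$ (or rather $-c$, or $c$ itself) as a nonnegative combination of the $a^i$, and then pair this expression against the $b^j$ to derive a sign contradiction from the condition $a^i \cdot b^j < 0$ for $i \neq j$.

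Concretely, since the $a^i$ positively span $\R^k$, write $c = \sum_{i=1}^{k+1} \lambda_i a^i$ with all $\lambda_i \ge 0$, not all zero; say $\lambda_{i_0} > 0$ for some index $i_0$. Now take the inner product with $b^{i_0}$:
\[
c \cdot b^{i_0} = \sum_{i=1}^{k+1} \lambda_i \, (a^i \cdot b^{i_0}) = \lambda_{i_0}\,(a^{i_0}\cdot b^{i_0}) + \sum_{i \neq i_0} \lambda_i\,(a^i \cdot b^{i_0}).
\]
By hypothesis each term $a^i \cdot b^{i_0}$ with $i \neq i_0$ is strictly negative, and $\lambda_i \ge 0$, so the sum over $i \neq i_0$ is $\le 0$. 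The remaining issue is the diagonal term $a^{i_0}\cdot b^{i_0}$, whose sign is not controlled by the hypothesis. To handle this, I would first establish that $a^{i_0} \cdot b^{i_0} > 0$: indeed, by Lemma \ref{lem:lin-indep} the vectors $\{a^i : i \neq i_0\}$ are linearly independent and span a hyperplane $H$; the condition $a^i \cdot b^{i_0} < 0$ for all $i \neq i_0$ says $b^{i_0}$ lies strictly on one side of $H$, while $\cone(a^i)_{i=1}^{k+1} = \R^k$ forces $a^{i_0}$ to lie strictly on the opposite side of $H$ from $\cone(a^i)_{i\neq i_0}$ — and one checks these two "sides" are such that $a^{i_0}\cdot b^{i_0} > 0$. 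Granting this, the displayed sum is strictly negative (the $i_0$ term is $<0$ since $\lambda_{i_0}>0$, and the rest is $\le 0$), so $c \cdot b^{i_0} < 0$, contradicting $c \cdot b^j \le 0$...

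Wait — that is not yet a contradiction; $c \cdot b^{i_0} < 0$ is consistent with $c \cdot b^{i_0} \le 0$. So instead the cleaner route is: apply the separating vector $c$ differently. Since $\cone(a^i) = \R^k$, the vector $-c$ is also in the cone, $-c = \sum_i \mu_i a^i$ with $\mu_i \ge 0$ not all zero, say $\mu_{i_0} > 0$; then $0 \ge (-c)\cdot b^{i_0}$... this has the same problem. The genuinely correct approach: sum over a cleverly chosen index. Take $\mu_{i_0} > 0$ and compute $(-c) \cdot b^{i_0} = \sum_i \mu_i (a^i \cdot b^{i_0})$. We showed $a^{i_0} \cdot b^{i_0} > 0$, so this term is $> 0$, but the cross terms are $\le 0$, giving no immediate sign. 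The resolution is to instead pick the index $i_0$ so that $\mu_{i_0} = 0$ is impossible to avoid — better: note $-c \in \cone(a^i)$ AND $c \in \cone(a^i)$, so *every* $a^i$ appears; more precisely choose any $j$ with $\mu_j > 0$ and pair with a $b^\ell$, $\ell \neq j$...

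\emph{Revised plan.} The key step I expect to be the main obstacle — and which I would isolate as a sub-lemma — is precisely the sign claim $a^{i_0} \cdot b^{i_0} > 0$, proved via Lemma \ref{lem:lin-indep} as sketched above. Once that is in hand, here is the clean contradiction: suppose $\cone(b^j) \neq \R^k$, so there is $c \neq 0$ with $c \cdot b^j \le 0$ for all $j$. Write $c = \sum_i \lambda_i a^i$, $\lambda_i \ge 0$. Since $c \neq 0$, some $\lambda_{i_0} > 0$. Pair with $b^{i_0}$: every cross term $\lambda_i (a^i\cdot b^{i_0})$, $i \neq i_0$, is $\le 0$, and the diagonal term $\lambda_{i_0}(a^{i_0}\cdot b^{i_0})$ is $> 0$ by the sub-lemma; hence $c \cdot b^{i_0}$ could be either sign — so I must instead use $-c$. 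Since $-c \in \R^k = \cone(a^i)$, write $-c = \sum_i \nu_i a^i$, $\nu_i \ge 0$. Now $0 \le -(c \cdot b^j) = \sum_i \nu_i (a^i \cdot b^j)$ for each $j$. Sum this over $j = 1, \dots, k+1$:
\[
0 \le \sum_{j=1}^{k+1} \sum_{i=1}^{k+1} \nu_i\,(a^i \cdot b^j) = \sum_{i=1}^{k+1}\nu_i\Big( a^i \cdot b^i + \sum_{j \neq i} a^i \cdot b^j \Big).
\]
For this to work I need each bracketed quantity $\le 0$, i.e. $a^i \cdot \big(\sum_j b^j\big) \le 0$ — which I do not have for free. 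So the honest summary is: the main work is the sub-lemma $a^{i_0}\cdot b^{i_0} > 0$, plus a careful combinatorial/sign argument (possibly choosing two indices and using that both $c$ and $-c$ lie in $\cone(a^i)$ to force a generator with positive coefficient paired against a strictly negative cross term) to extract the contradiction. I would write the final version by first proving the sub-lemma cleanly from Lemma \ref{lem:lin-indep}, then taking $c$ separating $\cone(b^j)$, expanding $c \in \cone(a^i)$, selecting the index $i_0$ with $\lambda_{i_0}$ maximal (or any positive one) and an index $j_0 \neq i_0$, and comparing $c\cdot b^{i_0}$ against $c \cdot b^{j_0}$ to land the contradiction with $c \cdot b^{j} \le 0$ for all $j$.
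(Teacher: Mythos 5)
Your proposal is incomplete, and you acknowledge as much: you identify the obstacle (the uncontrolled diagonal term $a^{i_0}\cdot b^{i_0}$ when you expand $c$ or $-c$ in the generators $a^i$) but never resolve it. The idea you are missing is Carath\'eodory's theorem for cones: since $\cone(a^i)_{i=1}^{k+1}=\R^k$ and there are $k+1$ generators in $\R^k$, every vector can be written as a nonnegative combination of at most $k$ of them, i.e.\ with one generator \emph{dropped}. This is exactly what eliminates the diagonal term. The paper's proof runs as follows: to show $X^\circ=\{0\}$, take any $r^0\neq 0$ and write $-r^0=\sum_{i\neq j}\lambda_i a^i$ with $\lambda_i\ge 0$ for some index~$j$ (Carath\'eodory); then $b^j\cdot(-r^0)=\sum_{i\neq j}\lambda_i\,(b^j\cdot a^i)<0$ because every cross term is nonpositive and at least one $\lambda_i$ is positive. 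Hence $b^j\cdot r^0>0$, so $r^0\notin X^\circ$. The sign works out precisely because one applies the expansion to $-r^0$ rather than $r^0$ — the direction you were groping toward with ``use $-c$'' but never closed because you kept all $k+1$ generators in the sum.

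A secondary issue: your proposed sub-lemma $a^{i_0}\cdot b^{i_0}>0$ is in fact true, but the justification you sketch is wrong. You say $\{a^i: i\neq i_0\}$ ``span a hyperplane $H$,'' but these are $k$ linearly independent vectors in $\R^k$ (by Lemma~\ref{lem:lin-indep}), so they span all of $\R^k$, not a hyperplane. A correct route to the sub-lemma would again pass through Carath\'eodory: write $-a^{i_0}=\sum_{i\neq i_0}\mu_i a^i$ with $\mu_i\ge 0$, not all zero (the linear independence forces the dropped index to be $i_0$), and then $a^{i_0}\cdot b^{i_0}=-\sum_{i\neq i_0}\mu_i(a^i\cdot b^{i_0})>0$. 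But once you have Carath\'eodory in hand, the detour through this sub-lemma is unnecessary; the paper's direct argument is both shorter and cleaner.
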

\begin{proof}
We show that the cone $X = \cone(b^j)_{j=1}^{k+1} = \R^k$ by considering the polar cone
$$X^\circ = \{\, r \in \R^k\st b^j\cdot r \leq 0, \;\;j = 1, \ldots, k+1\, \} $$ and equivalently showing that $X^\circ = \{0\}$. 
Consider any vector $r^0 \neq 0$. Since  $\cone(a^i)_{i=1}^{k+1} = \R^{k}$, by
Carath\'eodory's theorem, there exists $j\in\{1, \ldots, k+1\}$ such that $-r^0
= \sum_{i \neq j}\lambda_i a^i$ with $\lambda_i \geq 0$. Since $b^j\cdot a^i <
0$ for all $i \neq j$, we have that $b^j\cdot(-r^0) < 0$, or equivalently, $b^j\cdot r^0 >
0$. Thus $r^0 \not\in X^\circ$.
\end{proof}

\subsection{Piecewise linear functions} 

We now give a precise definition of piecewise linear functions and
related notions.
        
\begin{definition}
  Let $\mathcal{P}$ be a pure, complete polyhedral complex in $\R^k$ and let
  $\{\mathcal{P}_i\}_{i \in I}$ be a partition of the set of maximal cells of
  $\mathcal{P}$.  Consider a function $\theta \colon \R^k \rightarrow \R$
  such that for each $i\in I$, there exists a vector $g^i \in \R^k$ such
  that for every $P \in \mathcal{P}_i$, there exists a constant $\delta_P$
  such that $\theta(r) = g^i \cdot r + \delta_P$ for all $r \in P$.
  Then $\theta$ is called a \emph{piecewise linear function}, more specifically
  a \emph{piecewise linear function with cell complex~$\mathcal P$}, 
  and a \emph{piecewise linear function compatible with $\{\mathcal{P}_i\}_{i \in I}^{}$}.
\end{definition}

Given a piecewise linear function $\theta$ with cell complex $\P$, for any maximal cell $P \in \P$ let $g^P \in \R^k$ denote the vector such that $\theta(r) = g^P\cdot r +\delta_P$ for some constant $\delta_P$. 
We define the equivalence relation $\sim$ on the maximal elements of $\P$ according to their gradients as $P \sim P'$ if and only if $g^P = g^{P'}$. 
Each equivalence class defines a subcomplex $\P_i \subseteq \P$, $i\in I$ for some index set $I$. We say that $\theta$ has {\em n slopes} if $I$ is finite and $|I|=n$. The {\em gradient set} of a piecewise linear function is the set of vectors $\{g^i\}_{i\in I}$ corresponding to the equivalence classes $\P_i$, namely $g^i = g^P$ for all $P \in \P_i$.


\subsection{Lipschitz continuity}

The following two lemmas assert strong continuity properties of piecewise linear, subadditive  functions with a locally finite cell complex.  We will use $\| \cdot\|$ to denote the Euclidean norm.

\begin{lemma}\label{lem:PWL+LF=LLC}
Let $\theta\colon\R^k \rightarrow \R$ be a piecewise linear function with a 
locally finite cell complex.
Moreover, suppose $\theta(0) = 0$. Then $\theta$ is locally Lipschitz
continuous at the origin, i.e., there exist $\varepsilon > 0$ and $K > 0$ such
that $|\theta(r)| \leq K \|r\|$ for all $r \in B_\varepsilon(0)$. 
\end{lemma}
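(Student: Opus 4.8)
The plan is to exploit local finiteness to reduce the global statement to a finite, purely combinatorial situation near the origin, and then bound $\theta$ cone by cone. First I would invoke Definition~\ref{assum:fan}: there exist $\varepsilon_0>0$ and a polyhedral fan $\mathcal{F}_0$ such that $\P\cap B_{\varepsilon_0}(0) = (\mathcal{F}_0 + 0)\cap B_{\varepsilon_0}(0) = \mathcal{F}_0 \cap B_{\varepsilon_0}(0)$; in particular every cell of $\P$ meeting $B_{\varepsilon_0}(0)$ is (the restriction of) a cone with apex at $0$, and there are only finitely many of them, say the maximal ones are $C_1,\dots,C_m$. Since $0$ lies in each $C_j$ and $\theta$ agrees with an affine function $g^{C_j}\cdot r + \delta_{C_j}$ on $C_j$, evaluating at $r=0$ and using $\theta(0)=0$ forces $\delta_{C_j}=0$, so $\theta(r) = g^{C_j}\cdot r$ for all $r\in C_j\cap B_{\varepsilon_0}(0)$. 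This is the key point: near the origin the function is genuinely \emph{linear} on each cone, with no constant term.

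Next I would set $K := \max_{1\le j\le m}\|g^{C_j}\|$, which is finite because there are finitely many maximal cones. For any $r\in B_{\varepsilon_0}(0)$, the point $r$ lies in some maximal cell of $\P\cap B_{\varepsilon_0}(0)$, hence in some $C_j\cap B_{\varepsilon_0}(0)$ (here I use that $\P$ is pure and complete, so $r$ is covered and can be taken in a maximal cell). Then by Cauchy--Schwarz $|\theta(r)| = |g^{C_j}\cdot r| \le \|g^{C_j}\|\,\|r\| \le K\|r\|$. Taking $\varepsilon = \varepsilon_0$ finishes the argument.

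The only mild subtlety — and the step I would be most careful about — is the passage ``$r\in B_{\varepsilon_0}(0)$ implies $r$ lies in some maximal cone $C_j$,'' i.e., that the local fan description is compatible with maximality of cells: a priori $\P\cap B_{\varepsilon_0}(0)$ records cells of $\P$ clipped to the ball, and one must check that the clipped maximal cells still cover the ball and that each clipped cell is contained in a clipped \emph{maximal} cell. This follows from completeness and purity of $\P$ together with the fact that, in the fan $\mathcal{F}_0$, every cone is a face of a maximal cone; alternatively one shrinks $\varepsilon_0$ so that $B_{\varepsilon_0}(0)$ meets only cells of $\P$ that contain $0$, and then any such cell is a face of one of the finitely many maximal cells containing $0$. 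No genuine difficulty arises beyond this bookkeeping; the essential content is simply ``finitely many linear pieces through the origin, take the largest gradient norm.''
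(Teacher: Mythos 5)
Your proof is correct and follows essentially the same path as the paper's: use local finiteness to obtain a finite fan around the origin, use $\theta(0)=0$ to kill the constant terms so that $\theta$ is linear (not merely affine) on each maximal cone there, and then take $K$ to be the maximum gradient norm and apply Cauchy--Schwarz. The ``mild subtlety'' you flag is handled the same way in the paper (completeness and purity of $\P$ ensure the finitely many maximal cells through the origin cover the ball), so there is no real divergence between the two arguments.
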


\begin{proof}
Let $\theta$ be a piecewise linear function with a locally finite cell complex $\P$. Since $\P$ is locally finite, there exists an open ball~$B_\varepsilon(0)$ around the origin such that $\P\cap B_\varepsilon(0) =
\mathcal{F}\cap B_\varepsilon(0)$ for some complete polyhedral fan $\mathcal{F}$. This
implies that there exists a \emph{finite} subcomplex $\P' \subseteq \P$ such
that $\P\cap B_{\varepsilon}(0) = \P' \cap B_{\varepsilon}(0)$ and every maximal element of $\P'$ contains the
origin. Therefore, the union of all $P$ in $\P'$ contains $B_{\varepsilon}(0)$.

Since $\theta$ is piecewise linear and $\theta(0)=0$, for every maximal element $P\in \P'$, there exists $g^P\in \R^k$ such that $\theta(r) = g^P\cdot r$ for all $r \in P$ and so $|\theta(r)| \leq \|g^P\|\|r\|$ by the Cauchy--Schwarz inequality. Let $K = \max\{\,\|g^P\|\st P \in \P' \,\}$. Then  $| \theta(r)|\leq K\| r\|$ for all $r\in B_\varepsilon(0)$.
\end{proof}

\begin{lemma}\label{lem:LLC+SA=LC}
Let $\theta\colon\R^k \rightarrow \R$ be a subadditive function that is locally Lipschitz continuous at the origin. Then $\theta$ is (globally) Lipschitz continuous.
\end{lemma}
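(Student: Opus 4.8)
The plan is to show that the local Lipschitz bound at the origin propagates to all of $\R^k$ using only subadditivity. By hypothesis there exist $\varepsilon > 0$ and $K > 0$ with $|\theta(r)| \le K\|r\|$ for all $r \in B_\varepsilon(0)$. First I would record the elementary consequence of subadditivity that $\theta(0) \le 2\theta(0)$, hence $\theta(0) \ge 0$, while the local bound forces $\theta(0) = 0$; this is a minor but convenient normalization. The real content is a telescoping argument: given any two points $a, b \in \R^k$, I want to estimate $\theta(a) - \theta(b)$ in terms of $\|a - b\|$, and by symmetry of the roles of $a$ and $b$ it suffices to bound $\theta(a) - \theta(b)$ from above.

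The key step is the following. Write $d = a - b$. Choose an integer $N$ large enough that $\|d\|/N < \varepsilon$, and set $u = d/N$, so $u \in B_\varepsilon(0)$ and hence $|\theta(u)| \le K\|u\| = K\|d\|/N$. Apply subadditivity repeatedly: since $a = b + Nu$, we get
\[
\theta(a) = \theta(b + Nu) \le \theta(b) + N\,\theta(u) \le \theta(b) + N \cdot K\frac{\|d\|}{N} = \theta(b) + K\|a - b\|.
\]
Swapping $a$ and $b$ gives $\theta(b) \le \theta(a) + K\|a-b\|$, and combining the two yields $|\theta(a) - \theta(b)| \le K\|a - b\|$ for all $a, b \in \R^k$, which is exactly global Lipschitz continuity with the same constant $K$.

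I expect no serious obstacle here; the only point requiring a little care is the direction of the subadditivity inequality (it gives an upper bound on $\theta(a+b)$, so one must be careful to set up the telescoping so that $\theta(a)$ appears on the left and $\theta(b)$ plus a controllable error on the right), and the observation that $N$ can be chosen uniformly for the pair $(a,b)$ since it depends only on $\|a-b\|$. No compactness or piecewise-linearity is needed in this lemma — those hypotheses entered only in Lemma~\ref{lem:PWL+LF=LLC} to establish the local bound that is now taken as given.
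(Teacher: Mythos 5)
Your argument is correct and is essentially the same approach as the paper's: both telescope along the segment from $b$ to $a$ in steps of length less than $\varepsilon$, transfer the local bound at the origin to each increment via subadditivity, and sum. The paper organizes this by first proving a uniform local Lipschitz estimate at every point and then chaining by the triangle inequality, while you iterate subadditivity directly to get $\theta(a) \le \theta(b) + N\theta(u)$; the result, including the Lipschitz constant $K$, is identical.
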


\begin{proof}
We first show that there exist $K > 0$ and $\varepsilon > 0$, such that given any $\tilde r\in \R^n$, $| \theta(r)-\theta(\tilde r)|\leq K\| r - \tilde r\|$ for all $r\in\R^n$ satisfying $\|r-\tilde r\|<\varepsilon$. Indeed, since $\theta$ is locally Lipschitz continuous at the origin, it follows that there exists $K > 0$, $\varepsilon>0$ such that, for all $r\in\R^n$ satisfying $\|r-\tilde r\|<\varepsilon$, we have $|\theta(r-\tilde r)|\leq K\| r - \tilde r\|$.
Hence, for all $r\in\R^n$ satisfying $\|r-\tilde r\|<\varepsilon$, $$|\theta(r)-\theta(\tilde r)|\leq \max\{\theta(\tilde r-r),\theta(r-\tilde r)\}\leq K\| r - \tilde r\|,$$
where the first inequality follows from the subadditivity of $\theta$.

We now show that for any $\tilde r, r \in \R^k$, $|\theta(\tilde r)-\theta(r)|
\leq K\| \tilde r - r\|$. Define $m$ to be an integer larger than $\| \tilde r - r\|/\varepsilon$. Let $r^i = \frac{i}{m}(r - \tilde r) + \tilde r$, so $r^0 = \tilde r$ and $r^m = r$. Moreover, $\|r^{i+1} - r^i\| = \|\tilde r - r\|/m < \varepsilon$. Therefore, $|\theta(r^{i+1})-\theta(r^i)| \leq K\|r^{i+1} - r^i\|$ for all $i = 0, \ldots, m-1$. Hence,
\begin{equation}
|\theta(\tilde r) - \theta(r)| \leq \sum_{i=0}^{m-1}|\theta(r^{i+1}) -
\theta(r^i)| \leq \sum_{i=0}^{m-1}K\|r^{i+1} - r^i\| = K\|\tilde r - r\|. \pushQED{\relax}\tag*{\qed}
\end{equation}
\end{proof}

\subsection{Properties of genuinely $k$-dimensional functions}
Now we focus on properties that we gain by imposing that a function is genuinely $k$-dimensional. We will need the following lemma, which is implied by Lemma 13 in~\cite{bccz} and is a consequence of Dirichlet's Approximation Theorem for the reals.

\begin{lemma}\label{lem:half-line}
Let $y\in \R^k$ be any point and $r \in \R^k\setminus \{0\}$ be any direction. Then for every $\varepsilon > 0$ and $\bar\lambda \geq 0$, there exists $w \in \Z^k$ such that $y + w$ is at distance less than $\varepsilon$ from the half line $\{y + \lambda r \st \lambda \geq \bar\lambda\}$.
\end{lemma}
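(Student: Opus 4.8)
The plan is to reformulate the statement as one about simultaneous Diophantine approximation. Producing $w \in \Z^k$ with $y + w$ within $\varepsilon$ of the half-line $\{\,y + \lambda r \st \lambda \geq \bar\lambda\,\}$ amounts to producing a single scalar $\lambda_0 \geq \bar\lambda$ together with a vector $w \in \Z^k$ such that $\|w - \lambda_0 r\| < \varepsilon$: the point $y + \lambda_0 r$ then lies on the half-line and certifies the claimed proximity, since $\|(y+w) - (y + \lambda_0 r)\| = \|w - \lambda_0 r\|$.

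To find such $\lambda_0$ and $w$, first fix a coordinate $j$ with $r_j \neq 0$ (which exists since $r \neq 0$), put $\theta_i = r_i/r_j$ for $i \neq j$, and restrict attention to scalars of the form $\lambda_0 = q/r_j$ with $q \in \Z$; then $\lambda_0 r_j = q$ is automatically an integer, while $\lambda_0 r_i = q\theta_i$ for $i \neq j$. Next, by simultaneous Dirichlet approximation applied to the $k-1$ reals $(\theta_i)_{i \neq j}$, for every $\delta > 0$ there exist arbitrarily large positive integers $p$ with $\dist(p\theta_i, \Z) < \delta$ for all $i \neq j$. Choosing $q = \pm p$ so that $q$ has the sign of $r_j$ (this leaves $\dist(q\theta_i,\Z)$ unchanged) and $p$ large enough that $\lambda_0 = q/r_j = p/|r_j| \geq \bar\lambda$, and then setting $w_j = q$ and letting $w_i$ be a nearest integer to $q\theta_i$ for $i \neq j$, one gets $\|w - \lambda_0 r\|^2 = \sum_{i \neq j}\dist(q\theta_i,\Z)^2 \leq (k-1)\delta^2$. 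Taking $\delta = \varepsilon/\sqrt{k}$ finishes the argument; the case $k = 1$ is the trivial instance in which there are no coordinates $i \neq j$ and $w = q = \lambda_0 r$ matches exactly.

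I do not expect a deep obstacle, but the step that needs genuine care is the word \emph{arbitrarily large} above. The bare form of Dirichlet's theorem only yields a single small-denominator approximation, whereas here we must simultaneously force $\lambda_0 = p/|r_j| \geq \bar\lambda$; this requires invoking the standard strengthening that there are infinitely many solutions, or arguing directly by a pigeonhole/box argument, in the course of which one must separate off the degenerate case where all $\theta_i$ are rational (there any sufficiently large multiple of a common denominator of the $\theta_i$ works). The only other bookkeeping point is the sign choice $q = \pm p$, needed so that $\lambda_0 = q/r_j$ lands on the forward half-line $\lambda \geq \bar\lambda \geq 0$.
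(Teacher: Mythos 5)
The paper does not actually supply a proof of Lemma~\ref{lem:half-line}: it only cites Lemma~13 of~\cite{bccz} and remarks that the statement is ``a consequence of Dirichlet's Approximation Theorem for the reals.'' Your self-contained argument is therefore not really an alternative route to a proof in the paper; it fills a gap, and it does so correctly along precisely the lines the paper gestures at. The reformulation --- produce $\lambda_0 \geq \bar\lambda$ and $w\in\Z^k$ with $\|w-\lambda_0 r\|<\varepsilon$ --- is exactly what is needed, since the point $y+\lambda_0 r$ is on the half-line and $\|(y+w)-(y+\lambda_0 r)\| = \|w-\lambda_0 r\|$. Normalizing by a nonzero coordinate $r_j$ so that $\lambda_0 r_j$ is an integer by fiat, and then approximating the $k-1$ ratios $\theta_i = r_i/r_j$, is a clean reduction. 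You are right that the single delicate point is needing \emph{arbitrarily large} denominators rather than one small one, because $\bar\lambda$ can be arbitrarily large. That is a genuine issue and your sketch of its resolution is correct: if all $\theta_i$ are rational, any sufficiently large multiple of a common denominator gives exact agreement; if some $\theta_{i_0}$ is irrational, then for any bound $M$ the quantity $\min_{1\le q\le M}\dist(q\theta_{i_0},\Z)$ is a positive number (a minimum of finitely many positive terms), so applying Dirichlet's simultaneous theorem with $1/N$ strictly below both $\delta$ and that minimum forces the resulting denominator to exceed $M$. The sign adjustment $q=\pm p$ is harmless since $\dist(\cdot,\Z)$ is even, and the $k=1$ degenerate case is handled as you say. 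The proof is sound.
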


\begin{lemma}\label{lem:genuinely-k-dim}
Let $\theta \colon \R^k \rightarrow \R$ be non-negative, Lipschitz continuous, subadditive and periodic with respect to the lattice $\Z^k$. 
Suppose there exist $r \in \R^k\setminus\{0\}$ and $\bar\lambda >0$ such that
$\theta(\lambda r) = 0$ for all $0\leq \lambda \leq \bar\lambda$. Then $\theta$ is not genuinely $k$-dimensional.
\end{lemma}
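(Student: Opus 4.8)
The plan is to show that $\theta$ factors through the linear quotient map $T$ that collapses the line $\R r$, i.e.\ to construct $\varphi\colon\R^k/\R r\cong\R^{k-1}\to\R$ with $\theta=\varphi\circ T$. The only thing that needs to be checked is that $\theta$ is constant along every line parallel to $r$; once that is known, $\theta$ is well-defined on the quotient $\R^k/\R r$, and since $T$ is a surjective linear map onto a $(k-1)$-dimensional space, this exhibits $\theta$ as not genuinely $k$-dimensional. So the real content is the claim: $\theta(y+\lambda r)=\theta(y)$ for all $y\in\R^k$ and all $\lambda\in\R$.

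First I would upgrade the hypothesis ``$\theta(\lambda r)=0$ for $0\le\lambda\le\bar\lambda$'' to ``$\theta(\lambda r)=0$ for \emph{all} $\lambda\ge 0$.'' This follows from subadditivity: for any $\mu\ge 0$, write $\mu r$ as a sum of finitely many vectors each of the form $\lambda_i r$ with $0\le\lambda_i\le\bar\lambda$, so $\theta(\mu r)\le\sum_i\theta(\lambda_i r)=0$, and $\theta\ge 0$ gives $\theta(\mu r)=0$. Next, for an arbitrary $y$ and $\lambda\ge 0$, subadditivity gives $\theta(y+\lambda r)\le\theta(y)+\theta(\lambda r)=\theta(y)$; the inequality in the other direction, $\theta(y)\le\theta(y+\lambda r)$, is the crux. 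Here I would invoke Lemma~\ref{lem:half-line}: choose a small $\varepsilon>0$ and $w\in\Z^k$ with $y+w$ within distance $\varepsilon$ of the half-line $\{\,y+\mu r:\mu\ge\lambda\,\}$, say $\|(y+w)-(y+\mu_0 r)\|<\varepsilon$ for some $\mu_0\ge\lambda$. Then by periodicity $\theta(y)=\theta(y+w)$, by Lipschitz continuity $|\theta(y+w)-\theta(y+\mu_0 r)|\le K\varepsilon$, and by the first step together with subadditivity $\theta(y+\mu_0 r)\le\theta(y+\lambda r)+\theta((\mu_0-\lambda)r)=\theta(y+\lambda r)$. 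Chaining these, $\theta(y)\le\theta(y+\lambda r)+K\varepsilon$; letting $\varepsilon\to 0$ yields $\theta(y)\le\theta(y+\lambda r)$, hence equality for all $\lambda\ge 0$. Finally, applying this with $y$ replaced by $y+\lambda r$ and $\lambda$ replaced by $-\lambda$ gives the statement for negative $\lambda$ as well, so $\theta$ is constant on every line in direction $r$.

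To finish, let $H$ be any linear hyperplane complementary to $\R r$ and let $T\colon\R^k\to H\cong\R^{k-1}$ be the projection along $r$. Every point of $\R^k$ has the form $h+\lambda r$ with $h\in H$, $\lambda\in\R$, and by the previous paragraph $\theta(h+\lambda r)=\theta(h)$; so defining $\varphi(h)=\theta(h)$ for $h\in H$ gives $\theta=\varphi\circ T$, and $\theta$ is not genuinely $k$-dimensional.

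\textbf{Main obstacle.} The delicate point is the lower bound $\theta(y)\le\theta(y+\lambda r)$: subadditivity alone only produces the upper bound, and we cannot subtract in a subadditive setting. The role of Lemma~\ref{lem:half-line} (Dirichlet approximation) is precisely to get back to $y$ from far out along the ray $y+\mu r$ using an integer translation, and global Lipschitz continuity (available here via Lemmas~\ref{lem:PWL+LF=LLC} and \ref{lem:LLC+SA=LC}, though we only need the hypothesis that $\theta$ is Lipschitz) is what lets the approximation error be absorbed. One should be a little careful that the approximating point $y+\mu_0 r$ lies at parameter $\mu_0\ge\lambda$ so that the leftover displacement $(\mu_0-\lambda)r$ points in the $+r$ direction and has $\theta$-value zero; Lemma~\ref{lem:half-line} with $\bar\lambda:=\lambda$ delivers exactly this.
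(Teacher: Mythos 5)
Your proof is correct and follows essentially the same strategy as the paper's: upgrade the hypothesis to $\theta(\mu r)=0$ for all $\mu\ge 0$ via subadditivity and non-negativity, then combine Lemma~\ref{lem:half-line} with periodicity and Lipschitz continuity to obtain the reverse inequality, and finally factor $\theta$ through the projection along $r$. The only minor structural difference is that the paper first shows $\theta$ vanishes on the entire line $\R r$ (including negative multiples) and then deduces constancy on translates via subadditivity, whereas you apply the Dirichlet-approximation step directly to an arbitrary point $y$; the content and the role of each hypothesis are identical.
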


\begin{proof}
Let the Lipschitz constant for $\theta$ be $K$, that is, $|\theta(x)-\theta(y)| \leq K\|x-y\|$ for all $x,y\in \R^k$.

We will begin by showing that $\theta(\lambda r) = 0$ for all $\lambda \in \R$.  Let $\lambda' \in \R$.

Suppose that $\lambda' > \bar \lambda$ and let $M \in \Z_+$ such that $0 \leq \lambda'/M \leq \bar \lambda$.  From the hypothesis, we have that $\theta(\frac{\lambda'}{M} r)= 0$.  By non-negativity and subadditivity of $\theta$ we see
$
0 \leq \theta(\lambda' r) \leq M \theta(\frac{\lambda'}{M} r)= 0,
$
and therefore, $\theta(\lambda' r) = 0$. This shows that $\theta(\lambda r) = 0$ for all $\lambda \geq 0$.

Next suppose $\lambda' < 0$.  By Lemma~\ref{lem:half-line}, for all $\varepsilon >0$ there exists a $w \in \Z^k$ such that $\lambda' r + w$ is at distance less than $\varepsilon$ from the half line $\{\lambda' r + \lambda r\st \lambda \geq -\lambda'\} = \{\lambda r \st \lambda \geq 0\}$.  That is, there exists a $\tilde\lambda \geq 0$ such that $\|\lambda' r + w - \tilde \lambda r\| \leq \varepsilon$. Since $\theta(\tilde \lambda r) = 0$, by periodicity and then Lipschitz continuity, we see that
$
0 \leq \theta(\lambda' r) = \theta(\lambda'r + w) = \theta( \lambda' r + w) - \theta(\tilde \lambda r) \leq K \varepsilon.
$
This holds for every $\varepsilon >0$ and therefore $\theta(\lambda'r) = 0$.  Thus, we have shown that $\theta(\lambda r) = 0$ for all $\lambda \in \R$.

Let $L = \{ \lambda r \st \lambda \in \R\}$.  We claim that if $x-y \in L$, then $\theta(x) = \theta(y)$. Since $x-y \in L$, as shown above, $\theta(x-y) = 0$.  By subadditivity, $\theta(y) + \theta(x-y) \geq \theta(x)$, which implies $\theta(y) \geq \theta(x)$. Similarly,  $\theta(x) \geq \theta(y)$, and hence we have equality.

We conclude that $\theta = \varphi \circ \proj_{L^\perp}$ for some function $\varphi \colon \R^{k-1} \rightarrow \R$ and therefore $\theta$ is not  genuinely $k$-dimensional.
\end{proof}

\begin{lemma}\label{lem:gi-cone}
Let $\theta \colon \R^k \rightarrow \R$ be non-negative, piecewise linear with
a locally finite cell complex, subadditive, periodic with respect to the lattice $\Z^k$ and genuinely $k$-dimensional with at most $k+1$ slopes and suppose $\theta(0) = 0$.  Then $\theta$ has exactly $k+1$ slopes. Let the gradient set of $\theta$ be the vectors $g^1, \ldots, g^{k+1}$; then they satisfy $\cone(g^i)_{i=1}^{k+1} = \R^k$.  Furthermore, for every $i=1, \ldots, k+1$, there exists a maximal cell $P \in \P_i$ such that $0 \in P$.
\end{lemma}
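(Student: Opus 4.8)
The plan is to reduce the statement to a local analysis at the origin, where the normalization $\theta(0)=0$ forces $\theta$ to agree with a positively homogeneous piecewise linear function, and then to combine nonnegativity, subadditivity, and genuine $k$-dimensionality. First I would record that $\theta$ is \emph{globally} Lipschitz continuous: Lemma~\ref{lem:PWL+LF=LLC} gives local Lipschitz continuity at the origin (using $\theta(0)=0$ and that the cell complex is locally finite), and Lemma~\ref{lem:LLC+SA=LC} upgrades this to global Lipschitz continuity using subadditivity. This is precisely what is needed to apply Lemma~\ref{lem:genuinely-k-dim} at the end.

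Next I would localize at the origin. By Definition~\ref{assum:fan} there are $\varepsilon>0$ and a polyhedral fan $\mathcal F$ with $\P\cap B_\varepsilon(0)=\mathcal F\cap B_\varepsilon(0)$; since $\P$ is complete, $\mathcal F$ is a complete fan, so its maximal cones $C_1,\dots,C_m$ are full-dimensional and each contains $0$. For each $j$, the $k$-dimensional set $C_j\cap B_\varepsilon(0)$ equals $P_j\cap B_\varepsilon(0)$ for a unique maximal cell $P_j\in\P$, and $0\in C_j\cap B_\varepsilon(0)\subseteq P_j$; since $\theta(r)=g^{P_j}\cdot r+\delta_{P_j}$ on $P_j$ and $\theta(0)=0$, we get $\delta_{P_j}=0$, hence $\theta(r)=g^{P_j}\cdot r$ for $r\in C_j\cap B_\varepsilon(0)$. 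Define $\phi\colon\R^k\to\R$ by $\phi(r)=g^{P_j}\cdot r$ whenever $r\in C_j$. One checks $\phi$ is well-defined (on the lower-dimensional overlaps $C_j\cap C_{j'}$ the two formulas both equal $\theta$ near $0$, using continuity of $\theta$), continuous, positively homogeneous of degree $1$, and $\phi\equiv\theta$ on $B_\varepsilon(0)$. In particular $\phi\ge 0$, and from subadditivity of $\theta$ near $0$ together with positive homogeneity, $\phi$ is sublinear; therefore $\phi=\max_{g\in G}\,g\cdot(\cdot)$ where $G=\conv\{g^{P_1},\dots,g^{P_m}\}$ is a polytope whose vertices lie among the gradients $g^{P_j}$ (these being exactly the gradients of maximal cells of $\P$ that contain $0$, hence a subset of $\{g^1,\dots,g^{k+1}\}$).

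Now I would use the remaining hypotheses. If $\phi(r)=0$ for some $r\ne 0$, then $\theta(\lambda r)=\phi(\lambda r)=\lambda\,\phi(r)=0$ for all $0\le\lambda\le\bar\lambda$ with $\bar\lambda:=\varepsilon/(2\|r\|)>0$, so Lemma~\ref{lem:genuinely-k-dim} would contradict genuine $k$-dimensionality; hence $\phi(r)>0$ for every $r\ne 0$. Since $\phi=\max_{g\in G}\,g\cdot(\cdot)$ is the support function of $G$, a standard separation argument then gives $0\in\intr G$ (otherwise a supporting/separating direction $r\ne0$ would satisfy $g\cdot r\le0$ for all $g\in G$, forcing $\phi(r)\le0$). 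Thus $\dim G=k$, so among $g^{P_1},\dots,g^{P_m}$ there are at least $k+1$ affinely independent, hence at least $k+1$ distinct, vectors; but $\theta$ has at most $k+1$ slopes, so in fact $\theta$ has exactly $k+1$ slopes and $\{g^{P_1},\dots,g^{P_m}\}=\{g^1,\dots,g^{k+1}\}$. Consequently $G=\conv\{g^1,\dots,g^{k+1}\}$ with $0\in\intr G$, which immediately yields $\cone(g^i)_{i=1}^{k+1}=\R^k$: for any $x\in\R^k$ some $tx\in G$ with $t>0$, whence $x\in\cone(g^i)_{i=1}^{k+1}$. Finally, each $g^i$ equals some $g^{P_j}$, and the corresponding maximal cell $P_j\in\P_i$ satisfies $0\in P_j$, proving the last assertion.

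The step I expect to require the most care is the construction and analysis of the local homogeneous model $\phi$: matching the maximal cones of $\mathcal F$ near $0$ with the maximal cells of $\P$ through $0$, verifying that $\phi$ is well-defined and continuous across the cell overlaps, and invoking (or proving in this setting) the representation of a piecewise linear sublinear function as the support function of the convex hull of its gradients. Once $\phi$ is in hand, the remainder is a short combination of nonnegativity, Lemma~\ref{lem:genuinely-k-dim}, and elementary convex geometry.
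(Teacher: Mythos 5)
Your proof is correct and reaches the same conclusion via the same key lemmas (Lemmas~\ref{lem:PWL+LF=LLC}, \ref{lem:LLC+SA=LC}, and most importantly \ref{lem:genuinely-k-dim}), but the route you take is the dual of the one in the paper. The paper works directly on the primal side: it lets $m$ be the number of slope classes $\mathcal P_i$ containing the origin, forms the polar cone $C = \{\,r : \gp^i\cdot r \leq 0,\ i=1,\dots,m\,\}$, and shows $C=\{0\}$ by picking any $r^0\in C\setminus\{0\}$, noting that the ray from the origin in direction $r^0$ stays in a single maximal cone of the local fan and hence in a single cell with gradient $\gp^i\cdot r^0\leq 0$, so nonnegativity forces $\theta$ to vanish on a segment, contradicting genuine $k$-dimensionality via Lemma~\ref{lem:genuinely-k-dim}. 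Your proof instead builds the positively homogeneous local model $\phi(r)=g^{P_j}\cdot r$ on the fan, transfers subadditivity from $\theta$ to $\phi$ by scaling, identifies $\phi$ as the support function of $G=\conv\{g^{P_j}\}$, and uses ``$\phi>0$ off the origin $\Leftrightarrow 0\in\intr G$'' to get the same conclusion. These are the same geometric fact expressed in polar-dual terms: $\phi(r)>0$ for all $r\ne 0$ is precisely the statement that the polar cone of $\{g^{P_j}\}$ is $\{0\}$. What your formulation buys is a slightly cleaner mechanism for the vanishing-segment step (it is automatic from homogeneity of $\phi$, rather than from the explicit observation that a ray from the origin stays in one cone of a fan), at the cost of invoking more machinery: well-definedness of $\phi$ across cone boundaries, the max-of-affine-pieces representation of a piecewise linear convex function, and support-function/separation facts. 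Either route is a valid proof; yours is not a gap but it is heavier than the paper's direct polar-cone argument. One small remark: you do not strictly need the full support-function representation $\phi=\max_{g\in G}g\cdot(\cdot)$ to finish -- from $\phi(r)>0$ for all $r\ne 0$ you can read off directly that for each $r\ne 0$ some $g^{P_j}\cdot r>0$, i.e.\ the polar cone of $\{g^{P_j}\}$ is trivial, which already gives $\cone(g^{P_j})=\R^k$ and then the slope count by the dimension/Carath\'eodory argument.
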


\begin{proof}
First we note that since $\theta$ is  subadditive, satisfies $\theta(0)=0$ and is piecewise linear with a locally finite cell complex, Lemmas~\ref{lem:PWL+LF=LLC} and~\ref{lem:LLC+SA=LC} imply that $\theta$ is a Lipschitz continuous function.

We label the gradient set of $\theta$ as $g^1, \ldots, g^n$ and the corresponding subcomplexes as $\P_1, \ldots, \P_n$ where $n \leq k+1$. Without loss of generality, assume that $0 \in \P_i$ for $i \leq m$ and $0 \not\in \P_i$ for $i > m$ for some $m \leq n \leq k+1$. Let $C = \{\,r \in \R^k \st
g^i\cdot r \leq 0, \;\; i = 1, \ldots, m\,\}$ be the polar cone of
$\cone(g^i)_{i=1}^m$. We show that $C = \{0\}$, which implies that
$\cone(g^i)_{i=1}^m = \R^k$. This would imply that $m = k+1$ and
$\cone(g^i)_{i=1}^{k+1} = \R^k$. Moreover, this would imply that $0 \in \P_i$ for every $i$ and so there exists a maximal cell in $\P_i$ containing $0$.

Suppose there exists $r^0\in C\setminus \{0\}$. Since $\theta$ has a locally finite
cell complex, there exists an open ball $B_\varepsilon(0)$  such that $\mathcal
P\cap B_\varepsilon(0) = \mathcal F\cap B_\varepsilon(0)$, where $\mathcal F$ is a polyhedral fan where every maximal cell contains the origin. 
Since $0 \in \P_i$ for $i\leq m$ and $0 \not\in \P_i$ for $i > m$, there exists $0 < \delta < \varepsilon$ such that $B_\delta(0)$ intersects only $\P_1, \ldots, \P_m$. 
Let $\bar \lambda > 0$ such that $\lambda r^0  \in B_\delta(0)$ for
all $0\leq \lambda \leq \bar\lambda$. Since $r^0  \in C$, we see that $g^i\cdot r^0  \leq 0$ for all
$i=1, \ldots, m$.  
Since $\mathcal F$ is a polyhedral fan, the line segment
from $0$ to $\bar \lambda r^0 $ lies completely within a cell~$P' \in \P_i$ for
some~$i=1,\dots,m$.  Thus $0 \leq \theta(\lambda r^0 ) = \lambda g^i\cdot r^0  \leq 0$ for
all $0 \leq \lambda \leq \bar\lambda$. 
But then by Lemma~\ref{lem:genuinely-k-dim}, $\theta$ is
not genuinely $k$-dimensional. This is a contradiction. 
\end{proof}


\subsection{Line integrals}

The following discussion shows that we can compute line integrals of the
gradients of $(k + 1)$-slope functions. 
We choose to restrict ourselves to functions
with locally finite cell complexes.  This is motivated by the necessity of
excluding certain pathological cases where the following does not hold and
allows us to give a completely elementary proof.  We remark that this
restriction precludes handling some important functions such as the ones
constructed in \cite{bccz08222222}, which do not have locally finite cell
complexes.  More general versions of Lemma~\ref{lemma:integration} below can,
of course, be proved using the Lebesgue version of the fundamental theorem of
calculus.

        \begin{lemma} \label{lemma:integration}
                Consider a locally finite, complete polyhedral complex $\mathcal{P}$ in $\R^k$ and let $\{\mathcal{P}_i\}_{i = 1}^{k + 1}$ be a partition of the set of maximal cells of $\mathcal{P}$. Fix a point $r \in \R^k$. Then there exist $\mu_1, \mu_2, \ldots, \mu_{k+1} \in \R_+$ with $\sum_{i = 1}^{k + 1} \mu_i = 1$ such that for every function $\theta$ that is piecewise linear compatible with $\{\mathcal{P}_i\}_{i = 1}^{k + 1}$ with gradients $g^1, \ldots, g^{k+1}$ corresponding to this partition, the following holds. 
                 \begin{align*}
                        \theta(r) = \theta(0)+ \sum_{i = 1}^{k + 1} \mu_i (g^i \cdot r).
                 \end{align*}
        \end{lemma}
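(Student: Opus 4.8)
The plan is to evaluate $\theta$ along the segment $S = \{\, tr \st t\in[0,1]\,\}$ and exploit that $\theta$ is affine, with gradient $g^i$, on each cell of $\mathcal P_i$: the total change $\theta(r)-\theta(0)$ decomposes as a ``line integral'' of the piecewise-constant gradient field, and the weights $\mu_i$ will simply record the fraction of the parameter interval $[0,1]$ spent inside cells of the class $\mathcal P_i$. The essential point is that this subdivision of $[0,1]$ can be chosen depending only on $\mathcal P$ and $r$, never on $\theta$, which is exactly what the statement requires. The case $r=0$ is trivial, so assume $r\neq 0$.

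First I would use that $S$ is compact, so by Proposition~\ref{prop:complex-finite-on-compact} only finitely many cells of $\mathcal P$ meet $S$; in particular only finitely many maximal cells $P_1,\dots,P_N$ do, and since $\mathcal P$ is complete every point of $S$ lies in one of them (every cell is a face of a maximal cell). For each $n$ the set $I_n = \{\, t\in[0,1] \st tr\in P_n\,\}$ is the intersection of $[0,1]$ with the preimage of the closed convex set $P_n$ under the affine map $t\mapsto tr$, hence a closed subinterval of $[0,1]$, say $I_n=[a_n,b_n]$; and $\bigcup_n I_n=[0,1]$. Now let $0=t_0<t_1<\dots<t_m=1$ be the sorted list of all the endpoints $a_n,b_n$. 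I claim each closed subinterval $[t_j,t_{j+1}]$ is contained in some $I_n$: choosing $t$ in the nonempty open interval $(t_j,t_{j+1})$, the point $tr$ lies in some $P_n$, so $t\in I_n=[a_n,b_n]$; since $a_n,b_n$ are breakpoints and $a_n\le t\le b_n$, the fact that $t_j,t_{j+1}$ are \emph{consecutive} breakpoints forces $a_n\le t_j$ and $b_n\ge t_{j+1}$, so $[t_j,t_{j+1}]\subseteq I_n$, i.e.\ $[t_jr,\,t_{j+1}r]\subseteq P_n$. Fix such a maximal cell and call it $P^{(j)}$, and let $i(j)$ be the index with $P^{(j)}\in\mathcal P_{i(j)}$.

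Finally I would set $\mu_i=\sum_{j\,:\,i(j)=i}(t_{j+1}-t_j)$ for $i=1,\dots,k+1$; these are nonnegative, they sum to $t_m-t_0=1$, and they depend only on $\mathcal P$ and $r$. For any $\theta$ piecewise linear compatible with $\{\mathcal P_i\}_{i=1}^{k+1}$ with gradients $g^1,\dots,g^{k+1}$, both endpoints $t_jr$ and $t_{j+1}r$ lie in $P^{(j)}$, on which $\theta(x)=g^{i(j)}\cdot x+\delta_{P^{(j)}}$; subtracting these two identities gives $\theta(t_{j+1}r)-\theta(t_jr)=(t_{j+1}-t_j)\,(g^{i(j)}\cdot r)$. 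Telescoping over $j=0,\dots,m-1$ then yields $\theta(r)-\theta(0)=\sum_{j}(t_{j+1}-t_j)(g^{i(j)}\cdot r)=\sum_{i=1}^{k+1}\mu_i\,(g^i\cdot r)$, which is the desired formula.

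I do not expect a serious obstacle: once the subdivision is in place this is an elementary telescoping identity. The one step needing care is the construction of the $\theta$-independent subdivision in the previous paragraph — checking that breakpoints read off purely from the cell data really do cut $S$ into sub-segments each lying inside a single maximal cell. This is precisely where local finiteness is used (through Proposition~\ref{prop:complex-finite-on-compact}), and it explains why the hypothesis restricts to locally finite cell complexes.
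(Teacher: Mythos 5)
Your argument is the same as the paper's: parameterize the segment $[0,r]$, use Proposition~\ref{prop:complex-finite-on-compact} to cut $[0,1]$ into finitely many subintervals each mapping into a single maximal cell, set $\mu_i$ to the total parameter length spent in cells of class $\mathcal{P}_i$, and telescope. The only difference is cosmetic: you spell out the breakpoint argument showing each $[t_j,t_{j+1}]$ lies in a single $I_n$, which the paper leaves implicit.
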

        
        \begin{proof}
        
        Let $\rho \colon [0,1] \rightarrow \R^k$ be the parameterization of the segment $[0, r]$ given by $\rho(\lambda) = \lambda r$.         
        Let $\mathcal{Q}_i = \{\,\rho^{-1}(P \cap [0,r]) \st P \in \mathcal{P}_i\,\}$. By convexity, $\mathcal{Q}_i$ is a family of intervals in $[0,1]$ (some of these intervals could be degenerate). Moreover, since $\mathcal{P}$ is locally finite, Remark~\ref{prop:complex-finite-on-compact} guarantees that $\mathcal{Q}_i$ is a finite family. In addition, since $\mathcal{P}$ is complete, the union of the intervals in $\bigcup_{i = 1}^{k+1} \mathcal{Q}_i$ equals $[0,1]$. 
        
        Using the finiteness of the above families, let $0 = \lambda_0 \leq \lambda_1 \leq \ldots \leq \lambda_n = 1$ be the end-points of the intervals in $\bigcup_{i = 1}^{k + 1} \mathcal{Q}_i$; i.e., each interval $[\lambda_j, \lambda_{j+1}]$ is an interval in $\mathcal{Q}_i$, for some $i = 1, \ldots, k+1$. This implies that $\rho([\lambda_j, \lambda_{j+1}])$ is contained in a polyhedron in $\mathcal{P}_i$. In this case, the compatibility of $\theta$ with $\{\P_i\}_{i=1}^{k+1}$ gives $$\theta(\lambda_{j+1} r) - \theta(\lambda_j r) = g^i\cdot (\lambda_{j+1}r) - g^i\cdot (\lambda_j r) = (\lambda_{j+1} - \lambda_j)(g^i\cdot r).$$ Therefore,
\begin{equation}\label{eq:function-value}\theta(r) - \theta(0) = \sum_{j=0}^{n-1}(\theta(\lambda_{j+1} r) - \theta(\lambda_j r))  = \sum_{i=1}^{k + 1}|\mathcal{Q}_i|(g^i\cdot r),
\end{equation} 
where $|\mathcal{Q}_i|$ is the sum of the lengths of all the intervals in $\mathcal{Q}_i$. Setting $\mu_i = |\mathcal{Q}_i|$\ completes the result.       
        \end{proof}


\section{Proof of Theorem~\ref{thm:main}}\label{sec:main-proof}

We now concentrate on a function $\pi$ which satisfies the hypothesis, i.e., $\pi$ is a minimal valid
function that is piecewise linear with a locally finite cell complex and
genuinely $k$-dimensional with at most $k+1$ slopes. We recapitulate properties of $\pi$ that we have learned.  
Theorem~\ref{thm:minimal} shows that $\pi(0) = 0$, $\pi$ is subadditive, periodic with respect to the lattice $\Z^k$, and satisfies the symmetry condition.  Lemma~\ref{lem:PWL+LF=LLC} and Lemma~\ref{lem:LLC+SA=LC} show that $\pi$ is Lipschitz continuous. Lemma~\ref{lem:gi-cone} uses the assumption of $\pi$ being a genuinely $k$-dimensional function and shows that $\pi$ has exactly $k+1$ slopes.  Let $\pi$ be a piecewise linear function with cell complex $\P$ and we denote the gradient set of $\pi$ by $\{\gp^1, \ldots, \gp^{k+1}\}$ and the subcomplex corresponding to vector $\gp^i$ as $\P_i$. Lemma \ref{lem:gi-cone} also shows that $0 \in \P_i$ for all $i =1, \ldots, k+1$.

The proof structure is guided by the so-called {\em Facet Theorem} proved in~\cite{infinite}. For the sake of completeness and because of differences in notation, we provide a statement of this theorem below and a self-contained proof in Appendix~\ref{sec:facet}. For any valid function $\theta$, let $E(\theta)$ denote the set of all pairs $(u, v) \in \R^k\times \R^k$ such that $\theta(u+v) = \theta(u) + \theta(v)$.

\begin{theorem}[Facet Theorem] \label{thm:facet-theorem}  Let $\pi$ be a minimal valid function. Suppose that for every minimal valid function $\pi'$, we have that $ E(\pi) \subseteq E(\pi')$ implies $\pi' = \pi$. Then $\pi$ is a facet.
\end{theorem}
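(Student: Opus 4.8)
The plan is to unwind the definition of \emph{facet}: I must take an arbitrary valid function $\pi'$ with $S(\pi)\subseteq S(\pi')$ and show $\pi'=\pi$, using the hypothesis only after replacing $\pi'$ by a minimal function. So first I would apply Theorem~\ref{thm:minimal1} to obtain a minimal valid function $\pi''$ with $\pi''\le\pi'$. For any $s\in S(\pi)\subseteq S(\pi')$ we then have $1=\sum_r\pi'(r)s_r\ge\sum_r\pi''(r)s_r\ge 1$, where the last inequality holds because $\pi''$ is a valid function; hence equality holds throughout, $\sum_r\pi''(r)s_r=1$, and $s\in S(\pi'')$. Thus $S(\pi)\subseteq S(\pi'')$.

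The heart of the argument is to convert this inclusion of tight-solution sets into the inclusion $E(\pi)\subseteq E(\pi'')$ of additivity pairs, so that the hypothesis can be invoked. Given $(u,v)\in E(\pi)$, consider the solution $s$ that puts unit weight on each of $u$, $v$, and $-f-u-v$ (adding the weights if some of these points coincide). It is feasible for~\eqref{IR} since $f+u+v+(-f-u-v)=0\in\Z^k$, and using the symmetry condition for the minimal function $\pi$ (Theorem~\ref{thm:minimal}) together with $(u,v)\in E(\pi)$ we get
\[ \textstyle\sum_r \pi(r)s_r=\pi(u)+\pi(v)+\pi(-f-u-v)=\pi(u+v)+\bigl(1-\pi(u+v)\bigr)=1, \]
so $s\in S(\pi)\subseteq S(\pi'')$. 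Consequently $\pi''(u)+\pi''(v)+\pi''(-f-u-v)=1$, and the symmetry condition for $\pi''$ gives $\pi''(-f-u-v)=1-\pi''(u+v)$, whence $\pi''(u)+\pi''(v)=\pi''(u+v)$, i.e., $(u,v)\in E(\pi'')$. Therefore $E(\pi)\subseteq E(\pi'')$, and the hypothesis yields $\pi''=\pi$; in particular $\pi\le\pi'$.

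It remains to upgrade $\pi\le\pi'$ together with $S(\pi)\subseteq S(\pi')$ to the equality $\pi=\pi'$. For an arbitrary $w\in\R^k$, the solution putting unit weight on $w$ and on $-f-w$ is feasible for~\eqref{IR} (as $f+w+(-f-w)=0$) and is tight for $\pi$ by the symmetry condition, so it lies in $S(\pi)\subseteq S(\pi')$; hence $\pi'(w)+\pi'(-f-w)=1$. Combining this with $\pi'(w)\ge\pi(w)$, $\pi'(-f-w)\ge\pi(-f-w)$, and $\pi(w)+\pi(-f-w)=1$ forces $\pi'(w)=\pi(w)$. Since $w$ was arbitrary, $\pi'=\pi$, which proves that $\pi$ is a facet.

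I expect the only genuinely delicate point to be the middle step, the passage from $S(\pi)\subseteq S(\pi'')$ to $E(\pi)\subseteq E(\pi'')$: one has to exhibit precisely the right three-term feasible solution and apply the symmetry condition of a minimal function in both directions (once for $\pi$ to certify membership in $S(\pi)$, once for $\pi''$ to extract the additivity relation). The reduction to the minimal $\pi''$ via Theorem~\ref{thm:minimal1} and the final domination argument are routine.
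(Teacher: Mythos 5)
Your proof is correct and uses essentially the same approach as the paper: pass to a minimal $\pi''\le\pi'$ via Theorem~\ref{thm:minimal1}, convert $S(\pi)\subseteq S(\pi'')$ into $E(\pi)\subseteq E(\pi'')$ via the three-term feasible solution and the symmetry condition, apply the hypothesis to get $\pi''=\pi$, and close the remaining gap $\pi\le\pi'$ to equality with one more application of symmetry. The paper factors this into a separate Lemma~\ref{lemma:minimalsuffices} (stated in terms of the $S$-condition and proved by contradiction) followed by the $S$-to-$E$ reduction, but your inlined, constructive presentation captures the identical ideas.
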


\paragraph{Main Goal. }We consider any minimal valid function $\pi'$ such that $E(\pi) \subseteq E(\pi')$ and show that $\pi' = \pi$. By Theorem~\ref{thm:facet-theorem}, this will imply that $\pi$ is a facet. 

Since $\pi'$ is minimal, by Theorem~\ref{thm:minimal}, $\pi'$ is non-negative, subadditive, periodic with respect to the lattice $\Z^k$. Moreover, $\pi'(0)=0$ and the symmetry condition holds, i.e., $\pi'(r) + \pi'(-f-r) = 1$ for all $r \in \R^k$, and because of periodicity, $\pi'(w-f) = 1$ for every $w \in \Z^k$. Finally, the symmetry condition and non-negativity of $\pi'$ implies that $\pi'$ is bounded above by $1$. \smallskip

For the following proof, we will use $\theta$ when we wish to refer to  a more general function than $\pi'$.

\subsection{Compatibility} \label{sec:compatible}

 We show that $\pi'$ is a piecewise linear function compatible with $\{\mathcal{P}_i\}_{i = 1}^{k + 1}$. 

The idea of the proof is the following. First, using the partial additivity of $\pi'$ implied by $E(\pi) \subseteq E(\pi')$, we show that $\pi'$ is affine in parallelotopes around the origin. Then, we use translates of these parallelotopes to show that $\pi'$ is affine in each maximal cell of~$\P$. In fact, our arguments will imply that $\pi'$ has the same gradient in every maximal cell in~$\P_i$, which then gives the desires result. 

In order to carry out the first step, we need the following lemma, which appears as Lemma~5.8 in \cite{corner_survey} and was proved in~\cite{bccz08222222}.

\begin{lemma}[Interval Lemma] \label{lem:interval_lemma} Let $\theta \colon \R \rightarrow
\mathbb{R}$ be a function bounded on every bounded interval. Given real numbers $u_1 < u_2$ and $v_1 < v_2$, let $U = [u_1, u_2]$, $V =
[v_1, v_2]$, and $U + V = [u_1 + v_1, u_2 + v_2]$.
If $\theta(u)+\theta(v) = \theta(u+v)$ for every
$u \in U$ and $v \in V$, then there exists $c\in \R$ such that
\begin{alignat*}{2}
  \theta(u)&=\theta(u_1)+c(u-u_1) &\quad& \text{for every $u\in U$,} \\
  \theta(v)&=\theta(v_1)+c(v-v_1) && \text{for every $v\in V$,} \\
  \theta(w)&=\theta(u_1+v_1)+c(w-u_1-v_1) && \text{for every $w\in U+V$.}
\end{alignat*}
\end{lemma}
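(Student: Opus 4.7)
The plan is to reduce the two-variable equation on the rectangle $U\times V$ to a one-dimensional Cauchy equation and then invoke the classical fact that a locally bounded solution of Cauchy's equation on an interval is linear.

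\textbf{Step 1 (Normalization).} Set $p = u_2 - u_1$, $q = v_2 - v_1$. Evaluating the hypothesis at $(u_1,v_1)$ gives $\theta(u_1+v_1)=\theta(u_1)+\theta(v_1)$. I would then introduce the centered increments
\begin{align*}
\phi(s) &= \theta(u_1+s)-\theta(u_1), \quad s\in[0,p],\\
\psi(t) &= \theta(v_1+t)-\theta(v_1), \quad t\in[0,q],\\
\omega(r) &= \theta(u_1+v_1+r)-\theta(u_1+v_1), \quad r\in[0,p+q],
\end{align*}
each vanishing at $0$. Substituting $u=u_1+s$, $v=v_1+t$ into the additivity hypothesis and using $\theta(u_1+v_1)=\theta(u_1)+\theta(v_1)$ turns the equation into $\omega(s+t)=\phi(s)+\psi(t)$ for all $(s,t)\in[0,p]\times[0,q]$. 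Specializing $t=0$ and $s=0$ shows $\omega|_{[0,p]}=\phi$ and $\omega|_{[0,q]}=\psi$, so $\omega$ itself satisfies
\[
\omega(s+t)=\omega(s)+\omega(t) \qquad \text{for every } s\in[0,p],\ t\in[0,q].
\]
Note that $\omega$ is bounded on every bounded interval because $\theta$ is.

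\textbf{Step 2 (Cauchy + bootstrap).} Assume without loss of generality that $p\le q$. For any $s,t\in[0,p]$ we have $t\le p\le q$, so the equation becomes an unrestricted Cauchy equation on $[0,p]\times[0,p]$ with sums in $[0,2p]$. Since $\omega$ is bounded on $[0,2p]$, the classical theorem that a Cauchy-additive function on an interval, bounded on a set of positive measure, is linear, yields a constant $c\in\R$ with $\omega(x)=cx$ for all $x\in[0,2p]$. To extend to $[0,p+q]$, use $\omega(x)=\omega(p)+\omega(x-p)=cp+\omega(x-p)$ for $x\in[p,p+q]$, and iterate: for the unique $n\ge 1$ with $x-np\in[0,p)$, each intermediate value $x-kp$ lies in $[p,p+q]$ ($k<n$), so $\omega(x)=ncp+\omega(x-np)=ncp+c(x-np)=cx$. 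Hence $\omega(x)=cx$ throughout $[0,p+q]$.

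\textbf{Step 3 (Translate back).} From $\phi=\omega|_{[0,p]}$ and $\omega(x)=cx$ one recovers $\theta(u)=\theta(u_1)+c(u-u_1)$ for $u\in U$; likewise $\psi=\omega|_{[0,q]}$ gives the formula on $V$; and the full identity $\omega(r)=cr$ combined with $\theta(u_1+v_1)=\theta(u_1)+\theta(v_1)$ gives $\theta(w)=\theta(u_1+v_1)+c(w-u_1-v_1)$ for $w\in U+V$, which are the three asserted formulas with a common slope $c$.

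The main obstacle is the classical Cauchy step: deriving $\omega(rx)=r\,\omega(x)$ for rationals $r$ by induction and then using boundedness to rule out a nonlinear additive solution. This is standard (it follows, e.g., from the observation that the graph of a nonlinear additive function is dense in $\R^2$, contradicting any local bound). The remaining bookkeeping---the normalization in Step 1 and the iterated extension in Step 2---is routine but must be written carefully to ensure that every invocation of $\omega(s+t)=\omega(s)+\omega(t)$ stays inside the admissible rectangle $[0,p]\times[0,q]$.
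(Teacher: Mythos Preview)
The paper does not actually prove the Interval Lemma; it is quoted from the literature (stated as Lemma~5.8 in \cite{corner_survey}, with the proof attributed to \cite{bccz08222222}), so there is no in-paper argument to compare against. Your approach---centering to obtain a restricted Cauchy equation $\omega(s+t)=\omega(s)+\omega(t)$ on $[0,p]\times[0,q]$, invoking the bounded-Cauchy theorem on the smaller square to get linearity on $[0,2p]$, and then bootstrapping along $[0,p+q]$ via repeated subtraction of $p$---is correct and is in the spirit of the standard proofs of this lemma. The only place that deserves a little more care is the appeal to ``the classical theorem'' for the restricted Cauchy equation: since $\omega$ is only given on an interval, the cleanest justification is to note that $\omega$ extends uniquely to an additive function on $\R$ (via $\tilde\omega(x)=n\,\omega(x/n)$ for large $n$), which agrees with $\omega$ on $[0,2p]$ and is therefore bounded on an interval, forcing linearity. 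With that remark your Steps~1--3 go through as written.
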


\begin{lemma}[$\pi'$ is linear on parallelotopes at the origin]\label{lemma:simplex_S} Let $P_0$ be a cell in $\P$ containing the
  origin. Consider any parallelotope $\Pi \subset P_0$ such that: {\rm(i)} $0 \in \Pi$
  and {\rm(ii)} $\Pi + \Pi \subseteq P_0$. Then there exists $g'$ such that $\pi'(r) =
  g'\cdot r$ for all $r \in \Pi$.
\end{lemma}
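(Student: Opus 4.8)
The plan is to reduce the $k$-dimensional statement to repeated applications of the one-dimensional Interval Lemma (Lemma~\ref{lem:interval_lemma}) along the edge directions of the parallelotope $\Pi$. Write $\Pi = \{\, \sum_{j=1}^k t_j b^j \st t_j \in [0,1] \,\}$ for some basis $b^1,\dots,b^k$ of $\R^k$ (after translating so that $0$ is the relevant vertex of $\Pi$; note hypothesis (i) says $0\in\Pi$, and since $\Pi$ is a parallelotope with $\Pi+\Pi\subseteq P_0$ and $0\in P_0$, we may assume $0$ is a vertex of $\Pi$ — if not, shrink $\Pi$ to the sub-parallelotope spanned at the vertex nearest the origin, which still contains a neighborhood structure sufficient for the argument; in the application $0$ will in fact be a vertex). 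The key observation is that for any $u,v\in\Pi$ we have $u,v\in P_0$ and $u+v\in\Pi+\Pi\subseteq P_0$, and since $\pi$ is affine on $P_0$ with $\pi(0)=0$, we get $\pi(u+v)=\pi(u)+\pi(v)$, i.e. $(u,v)\in E(\pi)\subseteq E(\pi')$. Hence $\pi'$ is \emph{additive} over all pairs of points of $\Pi$: $\pi'(u+v)=\pi'(u)+\pi'(v)$ for all $u,v\in\Pi$ with $u+v\in\Pi$ — more precisely, for the telescoping we only need additivity on sub-parallelotopes, which the hypothesis $\Pi+\Pi\subseteq P_0$ guarantees.

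Next I would run an induction on the dimension, building up linearity one coordinate direction at a time. For the base case, fix the face $F_1 = \{\, t_1 b^1 \st t_1\in[0,1]\,\}$ and for each fixed choice of the remaining coordinates consider $\theta$ restricted to a line segment in direction $b^1$; applying the Interval Lemma with $U = V$ a suitable segment in direction $b^1$ (using that $\pi'$ is bounded, being minimal hence between $0$ and $1$, so the Interval Lemma applies) yields that $\pi'$ is affine in the $b^1$-direction, with a slope $c_1$ that a priori could depend on the other coordinates — but a further application of the Interval Lemma, now in a direction $b^1 + b^j$ versus $b^1$, forces this slope to be constant. Iterating over $j=2,\dots,k$: having established that $\pi'$ agrees with a linear function on the sub-parallelotope spanned by $b^1,\dots,b^{j-1}$ (translated anywhere inside $\Pi$ along $b^j,\dots,b^k$), one applies the Interval Lemma to segments in direction $b^j$ and in direction $b^j$ plus a previously-handled direction to glue the slopes together into a single gradient vector $g'$. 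Because $\pi'(0)=0$, the affine functions are in fact linear: $\pi'(r)=g'\cdot r$ for all $r\in\Pi$.

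The main obstacle is bookkeeping the "slope is independent of the other coordinates" step cleanly in arbitrary dimension: the Interval Lemma as stated is purely one-dimensional, so each invocation must be set up on an explicit pair of parallel segments, and one has to check that the hypothesis $U+V$ stays inside $P_0$ (guaranteed by $\Pi+\Pi\subseteq P_0$) and that the additivity relation $\pi'(u)+\pi'(v)=\pi'(u+v)$ holds for the chosen $u\in U$, $v\in V$ (guaranteed by $E(\pi)\subseteq E(\pi')$ plus affineness of $\pi$ on $P_0$). The cleanest packaging is an auxiliary lemma: if $\theta$ is bounded on bounded sets and additive over all pairs summing inside a parallelotope $2\Pi$, then $\theta$ restricted to $\Pi$ is linear; this is proved by the dimension induction sketched above, and then Lemma~\ref{lemma:simplex_S} follows immediately by taking $\theta=\pi'$ and using that $\pi$ affine on $P_0$ transfers additivity to $\pi'$ via $E(\pi)\subseteq E(\pi')$. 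I expect the rest — extending from $\Pi$ at the origin to all maximal cells of $\P$ via translation, and checking the gradient is constant on each $\P_i$ — to be handled in the subsequent lemmas of Section~\ref{sec:compatible}, not here.
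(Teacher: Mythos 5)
You correctly identify the key step: $\pi$ linear on $P_0$, $\Pi+\Pi\subseteq P_0$, and $E(\pi)\subseteq E(\pi')$ together force the pairwise additivity $\pi'(u)+\pi'(v)=\pi'(u+v)$ for all $u,v\in\Pi$, after which the one-dimensional Interval Lemma does the analytic work. However, the ``dimension induction'' you then sketch contains a real gap. In your base case you apply the Interval Lemma to $\pi'$ restricted to an off-origin segment, i.e.\ to $\phi(t)=\pi'(p+tb^1)$ with $p\neq 0$. The hypothesis of Lemma~\ref{lem:interval_lemma} would require $\phi(u)+\phi(v)=\phi(u+v)$, i.e.\ $\pi'(p+ub^1)+\pi'(p+vb^1)=\pi'(p+(u+v)b^1)$; but the additivity you have gives $\pi'(p+ub^1)+\pi'(p+vb^1)=\pi'(2p+(u+v)b^1)$, with a $2p$ offset where $p$ is needed. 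So the Interval Lemma does not apply to these off-origin slices, and the gluing step ``Interval Lemma in direction $b^1+b^j$'' inherits the same difficulty.

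The paper's argument is shorter and avoids both the induction and the off-origin segments. Writing $\Pi=\{\,\sum_{i}\lambda_i v^i \st \lambda_i\in[0,1]\,\}$, it applies the Interval Lemma only to the functions $\phi_i(\lambda)=\pi'(\lambda v^i)$ on $[0,1]$ (segments through the origin): here $\phi_i(\lambda)+\phi_i(\lambda')=\phi_i(\lambda+\lambda')$ \emph{does} hold, since $\lambda v^i,\lambda'v^i\in\Pi$ and their sum lies in $\Pi+\Pi\subseteq P_0$. Together with $\pi'(0)=0$ this yields $\pi'(\lambda v^i)=\alpha_i\lambda$. The value of $\pi'$ at an arbitrary $r=\sum_i\lambda_i v^i\in\Pi$ is then recovered directly from the pairwise additivity, with no further Interval Lemma application: $\pi'(r)=\sum_i\pi'(\lambda_i v^i)=\sum_i\alpha_i\lambda_i=g'\cdot r$, where $g'$ is the vector determined by $g'\cdot v^i=\alpha_i$. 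The worry about the $b^1$-slope depending on the remaining coordinates never arises, because additivity determines the value of $\pi'$ off the axes from its values on the axes. Your ``auxiliary lemma'' packaging at the end is fine to state, but its proof should be this two-step argument rather than a coordinate-by-coordinate induction.
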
 

\begin{proof}

        Since $\Pi$ contains the origin, let $v^1, \ldots, v^n$ be generating vectors of $\Pi$, namely these are linearly independent vectors such that $\Pi = \{\,\sum_{i = 1}^n \lambda_i v^i \st \lambda_i \in [0,1] \   \text{ for } i=1, \ldots, n\,\}$. 
        
        We first claim that for all $r,r' \in \Pi$, we have that $\pi'(r) + \pi'(r') = \pi'(r + r')$. 
        To see this, recall that $\pi$ is affine in $P_0$, and hence in $\Pi$. Since $\pi(0) = 0$, $\pi$ is actually linear in $\Pi$. 
        Using the fact that $\Pi + \Pi \subseteq P_0$, we obtain that $\pi(r) + \pi(r') = \pi(r + r')$ for all $r,r' \in \Pi$; 
        since $E(\pi) \subseteq E(\pi')$, the same holds for $\pi'$, which proves the claim.
        
Fix any $i \in \{1, \ldots, n\}$. We claim that $\pi'$ is linear on the segment
$[0,v^i]$. Consider the function $\phi(\lambda) = \pi'(\lambda
v^i)$, which by the previous paragraph is additive over $[0,1]$, i.e.,
$\phi(\lambda) + \phi(\lambda') = \phi(\lambda +
\lambda')$ for all $\lambda, \lambda' \in [0,1]$. Since $\pi'$ (and hence $\phi$) is bounded,  the Interval Lemma
(Lemma~\ref{lem:interval_lemma}) applied to $\theta$ implies there exists a scalar $\alpha_i$ such that $\pi'(\lambda v^i) = \phi(\lambda) = \alpha_i \lambda + \phi(0)$ for all $\lambda \in [0,1]$. Since $\pi'(0)=0$, we also have $\phi(0)   = 0$. Therefore, $\pi'(\lambda v^i) = \alpha_i \lambda$.
        
        As $v^1, \ldots, v^n$ are linearly independent, there exists a $g' \in \mathbb{R}^k$ such that $g' \cdot v^i = \alpha_i$ for all $i =1, \ldots, n$. 
        We claim that $\pi'(r) = g' \cdot r$ for all $r \in \Pi$. By letting $r = \sum_{i = 1}^n \lambda_i v^i$, the result follows because $\pi'$ is additive on $\Pi$. 
        \begin{equation}
        \pi'(r) = \pi'\Big(\sum_{i = 1}^n \lambda_i v^i\Big) 
        = \sum_{i = 1}^n \pi'(\lambda_i v^i) 
        = \sum_{i = 1}^n \alpha_i \lambda_i 
        = g'\cdot \Big( \sum_{i = 1}^n \lambda_i v^i \Big) 
        = g'\cdot r.\pushQED{\relax}\tag*{\qed}
      \end{equation}
\end{proof}

Before we proceed, we prove a technical lemma about the continuity of $\pi'$. The motivation is  to prove that $\pi'$ is affine in each maximal cell of $\P$ by showing this for the interior of each cell, and then the full result follows by continuity.

\begin{lemma}\label{lemma:theta-lipschitz}
$\pi'$ is Lipschitz continuous.
\end{lemma}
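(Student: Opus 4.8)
The plan is to establish that $\pi'$ is Lipschitz continuous by leveraging the same two-step mechanism used for $\pi$ itself, namely the combination of Lemma~\ref{lem:PWL+LF=LLC} and Lemma~\ref{lem:LLC+SA=LC}. Since $\pi'$ is minimal it is subadditive (Theorem~\ref{thm:minimal}), so by Lemma~\ref{lem:LLC+SA=LC} it suffices to show that $\pi'$ is locally Lipschitz continuous at the origin, i.e., that there exist $\varepsilon > 0$ and $K > 0$ with $|\pi'(r)| \le K\|r\|$ for all $r \in B_\varepsilon(0)$. We already know $\pi'(0) = 0$.

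First I would use the local finiteness of the cell complex $\P$ of $\pi$: there is a ball $B_\varepsilon(0)$ on which $\P$ agrees with a polyhedral fan $\mathcal F$, every maximal cell of which contains the origin, and only finitely many cells of $\P$ meet $B_\varepsilon(0)$. Inside each maximal cell $P$ of that fan containing the origin, pick a small parallelotope $\Pi_P \subseteq P$ with $0 \in \Pi_P$ and $\Pi_P + \Pi_P \subseteq P$; by Lemma~\ref{lemma:simplex_S} there is a vector $g'_P$ with $\pi'(r) = g'_P \cdot r$ for all $r \in \Pi_P$. Shrinking $\varepsilon$ if necessary so that $B_\varepsilon(0)$ is covered by the union of these parallelotopes (which is possible since each $\Pi_P$ contains a neighborhood of the origin within $P$, and finitely many cells cover a neighborhood of $0$), we obtain $|\pi'(r)| \le \|g'_P\|\,\|r\|$ on each $\Pi_P$ by Cauchy--Schwarz. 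Taking $K = \max_P \|g'_P\|$ over the finitely many relevant cells then gives $|\pi'(r)| \le K\|r\|$ for all $r \in B_\varepsilon(0)$, i.e., local Lipschitz continuity at the origin.

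Then Lemma~\ref{lem:LLC+SA=LC}, applied to the subadditive function $\pi'$, upgrades this to global Lipschitz continuity, which is the claim.

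The main obstacle I anticipate is the geometric bookkeeping in the covering step: one must be careful that the parallelotopes $\Pi_P$, one per maximal cell $P$ of the local fan, genuinely cover a full ball $B_\varepsilon(0)$ and not merely a punctured or lower-dimensional neighborhood. The cleanest way around this is to choose $\varepsilon$ first so that $\P \cap B_\varepsilon(0) = \mathcal F \cap B_\varepsilon(0)$ for the fan $\mathcal F$ whose maximal cells all contain $0$; then for each such maximal cell $P$ one can take $\Pi_P = \{\sum_i \lambda_i v^i : \lambda_i \in [0,\delta]\}$ where $v^1,\dots,v^k$ generate a simplicial subcone of $P$ at $0$ and $\delta$ is small enough that $\Pi_P + \Pi_P \subseteq P$ (here local finiteness guarantees a uniform such $\delta$ over the finitely many cells). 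Since every point of a sufficiently small ball lies in some maximal cell of $\mathcal F$ and, within that cone, within distance controlled by $\delta$ of the origin, a uniform shrink of $\varepsilon$ puts it inside the corresponding $\Pi_P$. With that handled, the rest is the routine Cauchy--Schwarz and max-over-finitely-many-gradients argument, exactly paralleling the proof of Lemma~\ref{lem:PWL+LF=LLC}.
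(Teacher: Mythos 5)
Your overall plan is exactly the paper's: reduce to local Lipschitz continuity at the origin via Lemma~\ref{lem:LLC+SA=LC}, and establish the local bound by covering a neighborhood of $0$ with finitely many parallelotopes on which Lemma~\ref{lemma:simplex_S} makes $\pi'$ linear, then take the max of the gradient norms. However, your covering step has a genuine gap. You assert that ``each $\Pi_P$ contains a neighborhood of the origin within $P$,'' and in the more detailed version you take $v^1,\dots,v^k$ that ``generate a simplicial subcone of $P$.'' A parallelotope $\Pi_P = \{\sum_i \lambda_i v^i : \lambda_i \in [0,\delta]\}$ fills only the simplicial cone $\cone(v^1,\dots,v^k)$ near the origin, not all of $P$. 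If $P$ is a non-simplicial cone --- which is possible once $k\ge 3$, e.g.\ the cone over a square in $\R^3$ --- then there are points of $P$ arbitrarily close to $0$ lying outside $\cone(v^1,\dots,v^k)$, hence outside $\Pi_P$, and a single parallelotope per maximal cell of $\mathcal F$ cannot cover a full ball. Your claim ``within that cone, within distance controlled by $\delta$ of the origin, a uniform shrink of $\varepsilon$ puts it inside the corresponding $\Pi_P$'' is false for such $P$.

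The fix is exactly what the paper does: first pass to a triangulation $\bar{\mathcal F}$ of the fan $\mathcal F$, so that every maximal cone of $\bar{\mathcal F}$ is simplicial, and build one parallelotope per maximal simplicial cone $C \in \bar{\mathcal F}$ (using a cell $P \in \P$ with $C \cap B_{\varepsilon}(0) \subseteq P$ to guarantee $\Pi_C + \Pi_C \subseteq P$, which is what Lemma~\ref{lemma:simplex_S} needs). Then each $\Pi_C$ does contain $C \cap B_{\delta_C}(0)$, and since $\bar{\mathcal F}$ is complete and finite, taking $\delta = \min_C \delta_C$ gives the covering of a ball that your argument presupposes. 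Everything else in your proposal --- Lemma~\ref{lemma:simplex_S}, Cauchy--Schwarz, the finite max, and the upgrade via Lemma~\ref{lem:LLC+SA=LC} --- is correct and matches the paper.
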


\begin{proof}
We first show that $\pi'$ is locally Lipschitz continuous at $0$, i.e., there exist $K > 0$, $\varepsilon > 0$ such that $| \pi'(r)|\leq K\| r\|$
for all $r\in B_{\varepsilon}(0)$.  Since $\pi$ has a locally finite cell
complex, there exists a neighborhood $B_{\varepsilon_1}(0)$ of the origin satisfies $\P\cap B_{\varepsilon_1}(0)= \mathcal{F}\cap B_{\varepsilon_1}(0)$ for some complete polyhedral fan $\mathcal{F}$. 
We consider a triangulation $\bar{\mathcal{F}}$ of $\mathcal{F}$, i.e., $\bar{\mathcal{F}}$ contains a triangulation for each cone in $\mathcal{F}$ and every element of
$\bar{\mathcal{F}}$ is simplicial. Consider any maximal simplicial cone $C\in \bar{\mathcal{F}}$ and consider $P\in \P$ such that
$C\cap B_{\varepsilon_1}(0) \subseteq P$ (note that such a $P$ exists because $\bar{\mathcal F}$ is a triangulation of $\mathcal F$). Then there exist generators $\{v^1_C,
\ldots, v^{k}_C\}$ for $C$ such that the parallelotope $\Pi$ formed by $\{v^1_C,
\ldots, v^{k}_C\}$ is such that $\Pi+\Pi \subseteq P$. We do this construction for
all maximal elements of $\bar{\mathcal{F}}$ to obtain a finite polyhedral complex of
parallelotopes $\mathcal{S}$. 

We now show that the union of all elements in
$\mathcal{S}$ contains $0$ in its interior. For every maximal element $C$ of
$\bar{\mathcal{F}}$, there exists $\varepsilon_C > 0$ such that $\delta r \in
\Pi_C$ for all $r\in C$ and all $0 \leq \delta \leq \varepsilon_C$, where $\Pi_C$
is the parallelotope in $\mathcal{S}$ corresponding to $C$. Observe that
$\bar{\mathcal{F}}$ is complete because $\mathcal{F}$ is complete and
$\bar{\mathcal{F}}$ is a triangulation of $\mathcal{F}$. Therefore, choosing
$\varepsilon_2 = \min\{\, \varepsilon_C\st C \in \bar{\mathcal{F}}\,\}$, the ball $B_{\varepsilon_2}(0)$ is
contained in the union of all the parallelotopes in $\mathcal{S}$.  

From Lemma~\ref{lemma:simplex_S}, for every parallelotope $\Pi\in \mathcal S$, there exists $g^\Pi\in \R^k$ such that $\pi'(r) = g^\Pi\cdot r$ for all $r \in \Pi$.  Let $K = \max\{\|g^\Pi\|\st \Pi \in \mathcal{S}\}$.  By the Cauchy--Schwarz inequality, $|\pi'(r)| \leq \|g^\Pi\| \|r\| \leq K \|r\|$ for all $r \in \Pi$.  Since the union of all parallelotopes in $\mathcal S$ contains $B_{\varepsilon_2}(0)$ in its interior, 
$| \pi'(r)|\leq K\|r\|$ for all $r\in\R^n$ satisfying $\|r\|<\varepsilon_2$, i.e., $\pi'$ is locally Lipschitz continuous at the origin.

Since $\pi'$ is a subadditive function that is locally Lipschitz continuous at the origin, Lemma~\ref{lem:LLC+SA=LC} shows that $\pi'$ is (globally) Lipschitz continuous.
\end{proof}

The following lemma will be the main tool for using translates of patches to prove that $\pi'$ is affine in the maximal cells of $\P$. 

\begin{lemma}[Finite path of patches]\label{lemma:path}
  Let $P \subseteq \R^k$ be a full-dimensional polyhedron 
  and $\Pi \subseteq \R^k$ be a full-dimensional parallelotope with~$0\in \Pi$.
  Let $x,y$ be points that lie in $\intr(P)$. 
Then there exist a number $0<\varepsilon\leq 
  1$, an integer~$m$, and points $x^0= x,x^1,x^2,\dots,x^m = y\in P$ such that: 
\begin{enumerate}[\rm(i)]
\item $x^j + \varepsilon \Pi \subseteq P$ for $j=0,\dots,m$,
\item $(x^j+\varepsilon \Pi) \cap (x^{j+1}+\varepsilon \Pi)$ is non-empty for $j=0,\dots,m-1$.
\end{enumerate}
\end{lemma}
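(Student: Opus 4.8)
The statement is a purely geometric/topological fact about a full-dimensional polyhedron~$P$ and a parallelotope~$\Pi$, and it is exactly the kind of ``connect two interior points by a chain of overlapping translated patches'' lemma one needs to transport linearity across a cell. The plan is to first reduce to the case where $x$ and $y$ are connected by a single line segment lying in $\intr(P)$, which is immediate since $\intr(P)$ is convex, hence the segment $[x,y]\subseteq\intr(P)$. So from now on work with the segment $\sigma(t)=(1-t)x+ty$, $t\in[0,1]$.

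Next I would exploit compactness. For each $t\in[0,1]$, $\sigma(t)\in\intr(P)$, so there is $\varepsilon_t>0$ with $\sigma(t)+\varepsilon_t\Pi\subseteq P$; since translating and scaling a fixed bounded set, one can even arrange (shrinking if necessary) that $\sigma(t')+\varepsilon_t\Pi\subseteq P$ for all $t'$ in a relatively open interval around~$t$ — concretely, because $\Pi$ is bounded, $\sigma(t)+\varepsilon_t\Pi + B_\delta(0)\subseteq P$ for some $\delta>0$ (using that $P$ contains an open ball around $\sigma(t)+\varepsilon_t\Pi$, as the latter is a compact subset of $\intr P$), and then $\sigma(t')+\varepsilon_t\Pi\subseteq P$ whenever $\|\sigma(t')-\sigma(t)\|<\delta$. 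These intervals cover $[0,1]$; extract a finite subcover and let $\varepsilon$ be the minimum of the finitely many $\varepsilon_t$'s involved (also capped at~$1$). With this single common $\varepsilon$, every point of the segment satisfies $\sigma(t)+\varepsilon\Pi\subseteq P$.

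Finally I would handle the overlap condition~(ii) by a straightforward quantitative estimate. Since $0\in\Pi$ and $\Pi$ is full-dimensional, $\varepsilon\Pi$ contains a ball $B_\eta(0)$ for some $\eta>0$; more simply, $\varepsilon\Pi$ contains a relatively open neighborhood of the origin within $\spann(\Pi)=\R^k$, so there is $\eta>0$ with $B_\eta(0)\subseteq\varepsilon\Pi$. Choose $m$ large enough that consecutive sample points $x^j:=\sigma(j/m)$ satisfy $\|x^{j+1}-x^j\|=\|y-x\|/m<\eta$. Then $x^{j+1}\in x^j+B_\eta(0)\subseteq x^j+\varepsilon\Pi$, and also $x^{j+1}=x^{j+1}+0\in x^{j+1}+\varepsilon\Pi$, so $x^{j+1}\in (x^j+\varepsilon\Pi)\cap(x^{j+1}+\varepsilon\Pi)$, giving non-emptiness of the intersection. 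All $x^j$ lie on $\sigma([0,1])\subseteq\intr(P)\subseteq P$ and satisfy~(i) by the previous paragraph, and $x^0=x$, $x^m=y$ by construction.

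\textbf{Main obstacle.} There is no deep difficulty here; the only point requiring care is the uniformization in~(ii): producing \emph{one} $\varepsilon$ that simultaneously keeps all translates inside~$P$ \emph{and} is still large enough (relative to~$1/m$, which we get to pick afterwards) for the patches to overlap. The resolution is to fix $\varepsilon$ first by the compactness argument and only then choose $m$ depending on~$\varepsilon$ (via the ball $B_\eta(0)\subseteq\varepsilon\Pi$); since $m$ may be taken arbitrarily large, there is no circularity. One should also double-check the edge case where $x=y$, which is trivial ($m=0$), and note that full-dimensionality of $\Pi$ is exactly what guarantees $\varepsilon\Pi$ has nonempty interior containing~$0$, which the overlap step needs.
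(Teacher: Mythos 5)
There is a genuine gap in your overlap step~(ii). You assert that since $0\in\Pi$ and $\Pi$ is full-dimensional, $\varepsilon\Pi$ contains a ball $B_\eta(0)$, and hence $x^{j+1}\in x^j+\varepsilon\Pi$ once $\|x^{j+1}-x^j\|<\eta$. But $0\in\Pi$ does \emph{not} imply $0\in\intr(\Pi)$: in the paper's intended use (Lemmas~\ref{lemma:simplex_S} and~\ref{lemma:shifted-affine}), $\Pi=\bigl\{\sum_i\lambda_i v^i\,:\,\lambda_i\in[0,1]\bigr\}$ has $0$ as a \emph{vertex}, so $\varepsilon\Pi$ contains no ball around the origin, and $x^{j+1}-x^j\in\varepsilon\Pi$ can fail no matter how small the step is (the step direction $y-x$ need not lie in the cone generated by the $v^i$). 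The correct observation is that non-emptiness of $(x^j+\varepsilon\Pi)\cap(x^{j+1}+\varepsilon\Pi)$ is equivalent to $x^{j+1}-x^j\in\varepsilon\Pi-\varepsilon\Pi$, and since $\Pi$ is full-dimensional the symmetric set $\Pi-\Pi$ \emph{does} have $0$ in its interior; so you should take $\eta$ with $B_\eta(0)\subseteq\varepsilon(\Pi-\Pi)$ and proceed as before. (The paper sidesteps this by first normalizing $\Pi$ to the unit cube $[0,1]^k$ and bounding $\|x^{j+1}-x^j\|_\infty<\varepsilon$, which amounts to the same thing.)

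As a side remark, your uniformization of $\varepsilon$ via a compactness/finite-subcover argument over the segment is correct but heavier than needed: the paper simply picks $\delta$ with $B_\delta(x),B_\delta(y)\subseteq P$, chooses $\varepsilon$ with $\varepsilon\Pi\subseteq B_\delta(0)$, and then invokes convexity of $P$ to get $x^j+\varepsilon\Pi\subseteq P$ for every point $x^j$ on the segment $[x,y]$. Both routes give property~(i); the convexity argument is just shorter.
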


\begin{proof}
After a linear change of coordinates, we can assume that the parallelotope~$\Pi$ is the unit cube $[0,1]^k$.

Since $x, y\in\intr(P)$, then there exists $\delta > 0$ such that both
  $B_\delta(y)$ and $B_\delta(x)$ lie within~$P$.  Choose $0<\varepsilon \leq 1$ so that $\varepsilon
  \Pi \subset B_\delta(0)$. Therefore, $x + \varepsilon\Pi \subseteq P$ and $y + \varepsilon\Pi \subseteq P$.  Let $m > \|y^0-x^0\|_\infty / \varepsilon$ be an integer.  

 Let 
  $$x^j = x^0 + \frac{j}{m} (y^0 - x^0)\quad\text{for $j=1,\dots,m$;}$$
  thus $x^m = y^0$. 
 Since $x + \varepsilon\Pi \subseteq P$ and $y + \varepsilon\Pi \subseteq P$, by convexity $x^j+\varepsilon \Pi \subseteq P$ for all $j = 0, \ldots, m$. In particular, $x^j \in P$ for all $j = 0, \ldots, m$.
  Moreover $\|x^{j+1} - x^j\|_\infty < \varepsilon\leq 1$, and thus 
  $(x^j+\varepsilon \Pi) \cap (x^{j+1}+\varepsilon \Pi)$ is non-empty.
\end{proof}

\begin{lemma}[$\pi'$ is affine on each maximal cell of~$\pi$]\label{lemma:shifted-affine}
  Let $P_0 \in \P_i$ be a maximal cell containing the origin
  and let $\Pi$ be a full-dimensional parallelotope with $0\in \Pi \subseteq P_0$
  such that $\pi'(x) = g'\cdot x$ for all $x\in \Pi$.
  Let $P$ be a maximal cell in $\mathcal P_i$ and $\bar x \in \intr(P)$. 
  Then $\pi'(x) = g' \cdot (x - \bar x) + \pi'(\bar x)$ for all $x \in P$.
\end{lemma}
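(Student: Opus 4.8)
The plan is to prove Lemma~\ref{lemma:shifted-affine} by a connectedness-and-propagation argument: we know from Lemma~\ref{lemma:simplex_S} that $\pi'$ is linear (with gradient $g'$) on the parallelotope $\Pi$ at the origin; we want to ``transport'' this information to an arbitrary maximal cell $P\in\P_i$ via a chain of overlapping translates of a shrunken copy of $\Pi$. The key local step is: if $\pi'$ is affine with gradient $g'$ on a small parallelotope $Q$, and $Q'$ is a translate of $Q$ that overlaps $Q$ and is contained in a region where $\pi$ has gradient $\gp^i$, then $\pi'$ is affine with the \emph{same} gradient $g'$ on $Q'$. This is where the hypothesis $E(\pi)\subseteq E(\pi')$ does the work, exactly as in Lemma~\ref{lemma:simplex_S}.

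First I would establish the local propagation step in a clean form. Fix a full-dimensional parallelotope $\Pi_0 = \varepsilon\Pi$ (with $\varepsilon$ to be chosen) and suppose $x^j + \Pi_0$ and $x^{j+1}+\Pi_0$ are both contained in the \emph{interior} region where $\pi$ has gradient $\gp^i$ (i.e., both lie in the union of cells of $\P_i$, and in fact both lie in a single maximal cell $P\in\P_i$ along with their Minkowski sum — this is what the $\Pi+\Pi\subseteq P$-type condition buys us). The translation-invariance of the additivity relations is the crucial point: since $\pi$ is affine with gradient $\gp^i$ on $P \supseteq (x^j+\Pi_0)+(\text{small})$, for $r\in\Pi_0$ and appropriate base points we get $\pi(a+r) = \pi(a)+\gp^i\cdot r$ type identities, i.e. pairs in $E(\pi)$, hence in $E(\pi')$; applying the Interval Lemma (Lemma~\ref{lem:interval_lemma}) along each of the $k$ generating directions of $\Pi_0$ exactly as in Lemma~\ref{lemma:simplex_S} shows $\pi'$ is affine on $x^j+\Pi_0$. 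The overlap $(x^j+\Pi_0)\cap(x^{j+1}+\Pi_0)\neq\emptyset$ then forces the gradient of $\pi'$ on $x^{j+1}+\Pi_0$ to equal its gradient on $x^j+\Pi_0$ (two affine functions agreeing on a full-dimensional set have the same gradient), so by induction the gradient stays $g'$ all along the chain.

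Next I would assemble the global argument. Given the target maximal cell $P\in\P_i$ and $\bar x\in\intr(P)$: the cell $P_0\in\P_i$ contains the origin, and since $P_0\sim P$ they have the same gradient $\gp^i$ under $\pi$. Pick an interior point $x\in\intr(P_0)$ close enough to $0$ that $x\in\Pi$ already (so $\pi'(x)=g'\cdot x$ is known); actually it is cleaner to first note by Lemma~\ref{lemma:simplex_S} (applied with a slightly smaller parallelotope, using local finiteness to find $\Pi'\subseteq\Pi$ with $\Pi'+\Pi'\subseteq P_0$) that $\pi'$ is affine with gradient $g'$ on a neighborhood of $0$ inside $P_0$. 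Then I would invoke Lemma~\ref{lemma:path} (Finite path of patches) — but applied carefully: the natural way is to work in the cell $P\cup P_0$? No — instead, translate: we want a path of patches inside the \emph{single} cell that realizes the shift. Here I would use that since $P,P_0\in\P_i$ and $\pi$ is periodic and piecewise linear with the given structure, we can find a lattice-translate or simply use that the patches $x^j+\varepsilon\Pi$ must stay inside cells on which $\pi$ has gradient $\gp^i$ — concretely apply Lemma~\ref{lemma:path} with the polyhedron taken to be $P$ (connecting an interior point of $P$ near the ``image'' of $0$ to $\bar x$) and separately with the polyhedron $P_0$, using translation-invariance of $\pi$ near these cells to keep the additivity relations valid. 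Propagating gradient $g'$ along the combined chain and using continuity of $\pi'$ (Lemma~\ref{lemma:theta-lipschitz}) to pass from $\intr(P)$ to all of $P$ gives $\pi'(x) = g'\cdot(x-\bar x) + \pi'(\bar x)$ on $P$.

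The main obstacle I anticipate is the bookkeeping that keeps each patch $x^j+\varepsilon\Pi$ inside a region where $\pi$ genuinely has gradient $\gp^i$ \emph{and} where the Minkowski-sum condition needed to extract additivity relations from $E(\pi)$ holds; the cells of $\P_i$ need not be convex as a union, so Lemma~\ref{lemma:path} must be applied cell-by-cell (within each maximal cell) and the gradient consistency carried across shared faces. The cleanest route is: for the path, never leave a single maximal cell except through overlapping patches whose union lies in one cell; since $\pi$ is affine on each maximal cell, the relation $\pi(a+r)-\pi(a)=\gp^i\cdot r$ holds whenever $a$ and $a+r$ lie in the same maximal cell $P\in\P_i$, which is automatically satisfied when $a\in x^j+\varepsilon\Pi$, $r\in\varepsilon\Pi$, and $x^j+\varepsilon\Pi+\varepsilon\Pi\subseteq P$ — so I would shrink $\varepsilon$ (using local finiteness / compactness of the path) to guarantee this for every $j$, then the rest is the same Interval Lemma computation as in Lemma~\ref{lemma:simplex_S}. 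Everything else is routine once this geometric setup is pinned down.
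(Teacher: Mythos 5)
Your high-level plan — a chain of overlapping translates of a shrunken parallelotope inside $P$, using $E(\pi)\subseteq E(\pi')$ to read off information from $\pi$'s affineness — is exactly the paper's approach, and you do identify the right additivity relations at the very end ($a=x^j$, $r=s\in\varepsilon\Pi$, pair in $E(\pi)$). However, two parts of the write-up as stated are off in a way that matters.

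First, the Interval Lemma cannot be applied ``exactly as in Lemma~\ref{lemma:simplex_S}'' on a translated patch. In Lemma~\ref{lemma:simplex_S} the Interval Lemma is fed the additivity $\phi(\lambda)+\phi(\lambda')=\phi(\lambda+\lambda')$ of the restriction $\phi(\lambda)=\pi'(\lambda v^i)$; this holds because both base points are $0$ and $\pi'(0)=0$. For a translated patch you would set $\phi(\lambda)=\pi'(x^j+\lambda v^i)$, and $\phi(\lambda)+\phi(\lambda')=\phi(\lambda+\lambda')$ fails in general (it would force $\pi'(x^j)=0$). What $E(\pi)\subseteq E(\pi')$ actually gives you is the \emph{cross} relation $\pi'(x^j+s)=\pi'(x^j)+\pi'(s)$ for $s\in\varepsilon\Pi$, because $\pi(x^j+s)-\pi(x^j)=\gp^i\cdot s=\pi(s)$ (the first equality since $x^j,x^j+s\in P\in\P_i$, the second since $s\in\Pi\subseteq P_0\in\P_i$ and $\pi(0)=0$). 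Since $\pi'(s)=g'\cdot s$ is already known from Lemma~\ref{lemma:simplex_S}, this directly yields $\pi'(x^j+s)=\pi'(x^j)+g'\cdot s$: the patch is affine with gradient $g'$ \emph{immediately}, and no new invocation of the Interval Lemma is needed. Your overlap argument is then only needed to pin down the additive constants $\alpha_j$ along the chain, not the gradients.

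Second, there is no need to (and in general no way to) connect $P_0$ spatially to $P$; your worry about ``lattice translates'' and a starting point ``near the image of $0$'' is misplaced. Distinct cells of $\P_i$ need not share a face or even be in the same connected component of the union of $\P_i$-cells. The lemma's conclusion leaves $\pi'(\bar x)$ unspecified and only asserts the gradient of $\pi'$ on $P$ is $g'$, so the chain from Lemma~\ref{lemma:path} should run entirely within $P$ from $\bar x$ to an arbitrary $\bar y\in\intr(P)$; the connection to $P_0$ is carried not by a spatial path but by the additivity $(x^j,s)\in E(\pi)\subseteq E(\pi')$ with $s\in\varepsilon\Pi\subseteq P_0$. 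Once this is clear, the Minkowski-sum condition $x^j+\varepsilon\Pi+\varepsilon\Pi\subseteq P$ you propose to enforce is also unnecessary — $x^j+\varepsilon\Pi\subseteq P$ (which Lemma~\ref{lemma:path} already gives) suffices.
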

\begin{proof} 
  First consider $\bar y \in \intr(P)$.  Let $\varepsilon$ and $x^0 = \bar x,\dots,x^m = \bar y\in P$ be the data from applying 
  Lemma~\ref{lemma:path} on $x,y, P$ and $\Pi$.  Fix any $j\in\{0,\dots,m\}$ and consider an arbitrary $s\in \varepsilon \Pi$. 
Since $P \in \P_i$, $\pi(x^j + s) - \pi(x^j) = \gp^i\cdot s = \pi(s)$, where the second equality follows from $\Pi \subseteq P_0 \in \P_i$. 
Therefore, $\pi(x^j + s) = \pi(x^j) + \pi(s)$ and so the pair $(x^j, s)$ is in $E(\pi) \subseteq E(\pi')$. 
Therefore, $\pi'(x^j + s) = \pi'(x^j) + \pi'(s)$, and thus $\pi'(x^j + s) = \pi'(x^j) + g'\cdot s$. 
Thus $\pi'$, restricted to each $x^j+\varepsilon \Pi$, is an affine function with
  gradient~$g'$, which we write as
  \begin{equation}\label{eq:continuation-j}
    \pi'(x) = g' \cdot (x - \bar x) + \alpha_j 
    \quad\text{for $x\in x^{j}+\varepsilon \Pi$}
  \end{equation}
  for some real number~$\alpha_j$.

  For all $j=0,1,\ldots, m$, we prove that  $\alpha_j = \pi'(\bar x)$ 
  and therefore $\pi'(x) = g'\cdot (x - \bar x) + \pi'(\bar x)$ holds for all $x\in
  x^j+\varepsilon \Pi$.  We do this by induction on~$j$.
  For $j=0$, this holds since $\bar x = x^0$.
  Now let $j+1 > 0$ and assume~$\alpha_{j} = \pi'(\bar x)$. 
  Let $z$ be any point in the intersection $(x^j+\varepsilon \Pi) \cap
  (x^{j+1}+\varepsilon \Pi)$, which is non-empty by Lemma~\ref{lemma:simplex_S}. 
  By evaluating~\eqref{eq:continuation-j} for $j$ and $j+1$ at
  $x=z$, we see that in fact~$\alpha_{j+1}=\pi'(\bar x)$. Therefore, in particular, $\pi'(\bar y) = \pi'(x^m) = a'(\bar y - \bar x) + \pi'(\bar x)$. 
  
  This shows that for every $x \in \intr(P)$, $\pi'(x) = g' \cdot (x - \bar x) + \pi'(\bar x)$. By Lemma~\ref{lemma:theta-lipschitz}, $\pi'$ is continuous, and therefore the equation extends from $\intr(P)$ to all of~$P$.
\end{proof}

\begin{prop}\label{prop:theta-compatible}
  The function~$\pi'$ is a piecewise linear function 
compatible with~$\{\mathcal{P}_i\}_{i = 1}^{k + 1}$.
\end{prop}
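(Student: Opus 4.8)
The plan is to assemble the three preceding lemmas, since all the real work has already been done. Fix $i \in \{1,\dots,k+1\}$. By Lemma~\ref{lem:gi-cone} there is a maximal cell $P_0 \in \P_i$ with $0 \in P_0$, and $P_0$ is full-dimensional because $\P$ is pure and complete. The first task is to produce a full-dimensional parallelotope $\Pi$ with $0 \in \Pi$, $\Pi \subseteq P_0$, and $\Pi + \Pi \subseteq P_0$, as required by Lemma~\ref{lemma:simplex_S}. To do this I would pass to the tangent cone of $P_0$ at the origin, which is full-dimensional, choose $k$ linearly independent vectors $v^1,\dots,v^k$ pointing into this cone (so the parallelotope they generate lies in $P_0$ in a neighborhood of $0$), and then rescale all of them by one common sufficiently small factor $t>0$ so that the doubled parallelotope $\Pi + \Pi$ (generated by $2tv^1,\dots,2tv^k$) still lies in $P_0$; such a $t$ exists precisely because $0 \in P_0$ and $P_0$ is full-dimensional.

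Next, Lemma~\ref{lemma:simplex_S} applied to this $P_0$ and $\Pi$ gives a vector, call it $g'^i$, with $\pi'(r) = g'^i \cdot r$ for all $r \in \Pi$. Now invoke Lemma~\ref{lemma:shifted-affine} with this same $P_0$, $\Pi$, and $g' = g'^i$: for every maximal cell $P \in \P_i$ and every $\bar x \in \intr(P)$ we obtain $\pi'(x) = g'^i \cdot (x - \bar x) + \pi'(\bar x)$ for all $x \in P$. Rewriting, $\pi'(x) = g'^i \cdot x + \delta_P$ on $P$, where $\delta_P := \pi'(\bar x) - g'^i \cdot \bar x$ is a constant depending only on $P$ (it is independent of the choice of $\bar x$ because the left-hand side is). Since $P$ ranges over all maximal cells of $\P_i$ with a single gradient $g'^i$, and $i$ was arbitrary, $\pi'$ is affine on every maximal cell of $\P$ with gradient determined only by the index of the subcomplex containing that cell. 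This is exactly the definition of $\pi'$ being a piecewise linear function with cell complex $\P$ and compatible with $\{\P_i\}_{i=1}^{k+1}$. (The Lipschitz continuity of $\pi'$ from Lemma~\ref{lemma:theta-lipschitz} is already used inside Lemma~\ref{lemma:shifted-affine} to pass from $\intr(P)$ to $P$, so it needs no separate mention here.)

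The only point requiring any care is the construction of $\Pi$: one must check that a single full-dimensional parallelotope with $0$ as a vertex can be fit inside $P_0$ together with its double, including the case where $0$ lies on the boundary of $P_0$ (handled by working in the tangent cone as above), and one must note that a single such $\Pi$ inside one cell $P_0$ of $\P_i$ suffices to seed the argument for all cells of $\P_i$ — which is exactly how Lemma~\ref{lemma:shifted-affine} is stated. Beyond this, the proposition is a direct concatenation of Lemmas~\ref{lemma:simplex_S} and~\ref{lemma:shifted-affine}, so I expect the write-up to be short, with no genuinely new argument needed.
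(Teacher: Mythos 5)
Your proposal is correct and follows essentially the same route as the paper: invoke Lemma~\ref{lem:gi-cone} to locate a full-dimensional maximal cell $P_0 \in \P_i$ containing the origin, fit a parallelotope $\Pi$ with $\Pi + \Pi \subseteq P_0$, feed it to Lemma~\ref{lemma:simplex_S} to get a gradient $g'$, and then propagate via Lemma~\ref{lemma:shifted-affine} to all maximal cells of $\P_i$. The only difference is that you spell out the (standard, correct) tangent-cone-and-rescaling construction of $\Pi$, which the paper simply asserts to exist.
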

\begin{proof}
  Fix $i\in\{1,\dots,k+1\}$. Since $\pi$ satisfies the hypotheses of Lemma~\ref{lem:gi-cone}, there exists a maximal cell $P_0\in\mathcal \P_i$ containing the origin. Since $P_0$ is a full-dimensional polyhedron containing the origin, there exists a full-dimensional parallelotope $\Pi$ with
  $0\in \Pi$ and $\Pi+\Pi \subseteq P_0$. Let $g'$ be the vector from
  Lemma~\ref{lemma:simplex_S} such that $\pi'(r) = g'\cdot r$ for $r\in \Pi$. 
  Define $\gt^i = g'$.
  Now let $P$ be any maximal cell in $\P_i$ and pick any $y\in\relint(P)$.
  By Lemma~\ref{lemma:shifted-affine}, 
  $$\pi'(r) = \gt^i \cdot (r - y) +
  \pi'(y) = \gt^i\cdot r + \delta_P$$ for $r\in P$, 
  where we set $\delta_P = \pi'(y) - \gt^i\cdot y$. 
  Thus $\pi'$ is a piecewise linear function compatible with~$\{\mathcal{P}_i\}_{i = 1}^{k+1}$.
\end{proof}

        Notice that this compatibility implies that there exist vectors $\gt^1, \gt^2, \ldots, \gt^{k + 1}$ corresponding to $\P_1, \ldots, \P_{k+1}$ such that for any $P \in \P_i$, there exists $\delta_P$ such that $\pi'(r) = \gt^i\cdot r + \delta_P$. However, note that we have not shown $\gt^1, \gt^2, \ldots, \gt^{k + 1}$ to be all distinct.


\subsection{Constructing a system of linear equations}\label{sec:gradient-set}

        As the next step in proving that $\pi = \pi'$, we construct a system of linear equations which is satisfied by both $\gp^1, \ldots, \gp^{k+1}$ and $\gt^1, \ldots, \gt^{k+1}$.  
        
The system has two sets of constraints, the first of which follows from
Theorem \ref{thm:minimal} and Lemma \ref{lemma:integration}.  The second set
of constraints is more involved.  Consider two adjacent cells $P, P' \in
\mathcal{P}$ that contain a segment $[x, y] \subseteq \mathbb{R}^k$ in their
intersection. Along the line segment $[x,y]$, the gradients of $P$ and $P'$
projected onto the line spanned by the vector $y - x$ must agree; the second
set of constraints captures this observation. We will identify a set of vectors $r^1, \ldots, r^{k+1}$ such that every subset of $k$ vectors is linearly independent and such that each vector $r^i$ is contained in $k$ cells of $\P$ with different
gradients. We then use the segment $[0,r^i]$ to obtain linear equations involving the gradients of $\pi$ and $\pi'$. The fact that every subset of $k$ vectors is linearly independent will be crucial in ensuring the uniqueness of the system of equations.

\begin{remark}
  In the case $k=2$, in the terminology of~\cite{3slope}, these vectors would all be
  \emph{directions} of the piecewise linear function~$\pi$; see
  also the discussion in Appendix~\ref{sec:generalizes}.
\end{remark}

To show the existence of such a set of vectors, we utilize the following
classical lemma in combinatorial topology.         

\begin{lemma}[KKM \cite{kkm,fpt}]\label{lem:KKM}
        Consider an $n$-simplex $\conv(u^j)_{j = 1}^{n}$. Let $F_1, F_2, \ldots, F_n$ be closed sets such that for all $I \subseteq \{1, \ldots, n\}$, the face $\conv(u^j)_{j \in I}$ is contained in $\bigcup_{j \in I} F_j$. Then the intersection $\bigcap_{j = 1}^n F_j$ is non-empty. 

\end{lemma}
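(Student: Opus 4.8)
The plan is to derive this from Sperner's Lemma by a compactness argument, which is the classical proof of the KKM Lemma. Recall \emph{Sperner's Lemma}: if the simplex $\Delta = \conv(u^j)_{j=1}^n$ is triangulated and every vertex $v$ of the triangulation receives a label $\ell(v) \in \{1, \ldots, n\}$ satisfying $\ell(v) \in I$ whenever $v \in \conv(u^j)_{j \in I}$, then some cell of the triangulation is \emph{fully labeled}, i.e., its $n$ vertices carry the $n$ distinct labels $1, \ldots, n$.

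First I would fix a sequence of triangulations $T_1, T_2, \ldots$ of $\Delta$ with $\operatorname{mesh}(T_m) \to 0$ (for instance iterated barycentric subdivisions). For a vertex $v$ of $T_m$, let $I(v)$ be the inclusion-minimal subset of $\{1, \ldots, n\}$ with $v \in \conv(u^j)_{j \in I(v)}$, so that $v$ lies in the relative interior of that face. Applying the covering hypothesis to $I = I(v)$ gives $v \in \bigcup_{j \in I(v)} F_j$, so we can choose a label $\ell(v) \in I(v)$ with $v \in F_{\ell(v)}$; in particular, when $v$ is interior to $\Delta$ this uses the hypothesis for $I = \{1, \ldots, n\}$, namely that the whole simplex is covered. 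By construction this labeling obeys the Sperner constraint, so Sperner's Lemma yields a fully labeled cell $\sigma_m = \conv(w^1_m, \ldots, w^n_m)$ of $T_m$ with $w^j_m \in F_j$ for each $j = 1, \ldots, n$.

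Finally I would pass to the limit. Since $\Delta$ is compact, after extracting a subsequence we may assume $w^1_m \to x$ for some $x \in \Delta$. Because $\operatorname{diam}(\sigma_m) \leq \operatorname{mesh}(T_m) \to 0$, in fact $w^j_m \to x$ for every $j$. Each $F_j$ is closed and contains the sequence $(w^j_m)_m$, hence $x \in F_j$ for all $j$, so $x \in \bigcap_{j=1}^n F_j$ and the intersection is non-empty.

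The main obstacle is Sperner's Lemma itself: its proof is the elementary but genuinely combinatorial parity argument that counts the $(n-2)$-faces of the triangulation bearing the label set $\{1, \ldots, n-1\}$ and tracks them across cells, and in a paper of this kind one simply cites it (as is done here via \cite{kkm,fpt}). The remaining ingredients — existence of triangulations of arbitrarily small mesh, the verification that our labeling respects the Sperner constraint (which is exactly where the face-covering hypothesis enters), and the compactness extraction — are all routine. An alternative route is to quote Brouwer's fixed point theorem directly, but the Sperner argument keeps everything self-contained.
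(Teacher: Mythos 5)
The paper does not prove the KKM Lemma; it simply cites it as a classical result in combinatorial topology (references \cite{kkm,fpt}) and uses it as a black box inside the proof of Lemma~\ref{lemma:kkmDirections}. Your proposal supplies the standard proof of KKM from Sperner's Lemma together with a compactness argument, and it is correct: the labeling $\ell(v)\in I(v)$ with $v\in F_{\ell(v)}$ is legal by the face-covering hypothesis applied to the minimal face containing $v$, it obeys the Sperner constraint by construction, and the fully-labeled cells $\sigma_m$ shrink to a point $x$ that lies in every $F_j$ because each $F_j$ is closed and contains the $j$-labeled vertex of $\sigma_m$ for all $m$. Since you explicitly flag that Sperner's Lemma itself would be cited rather than reproved, your treatment is at the same level of rigor as the paper's; you have merely unfolded one layer of the citation. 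There is no gap, only a difference in how much of the classical background is spelled out.

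One small remark on the statement as given in the paper, which you inherit: the object $\conv(u^j)_{j=1}^{n}$ has $n$ vertices and is therefore an $(n-1)$-simplex, not an $n$-simplex. This is a harmless off-by-one in the paper's phrasing and does not affect your argument, but it is worth noting if you intend to present the proof.
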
        \begin{lemma}\label{lemma:kkmDirections}
                There exist vectors $r^1, r^2, \ldots, r^{k + 1} \in \R^k$ with the following properties:
                \begin{enumerate}[\rm(i)]
                        \item For every $i,j,\ell \in \{1, \ldots, k + 1\}$ with $j, \ell$ different from $i$, the equations $r^i\cdot \gp^{j} = r^i\cdot \gp^{\ell}$ and $r^i\cdot \gt^{j} = r^i\cdot \gt^{\ell}$ hold.
                        
                        \item $\cone(r^i)_{i = 1}^{k + 1} = \R^k$.
                \end{enumerate}
        \end{lemma}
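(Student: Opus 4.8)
The plan is to build the vectors $r^1,\dots,r^{k+1}$ via the KKM Lemma applied to a suitably chosen simplex in $\R^k$. First I would recall from Lemma~\ref{lem:gi-cone} that $\cone(\gp^i)_{i=1}^{k+1} = \R^k$; by Lemma~\ref{lem:lin-indep} every $k$-subset of $\{\gp^1,\dots,\gp^{k+1}\}$ is linearly independent, so for each $i$ the $k$ hyperplanes $\{\, r : r\cdot\gp^j = r\cdot\gp^\ell\,\}$ (for $j,\ell$ ranging over indices $\ne i$) all pass through $0$ and their common solution set is a line $L_i$ through the origin (the orthogonal complement of $\spann\{\gp^j - \gp^\ell : j,\ell \ne i\}$, which is $(k-1)$-dimensional since the $k$ vectors $\gp^j$, $j\ne i$, are affinely independent). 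The property (i) we want is exactly that $r^i \in L_i$ and simultaneously $r^i \in L_i'$, the analogous line for the $\gt$'s. I would need to observe that in fact $L_i = L_i'$: this should follow because Lemma~\ref{lemma:integration} forces a normalization relating the $\gt^i$ to the $\gp^i$, or more directly because the cell complex $\P$ is shared and the continuity/compatibility (Proposition~\ref{prop:theta-compatible}) on a common edge pins down the same projected-gradient condition; if the lines genuinely differed one would argue the desired $r^i$ must be picked in the intersection. The cleaner route is: pick a generic direction $r^i$ in the line $L_i$ (one of its two rays) so that property (i) holds for $\pi$; the matching condition for $\pi'$ then needs a separate justification from the structure of $\P$, and I'd expect the paper to arrange the KKM construction so that the $F_j$ are defined in terms of the cell complex $\P$ directly (e.g. $F_j$ = set of directions lying in a cell of $\P_j$ near the origin), which automatically gives both conditions at once since both $\pi$ and $\pi'$ are compatible with $\{\P_i\}$.

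Concretely, the main construction goes as follows. Using local finiteness (Definition~\ref{assum:fan}), near the origin $\P$ agrees with a complete polyhedral fan $\mathcal F$, and by Lemma~\ref{lem:gi-cone} each class $\P_i$ contains a full-dimensional cone of $\mathcal F$ through $0$. Fix any small sphere $S_\rho(0)$, or better, intersect with the boundary of a simplex $\Delta = \conv(u^1,\dots,u^{k+1})$ chosen so that $0 \in \intr\Delta$; then $\partial\Delta$ is homeomorphic to $S^{k-1}$ and the radial images of the cones of $\mathcal F$ tile $\partial\Delta$. For $j = 1,\dots,k+1$ let $F_j \subseteq \Delta$ be the union of all closed faces $\conv(u^\ell : \ell \in J)$ of $\Delta$ whose relative interior's radial projection meets a cell of $\P_j$ — more robustly, one defines $F_j = \{\, x \in \Delta : x \ne 0, \ x/\|x\| \text{ (radially) lies in } \overline{\bigcup \P_j}\,\} \cup \{0\}$, which is closed since $\P_j$ is a closed subcomplex. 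The face-covering hypothesis of KKM, namely $\conv(u^\ell)_{\ell\in I} \subseteq \bigcup_{\ell\in I} F_\ell$, is where one uses that around the origin the $k+1$ slope-regions cover $\R^k$ and, crucially, that each $(k-1)$-dimensional face of $\Delta$ (being $(k-1)$-dimensional) cannot be covered by fewer than... — this is the step that encodes "every $k$-subset is linearly independent" and is the analog of the one-dimensional observation mentioned in the introduction.

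I expect the main obstacle to be precisely verifying the KKM covering hypothesis for all faces simultaneously: one must show that the face $\conv(u^\ell)_{\ell\in I}$, of dimension $|I|-1$, is covered by $\bigcup_{\ell\in I}F_\ell$, which amounts to showing that a codimension-$(k+1-|I|)$ slice through the origin meets only the slope-regions indexed by $I$ — this needs an inductive or dimension-counting argument exploiting that the gradients $\gp^i$ (and hence the separating hyperplanes between adjacent slope cones) are in general position, i.e., the linear independence of every $k$-subset. Once KKM yields a point $p \in \bigcap_{j=1}^{k+1} F_j$, $p \ne 0$, every neighborhood of the ray $\R_+ p$ meets all $k+1$ slope-regions; shrinking appropriately and using that near $0$ the complex is a fan, one extracts from points near $\R_+ p$ a direction $r^{k+1}$ (after relabeling) lying in $k$ distinct cells of $\P$ with distinct gradients, which forces the projected-gradient equalities of (i) for both $\pi$ and $\pi'$ by compatibility with $\{\P_i\}$. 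Repeating the argument with each vertex $u^i$ played by a different index — or, more economically, observing the single KKM point already lies on all the required lines — yields $r^1,\dots,r^{k+1}$; finally, property (ii), $\cone(r^i)_{i=1}^{k+1} = \R^k$, should follow from the sign conditions $r^i\cdot \gp^i$ having the opposite sign to $r^i \cdot \gp^j$ ($j\ne i$) together with $\cone(\gp^i) = \R^k$ and Lemma~\ref{lem:cone-span}, after checking the requisite inner-product sign pattern holds by construction.
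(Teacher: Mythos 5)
Your high-level plan is correct (apply the KKM lemma near the origin, with the sets $F_j$ defined via the cell subcomplexes $\P_j$, so that compatibility of both $\pi$ and $\pi'$ with $\{\P_i\}$ gives the two families of equalities in (i) simultaneously), but you explicitly flag the KKM covering hypothesis as a gap and propose a ``dimension-counting'' argument that you do not carry out; this is precisely where the proposal is incomplete and where the paper's proof takes a different, more direct route. The paper does not take an arbitrary simplex $\Delta$ with $0$ in its interior and then try to verify the covering hypothesis a posteriori. Instead it first introduces the half-spaces $H_i = \{\, r : \gp^i\cdot r \leq 0\,\}$ and proves, using Lemma~\ref{lem:genuinely-k-dim} (genuine $k$-dimensionality), that $F_i\cap H_i=\emptyset$: if some $r\in H_i$ lay in a cell of $\P_i$, then $\pi$ would vanish on the segment $[0,r]$ and hence fail to be genuinely $k$-dimensional. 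This claim is the structural ingredient you are missing. It yields $C_I\cap B_\varepsilon(0)\subseteq\bigcup_{i\in I}F_i$ for every $I$, where $C_I=\bigcap_{i\notin I}H_i$, and since each $C_j$ is full-dimensional (by Lemma~\ref{lem:lin-indep}), one can \emph{choose} the vertices $v^j\in\intr(C_j)\cap B_\varepsilon(0)$. With this choice, every face $\conv(v^j)_{j\in I}$ lies in the convex set $C_I\cap B_\varepsilon(0)$ and the KKM covering hypothesis becomes automatic; there is nothing left to count.

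Two further points where your proposal departs from a working proof. First, you suggest applying KKM once to the full simplex $\Delta$ to obtain a single point $p\in\bigcap_{j=1}^{k+1}F_j$, and then extracting all the $r^i$ by perturbing near $\R_+p$ or by arguing that $p$ ``already lies on all the required lines.'' The latter cannot be right: the $k+1$ lines $L_i$ you define intersect only in $\{0\}$ in general, so a single nonzero point cannot serve for all $i$. The paper instead applies KKM $(k+1)$ times, once to each facet $\Delta_i=\conv(v^j)_{j\ne i}$ with the family $\{F_j\}_{j\ne i}$, producing a distinct $r^i\in\bigcap_{j\ne i}F_j$ for each $i$. Second, you note that property (ii) should follow from a sign pattern and Lemma~\ref{lem:cone-span}, but you leave the sign pattern unverified. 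In the paper's construction this is immediate: $r^i\in F_j$ together with the claim $F_j\cap H_j=\emptyset$ gives $r^i\notin H_j$, i.e.\ $r^i\cdot\gp^j>0$ for all $j\ne i$, and Lemma~\ref{lem:cone-span} (applied with $a^i=-\gp^i$, $b^i=r^i$) then yields $\cone(r^i)_{i=1}^{k+1}=\R^k$. Without the $H_i$ construction and the disjointness claim, neither the covering hypothesis nor the sign pattern is available, so the proposal as written does not close.
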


        \begin{proof}We consider the neighborhood $B_\varepsilon(0)$ of the origin given by the local finiteness assumption (see Definition~\ref{assum:fan}). 
        Let $F_i = \bigcup_{P \in \mathcal{P}_i} (P \cap {\bar
          B_\varepsilon(0)})$, namely the set of points in the closed ball~${\bar B_\varepsilon(0)}$ for which $\pi$ has gradient $\gp^i$. 
       Since ${\bar B_\varepsilon(0)}$ is compact, Proposition~\ref{prop:complex-finite-on-compact} says that only finitely many terms are non-empty in the union $\bigcup_{P\in \P_i}(P \cap {\bar B_\varepsilon(0)})$. Moreover each term $P \cap {\bar B_\varepsilon(0)}$ is closed as it is the intersection of a polyhedron with a closed ball. Thus, each $F_i$ is a finite union of closed sets and therefore is closed. Our first goal is to show that, for each $i =1, \ldots, k+1$, there is a vector $r^i$ which belongs to $\bigcap_{j \neq i} F_j$.
        
        In order to better understand how the sets $F_i$ intersect, we start by defining the set $H_i = \{\, r \in \R^k \st \gp^i\cdot r \leq 0\,\}$. The crucial property of this set is that the gradient of $\pi$ at these points must be \emph{different} from $\gp^i$, at least around the origin. 

\begin{claim}\label{claim:Hi_interior}
For every $i = 1, \ldots, k+1$, the set $F_i$ is disjoint with $H_i$.
\end{claim}

\begin{proof}
Suppose to the contrary that there exists $P \in \P_i$ and $r \in H_i \cap P \cap {\bar B_\varepsilon(0)}$. Since $r \in {\bar B_\varepsilon(0)},$ the entire segment $[0,r]$ is contained in
$P$. Moreover, $\gp^i\cdot r \leq 0$ as $r \in H_i$.  
Thus $\pi(\lambda r) = \lambda \gp^i\cdot r\leq 0$ for all $0 \leq \lambda
\leq 1$. Since $\pi$ is piecewise linear with a locally finite cell complex and subadditive, Lemmas~\ref{lem:PWL+LF=LLC} and~\ref{lem:LLC+SA=LC} show that $\pi$ is Lipschitz continuous. Therefore, $\pi$ satisfies the hypotheses of Lemma~\ref{lem:genuinely-k-dim} and we conclude that $\pi$ is not genuinely $k$-dimensional. This is a contradiction.
\end{proof}

\begin{figure}
\label{fig:KKM}
\centering
\vspace{-10ex}
\ifpdf
\input{figureKKMcircleSmallerAdjust.pdftex_t}
\else
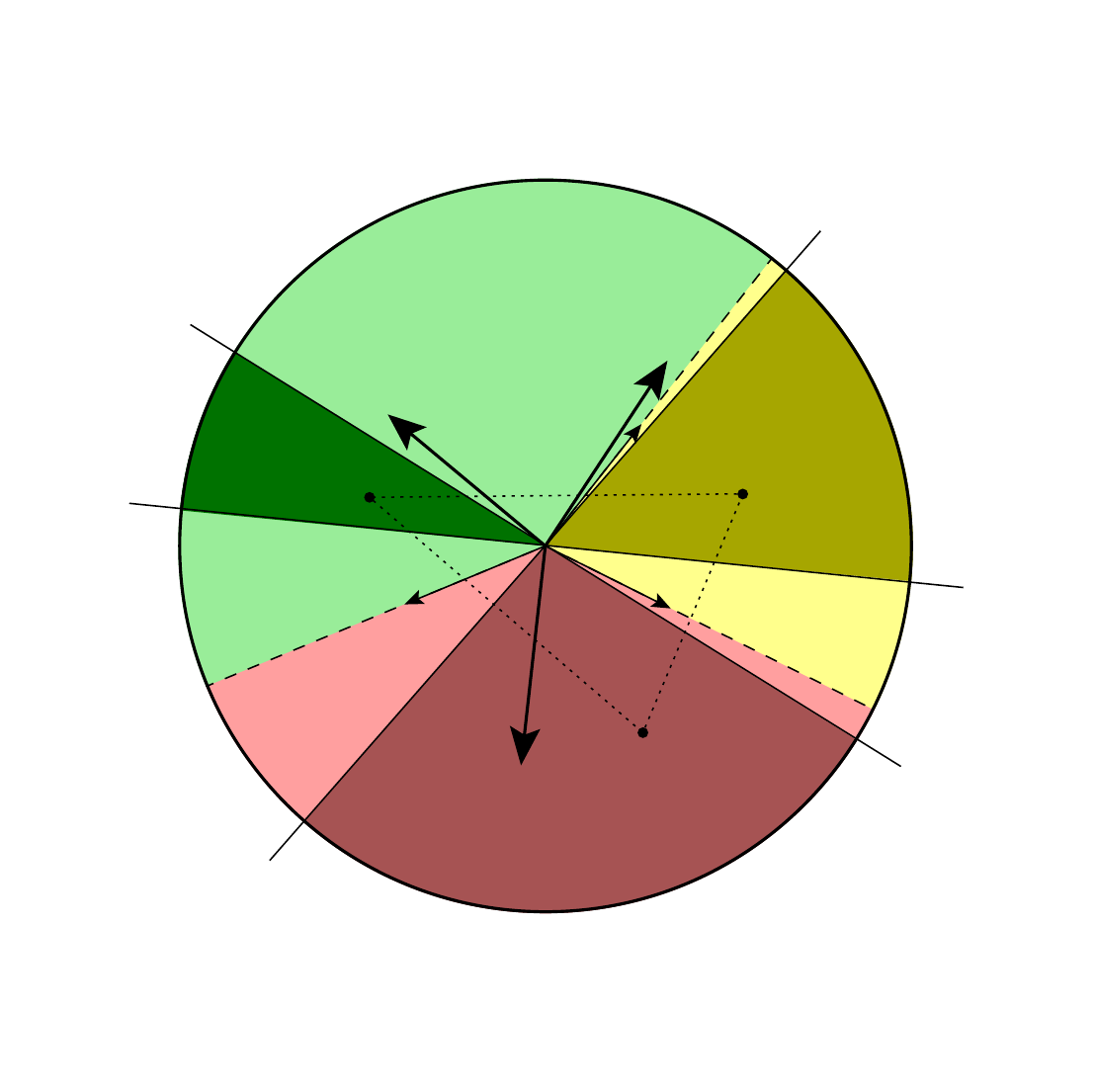
\fi
\vspace*{-5ex}\par
\caption{The geometry of the proof of Lemma~\ref{lemma:kkmDirections}.   
  Each cone $C_i$ (shaded in dark colors) is the intersection of the
  halfspaces~$H_j$ (defined by the gradients~$\bar g^j$) for $j\neq i$. 
  Near the origin (within the ball~$B_\epsilon(0)$), 
  each point of~$C_i$ lies in the set $F_i$ of points
  where the function~$\pi$ has gradient~$\bar g^i$ (shaded in light colors).
  Picking points $v_i$ near the origin in the interior of~$C_i$, 
  we construct a simplex~$\Delta$ with $0$ in its interior. 
  By applying the KKM Lemma to each of its facets~$\Delta_i$, we
  show the existence of the vectors~$r^i$ with the desired properties.
} 
\end{figure}

        For a subset $I \subseteq \{1, \ldots, k+1\}$, define the cone $C_I =
        \bigcap_{i \notin I} H_i$ (for convenience of notation, we use $C_j$
        instead of $C_{\{j\}}$ for a singleton set). From the above claim, for all $i \notin I$ we have $F_i$ disjoint with $C_I$; since $B_\varepsilon(0) \subseteq \bigcup_{i =1,\ldots, k+1} F_i$, we get that $C_I \cap B_\varepsilon(0) \subseteq \bigcup_{i \in I} F_i$. Alternatively, the gradient of $\pi$ in any point in $C_I \cap B_\varepsilon(0)$ must be within the set $\{\gp^i\}_{i \in I}$. We need the following technical property of the cones $C_I$. 

\begin{claim}\label{claim:full-dim}
$C_j$ is full-dimensional for all $j =1, \ldots, k+1$.
\end{claim}

\begin{proof}
Observe that the polar cone $$(C_j)^\circ = \bigl\{\,\textstyle\sum_{i\neq j}\lambda_i \gp^i \st \lambda_i \geq 0\,\bigr\}$$ does not contain any lines because the set $\{\gp^i\}_{i\neq j}$ is linearly independent by Lemma~\ref{lem:lin-indep} and Lemma~\ref{lem:gi-cone}. Hence, $C_j$ is full-dimensional.
\end{proof}

        In order to continue analyzing how the sets $F_i$ intersect, it is useful to focus on a full-dimensional simplex $\conv(v^j)_{j = 1}^{k+1}$ around the origin. 
More precisely, Claim~\ref{claim:full-dim} allows us to pick $v^j \in \intr(C_j) \cap B_\varepsilon(0)$ for every $j = 1, \ldots, k+1$. 
Since $v^j \in \intr(C_j)$, we have $v^j\cdot \gp^i <0$ for all $i \neq j$. 
Then employing Lemma~\ref{lem:cone-span} with $a^i = \gp^i$ and
$b^i = v^i$, we deduce that $\cone(v^i)_{i=1}^{k+1} = \R^k$. Therefore, 
$\Delta = \conv(v^i)_{i=1}^{k+1}$ is indeed a full-dimensional simplex.

        Since $\Delta \subseteq B_\varepsilon(0) \subseteq \bigcup_{i =1,\ldots, k+1} F_i$, the sets $F_i$ form a closed cover of $\Delta$, and in particular they form a closed cover of each facet $\Delta_i = \conv(v^j)_{j \neq i}$. We will show that, for each $i = 1, \ldots, k+1$, there is a point $r^i$ in $\Delta_i$ which belongs to $\bigcap_{i \neq j} F_j$. For that, we apply the KKM Lemma (Lemma~\ref{lem:KKM}) to the simplex $\Delta_i$.

          To do so, we need to show that for every $I \subseteq \{1, \ldots, k+1\} \setminus \{i\}$, the face $\conv(v^j)_{j \in I}$ is contained in $\bigcup_{j \in I} F_j$. 
To see that this holds, take $I \subseteq\{1, \ldots, k+1\} \setminus \{i\}$.
By definition, for every $j \in I$ we have $v^j \in \intr(C_j) \cap B_\varepsilon(0) \subseteq C_I \cap B_\varepsilon(0)$. 
Since $C_I \cap B_\varepsilon(0)$ is convex, it follows that the entire face $\conv(v^j)_{j \in I}$ belongs to $C_I \cap B_\varepsilon(0)$.
 As mentioned previously, $C_I \cap B_\varepsilon(0) \subseteq \bigcup_{j \in I} F_j$ and hence the face $\conv(v^j)_{j \in I}$ is contained in $\bigcup_{j \in I} F_j$.
        
        Therefore, for each $i = 1, \ldots, k+1$,  the KKM Lemma (Lemma \ref{lem:KKM}) implies the existence  of a point $r^i \in \Delta_i$ belonging to $\bigcap_{j \neq i} F_j$ as desired.
        
        Now it is easy to see that  $r^1, \ldots, r^{k+1}$ satisfy
        property~(i) as claimed.
        Fix $i \in \{1, \ldots,
        k+1\}$. Consider $j \neq i$ and let $P \in \P_j$ contain $r^i$; notice
        that actually $r^i \in P \cap \Delta_i \subseteq P \cap
        B_\varepsilon(0)$. Since $\P \cap B_\varepsilon(0) = \mathcal{F} \cap B_\varepsilon(0)$ for some polyhedral fan $\mathcal{F}$, $P$ also contains the entire segment $[0,r]$. Since $\pi$ is affine in $P$ with gradient $\gp^j$, it follows that $\pi(r^i) - \pi(0) = r^i \cdot \gp^j$; this implies that for all $j, \ell \neq i$ we have $r^i \cdot \gp^j = r^i \cdot \gp^{\ell}$. Similarly, since $\pi'$ is a piecewise linear function compatible with $\{\P_i\}_{i=1}^{k+1}$, again we have that $\pi'(r^i) - \pi'(0) = r^i \cdot \gt^j$ for all $j \neq i$, and hence $r^i \cdot \gt^j = r^i \cdot \gt^{\ell}$ for all $j, \ell \neq i$. 

        Finally, we prove that  $r^1, \ldots, r^{k+1}$ satisfy property~(ii)
        as claimed.  Because $r^i \in \bigcap_{j \neq i} F_j$, Claim~\ref{claim:Hi_interior} directly implies that $r^i \notin H_j$ for every $j \neq i$, namely $r^i\cdot \gp^j > 0$ when $j \neq i$. Now using Lemma~\ref{lem:cone-span} with $a^i = -\gp^i$ and $b^i = r^i$, we deduce that $\cone(r^i)_{i=1}^{k+1} = \R^k$. This concludes the proof of Lemma~\ref{lemma:kkmDirections}.
\end{proof}

        We finally present the system of linear equations that we consider.
        
        \begin{corollary} \label{lemma:linear-system}
                Consider vectors $a^1, a^2, \ldots, a^{k + 1} \in \mathbb{Z}^k - f$ such that $\cone(a^i)_{i=1}^{k+1} = \R^k$. Also, let $r^1, r^2, \ldots, r^{k + 1}$ be the vectors given by Lemma \ref{lemma:kkmDirections}. Then there exist $\mu_{ij} \in \R_+$, $i, j \in \{1, \ldots, k+1\}$ with $\sum_{j = 1}^{k + 1} \mu_{ij} = 1$ for all $i \in \{1, \ldots, k+1\}$ such that both $\gt^1, \ldots, \gt^{k+1}$ and $\gp^1, \ldots, \gp^{k+1}$ are solutions to the linear system 
        \begin{equation}\label{eq:linear-system}
\begin{aligned}
\textstyle\sum_{j=1}^{k+1} (\mu_{ij}a^i)\cdot \gs^j & = 1 &\qquad& \text{for all }  i \in \{1, \ldots, k+1\}, \\
r^i\cdot \gs^{j} - r^i\cdot \gs^{\ell} & = 0 & \qquad& \text{for all } i,j, \ell \in \{1, \ldots, k+1\} \textrm{ such that } i \neq j, \ell, 
\end{aligned}
\end{equation}
with variables  $g^1, \ldots, g^{k+1}\in \R^k$.
        \end{corollary}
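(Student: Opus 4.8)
The plan is to assemble the system in~\eqref{eq:linear-system} from two independent sources. The second block of equations, $r^i \cdot g^j - r^i \cdot g^\ell = 0$ for $i \neq j, \ell$, is essentially immediate: Lemma~\ref{lemma:kkmDirections}(i) asserts precisely that both $\bar g^1, \ldots, \bar g^{k+1}$ and $\tilde g^1, \ldots, \tilde g^{k+1}$ satisfy these equations, so there is nothing further to prove for that half of the system. The work is entirely in producing the first block of $k+1$ equations, one for each choice of $a^i$, together with the stated coefficients $\mu_{ij} \in \R_+$ summing to $1$.

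For the first block, fix $i \in \{1, \ldots, k+1\}$. Apply Lemma~\ref{lemma:integration} to the locally finite complete polyhedral complex $\P$, the partition $\{\P_i\}_{i=1}^{k+1}$ of its maximal cells, and the point $a^i$. This produces scalars $\mu_{i1}, \ldots, \mu_{i,k+1} \in \R_+$ with $\sum_{j=1}^{k+1} \mu_{ij} = 1$, depending only on $\P$, the partition, and $a^i$ (not on the function), such that for \emph{every} piecewise linear function $\theta$ compatible with $\{\P_i\}_{i=1}^{k+1}$ with gradients $g^1, \ldots, g^{k+1}$,
\[
\theta(a^i) = \theta(0) + \sum_{j=1}^{k+1} \mu_{ij}\, (g^j \cdot a^i).
\]
Now both $\pi$ and $\pi'$ are such functions: $\pi$ is piecewise linear with cell complex $\P$ compatible with $\{\P_i\}_{i=1}^{k+1}$ by hypothesis, and $\pi'$ is compatible with $\{\P_i\}_{i=1}^{k+1}$ by Proposition~\ref{prop:theta-compatible}. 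Since $a^i \in \Z^k - f$, the symmetry and periodicity properties recorded after the Main Goal give $\pi(a^i) = \pi'(a^i) = 1$, and minimality gives $\pi(0) = \pi'(0) = 0$. Substituting into the identity above yields $\sum_{j=1}^{k+1} \mu_{ij}\,(a^i \cdot \bar g^j) = 1$ and likewise $\sum_{j=1}^{k+1} \mu_{ij}\,(a^i \cdot \tilde g^j) = 1$, which is exactly the $i$-th equation of the first block (rewriting $\mu_{ij} a^i \cdot g^j$ as $(\mu_{ij} a^i)\cdot g^j$). Collecting the $\mu_{ij}$ over all $i$ gives the full matrix of coefficients, and combining with the second block from Lemma~\ref{lemma:kkmDirections}(i) shows that both gradient tuples solve~\eqref{eq:linear-system}.

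There is no serious obstacle here: the corollary is a bookkeeping step that packages Lemma~\ref{lemma:integration}, Proposition~\ref{prop:theta-compatible}, Lemma~\ref{lemma:kkmDirections}, and the boundary values $\pi(w - f) = \pi'(w - f) = 1$ into a single linear system. The only point requiring a moment's care is that the $\mu_{ij}$ must be the \emph{same} for $\pi$ and $\pi'$ — this is guaranteed because Lemma~\ref{lemma:integration} produces them from the partition $\{\P_i\}_{i=1}^{k+1}$ alone, before any function is chosen, and both $\pi$ and $\pi'$ are compatible with that same partition. The existence of vectors $a^i \in \Z^k - f$ with $\cone(a^i)_{i=1}^{k+1} = \R^k$ is simply assumed in the hypothesis of the corollary, so it need not be established here (it will be furnished when the corollary is invoked). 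The real content of the argument — uniqueness of the solution to~\eqref{eq:linear-system}, forcing $\bar g^i = \tilde g^i$ — is deferred to Section~\ref{sec:unique} and is not part of this statement.
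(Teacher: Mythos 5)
Your proof is correct and follows exactly the same route as the paper's (terse) argument: the first block comes from Lemma~\ref{lemma:integration} applied at each $a^i$ together with $\pi(a^i)=\pi'(a^i)=1$ (from symmetry and periodicity of minimal valid functions) and Proposition~\ref{prop:theta-compatible}, while the second block is Lemma~\ref{lemma:kkmDirections}(i) verbatim. Your explicit observation that the $\mu_{ij}$ depend only on $\P$, the partition, and $a^i$ — not on the function — is precisely the point that makes the shared system well-defined, and it is correctly justified by the universally quantified ``for every function $\theta$'' in the statement of Lemma~\ref{lemma:integration}.
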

        
        \begin{proof}
                Feasibility for the first set of constraints follows directly from the minimality of $\pi$ and $\pi'$, Theorem~\ref{thm:minimal} and Lemma~\ref{lemma:integration}. Feasibility for the second set of constraints follows from Lemma \ref{lemma:kkmDirections} (i). 
        \end{proof}
        
        We remark that we can always find vectors $a^1, a^2, \ldots, a^{k + 1} \in \mathbb{Z}^k - f$ such that $\cone(a^i)_{i=1}^{k+1} = \R^k$, so the system above indeed exists. 


\subsection{Unique solution of the linear system}\label{sec:unique}

We now analyze the solution set of~\eqref{eq:linear-system}, which will be rewritten as a system of $k(k+1)$ linear equations for the $k+1$ gradient vectors, i.e., in $k(k+1)$ variables. We will show the gradients of $\pi$\ and $\pi'$\ coincide by demonstrating  that this system either has  no solutions or has a unique solution.
Recall from linear algebra that, given a square matrix $A$ and a vector $b$, if the augmented matrix $[b\; A]$ has full row rank, then the linear system $Ay = b$ either has no solutions or has a unique solution.

\begin{prop}\label{prop:gradients-equal}
$\gp^i = \gt^i$ for every $i=1, \ldots, k+1$.
\end{prop}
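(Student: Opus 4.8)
The plan is to invoke the linear‑algebra fact recalled just above the statement: it suffices to show that the augmented matrix $[b\; A]$ of the system~\eqref{eq:linear-system} has full row rank, since then $Ay=b$ has at most one solution, while Corollary~\ref{lemma:linear-system} already exhibits two solutions, $(\gp^1,\dots,\gp^{k+1})$ and $(\gt^1,\dots,\gt^{k+1})$. Before doing this I would tidy the system so that $A$ is square: for each fixed $i$, all equations $r^i\cdot \gs^j=r^i\cdot \gs^\ell$ with $j,\ell\neq i$ follow, by transitivity, from the $k-1$ equations $r^i\cdot \gs^j=r^i\cdot \gs^{j_i}$ with $j\neq i,j_i$ for one fixed reference index $j_i\neq i$, and these $k-1$ linear forms are visibly independent. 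Replacing the second block by these yields an equivalent system with exactly $(k+1)+(k+1)(k-1)=k(k+1)$ equations in the $k(k+1)$ coordinates of $\gs^1,\dots,\gs^{k+1}$.

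Now suppose some linear combination of the rows of $[b\; A]$ is the zero row. Encode it by scalars $\lambda_i$ on the first‑type equations ($i=1,\dots,k+1$) and, for each block $i$, by a coefficient vector $(\xi_{ij})_{j\neq i}$ on the second‑type equations; since each second‑type equation has coefficients $+1$ and $-1$, any combination of them has coefficient sum zero, so $\sum_{j\neq i}\xi_{ij}=0$ for every $i$. The right‑hand side of the combination is $\sum_i\lambda_i$ (first‑type equations have right‑hand side $1$, second‑type ones $0$), so $\sum_i\lambda_i=0$. For each $n$, the coefficient of $\gs^n$ — a vector in $\R^k$ — must vanish; since the $n$‑th block of second‑type equations does not involve $\gs^n$, this reads $\sum_i\lambda_i\mu_{in}a^i+\sum_{i\neq n}\xi_{in}r^i=0$. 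Summing over $n$ and using $\sum_n\mu_{in}=1$ and $\sum_{n\neq i}\xi_{in}=0$ gives $\sum_i\lambda_i a^i=0$.

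The two cone hypotheses finish the argument. Since $\cone(a^i)_{i=1}^{k+1}=\R^k$, the vectors $a^1,\dots,a^{k+1}$ are affinely independent: otherwise they lie in a common affine hyperplane $H$, whence $\cone(a^i)_{i=1}^{k+1}\subseteq\cone(H)$ would be a proper subcone of $\R^k$. Hence the only $(\lambda_i)$ with $\sum_i\lambda_i a^i=0$ and $\sum_i\lambda_i=0$ is $\lambda=0$. Feeding this back, the vanishing of the coefficient of $\gs^n$ becomes $\sum_{i\neq n}\xi_{in}r^i=0$; and since $\cone(r^i)_{i=1}^{k+1}=\R^k$ by Lemma~\ref{lemma:kkmDirections}(ii), Lemma~\ref{lem:lin-indep} shows $\{r^i: i\neq n\}$ is linearly independent, so $\xi_{in}=0$ for all $i\neq n$. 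As $n$ was arbitrary, all coefficients vanish, the only vanishing row combination is trivial, so $[b\; A]$ has full row rank; the system therefore has a unique solution, and $\gp^i=\gt^i$ for every $i$.

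I expect the only real care to be the bookkeeping in the second paragraph: translating "a vanishing linear combination of rows" into the data $(\lambda_i)$ and $(\xi_{ij})_{j\neq i}$, tracking which blocks contribute to the coefficient of $\gs^n$, and using the identity $\sum_{j\neq i}\xi_{ij}=0$ that makes the $r^i$‑terms cancel on summation. No individual computation is hard; the conceptual weight is carried by the two structural facts that $\cone(a^i)_{i=1}^{k+1}=\R^k$ forces affine independence of the $a^i$ (killing the $\lambda$'s once their sum is known to vanish) and that $\cone(r^i)_{i=1}^{k+1}=\R^k$ forces every $k$‑subset of the $r^i$ to be a basis (killing the $\xi$'s).
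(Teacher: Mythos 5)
Your argument is correct, and it takes a genuinely different route from the paper's. The paper keeps all the redundant equations $r^i\cdot\gs^j = r^i\cdot\gs^\ell$, introduces auxiliary variables $z\in\R^{k+1}$ to encode the common value $r^i\cdot\gs^j$ for $j\neq i$, pads the augmented matrix with a dummy zero block $D$, and then performs explicit block column operations (adding the $g$-columns to $D$ and multiplying the $z$-columns on the right by $\bar R$) to arrive at a block upper-triangular matrix whose diagonal blocks are the affine matrix $A' = [\,\mathbf 1\ \ a^i\,]$ and the matrices $R_1,\dots,R_{k+1}$. You instead prune the second family to the $k-1$ independent difference equations per block, making $A$ square directly, and then argue ``in the left kernel'': you take an arbitrary vanishing linear combination of the rows of $[b\;A]$, encoded by $(\lambda_i)$ and $(\xi_{ij})_{j\neq i}$ with the built-in constraint $\sum_{j\neq i}\xi_{ij}=0$, and extract $\sum_i\lambda_i=0$ from the right-hand side, $\sum_i\lambda_i a^i=0$ by summing the per-$\gs^n$ conditions over $n$, and then $\xi\equiv 0$ from the per-$n$ conditions once $\lambda$ is gone. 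The two structural facts doing the work are the same in both proofs — affine independence of $a^1,\dots,a^{k+1}$ (equivalently, $A'$ has full rank) and linear independence of every $k$-subset of $\{r^1,\dots,r^{k+1}\}$ (equivalently, each $R_i$ is invertible), both consequences of the two cone hypotheses via Lemma~\ref{lem:lin-indep} — but your dual formulation dispenses with the auxiliary $z$ variables and the padding-and-column-operations machinery, exposing directly how $\sum_n\mu_{in}=1$ and $\sum_{j\neq i}\xi_{ij}=0$ conspire to decouple the $\lambda$'s from the $\xi$'s. One minor presentational care: your summation step implicitly treats $\xi_{ij_i}$ as a dependent quantity equal to $-\sum_{j\neq i,j_i}\xi_{ij}$, which is fine, but it is worth making explicit that vanishing of all $\xi_{ij}$, $j\neq i$, is equivalent to vanishing of the original $k-1$ combination coefficients per block.
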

\begin{proof}
We wish to show that the system~\eqref{eq:linear-system} either has no solution or a unique solution.
We begin by rewriting the system in terms of some new variables. Since for any fixed~$i$, the value of  $r^i g^j$ must coincide for all $j=1, \ldots, k+1$, $i\neq j$, we reformulate the system~\eqref{eq:linear-system} by introducing $z \in \R^{k+1}$ such that $z_i$ is this value. We can rewrite the system \eqref{eq:linear-system} as 
\begin{equation}\label{eq:linear-system2}
\begin{aligned}
\textstyle\sum_{j=1}^{k+1} (\mu_{ij}a^i)\cdot \gs^j & = 1 &\qquad& \text{for all } i = 1, \ldots, k+1 \\
r^i\cdot \gs^{j} - z_i & = 0 & \qquad& \text{for all } i,j = 1, \ldots, k+1, \text{ such that } i \neq j.
\end{aligned}
\end{equation}

Note that there is a one-to-one mapping between solutions of \eqref{eq:linear-system} and \eqref{eq:linear-system2}. We now rearrange the variables and the constraints of \eqref{eq:linear-system2} so that it can be represented as $Ay = b$, where
$$
A = \left[ \begin{array}{cc} \Delta & O_{k+1 \times k+1}\\ R & I' \end{array}  \right],\quad y = [\gs^1, \ldots, \gs^{k+1}, z]^T, \quad \text{and} \quad b = [1, \ldots, 1, O_{1\times k} \ldots, O_{1\times k}]^T,
$$
where $\Delta$ is a $(k+1) \times k(k+1)$-matrix, $R$ is a $k(k+1) \times k(k+1)$-matrix, $I'$ is a $k(k+1) \times (k+1)$-matrix and $O_{i \times j}$ is the $i\times j$-matrix with all zero entries.
The $i$-th row of $\Delta$, $i=1, \ldots, k+1$, is given by $(\mu_{i1}a^i, \ldots, \mu_{i(k+1)}a^i)$ where $a^i$ is written as a row vector.
The matrix $R$ has a block diagonal structure: 
$$
R = \left[ \begin{array}{ccc} R_1 &&\\& \ddots & \\ && R_{k+1}          \end{array}\right],
$$ 
where each $R_i$ is a $k\times k$-matrix. For each $i=1, \ldots, k+1$, the matrix $R_i$ has rows $r^j$, $j \neq i$.

The matrix $I'$ has entries corresponding to the coefficients on $z$, and will be written as
$$
I' = \left[ \begin{array}{c} -I_1 \\ \vdots \\ -I_{k+1}\end{array}\right],
$$
where $I_i$ is a $k\times (k+1)$-matrix obtained from the $k\times k$ identity matrix with the $0$ column inserted as the $i$-th column.

We now argue that the matrix $[b \; A]$ has full row rank. Since $A$ is a $(k+1)^2 \times (k+1)^2$ square matrix, if $[b\;  A]$ has full row rank, the system $Ay = b$ either has a unique solution or no solution. 

We use one further trick to prove $[b \; A]$ has full row rank: we analyze the row rank of the matrix $[b\;  D \;A]$, where $D$ is a $(k+1)^2 \times k$-matrix of all zero entries. The rank of $[b \; D \; A]$ is the same as $[b \; A]$ and we now show that $[b \; D \; A]$ has full row rank. We now perform the  block row and column operations on the matrix

\begin{align*}
[\,b\mid D \mid A\,] 
=& 
{\left[ \begin{array}{c|c|cccc} 1 & O_{1\times k} &  \mu_{11}a^1 & \ldots &  \mu_{1(k+1)}a^1  & O_{1\times k} \\
                                                \vdots & \vdots &  \vdots && \vdots & \vdots \\
                                                1 & O_{1\times k} &  \mu_{(k+1)1}a^{k+1} & \ldots &  \mu_{(k+1)(k+1)}a^{k+1}  & O_{1\times k} \\ \hline
                                                O_{k\times 1} & O_{k\times k} & R_1 & & & -I_1 \\ 
                                                \vdots & \vdots & & \ddots & & \vdots \\
                                                O_{k\times 1} & O_{k\times k} &&&R_{k+1} & -I_{k+1}
                                                \end{array}\right]}.\\
\intertext{First, add all the block columns of $A$ corresponding to each $g^1, \ldots, g^{k+1}$ to
the block $D$, giving (recall that $\sum_{j=1}^{k+1}\mu_{ij} = 1$ for all $i \in \{1, \ldots, k+1\}$)}
&               {\left[ \begin{array}{c|c|cccc} 1 & a^1 &  \mu_{11}a^1 & \ldots &  \mu_{1(k+1)}a^1  & O_{1\times k} \\
                                                \vdots & \vdots &  \vdots && \vdots & \vdots \\
                                                1 & a^{k+1} &  \mu_{(k+1)1}a^{k+1} & \ldots &  \mu_{(k+1)(k+1)}a^{k+1}  & O_{1\times k}\\ \hline
                                                O_{k\times 1} & R_1 & R_1 & & & -I_1 \\ 
                                                \vdots & \vdots & & \ddots & & \vdots \\
                                                O_{k\times 1} & R_{k+1} &&&R_{k+1} & -I_{k+1}
                                                \end{array}\right]}.\\
\intertext{%
Second, in the last matrix above, multiply the last block of $k+1$ columns (corresponding to the variables~$z_i$)  on the right by the matrix $\bar R$, which is the $(k+1)\times k$ matrix whose rows are the $k+1$ vectors $r^i$, for all $i=1, \ldots, k+1$. Note that $-I_i\bar R = -R_i$ for every $i = 1, \ldots, k+1$. Hence, if we multiply the last block of columns with $\bar R$ and add to the second block of columns in the last matrix above, we obtain}
&               {\left[ \begin{array}{cc|ccc|c} 1 & a^1 &  \mu_{11}a^1 & \ldots &  \mu_{1(k+1)}a^1  & O_{1\times k} \\
                                                \vdots & \vdots &  \vdots && \vdots & \vdots \\
                                                1 & a^{k+1} &  \mu_{(k+1)1}a^{k+1} & \ldots &  \mu_{(k+1)(k+1)}a^{k+1}  & O_{1\times k}\\ \hline
                                                O_{k\times 1} & O_{k\times k} & R_1 & & & -I_1 \\ 
                                                \vdots & \vdots & & \ddots & & \vdots \\
                                                O_{k\times 1} & O_{k\times k} &&&R_{k+1} & -I_{k+1}
                                                \end{array}\right]}.
\end{align*}
 The final matrix has an upper triangular block structure. The blocks on the diagonal are 
$$A' = \left[\begin{array}{rc}1&  a^1 \\ \vdots &\vdots \\ 1& a^{k+1}\end{array}\right] \text{ and } R_1, \ldots, R_{k+1}.$$
Each $R_i$ has full row rank since every proper subset of $\{r^1, \ldots, r^{k+1}\}$ is linearly independent by Lemma~\ref{lem:lin-indep}. Also, $A'$ has full row rank because $a^1, \ldots, a^{k+1}$ are affinely independent since $\cone(a^i)_{i=1}^{k+1} = \R^k$. Hence, we have shown that $[b\; D\; A]$ has full row rank.

Therefore the system \eqref{eq:linear-system} has either no solutions or has a unique solution.  Since Corollary~\ref{lemma:linear-system} shows that both $\gt^1, \ldots, \gt^{k+1}$ and $\gp^1, \ldots, \gp^{k+1}$ are solutions to~\eqref{eq:linear-system}, it follows that  $\gt^i = \gp^i$ for all $i = 1, \ldots, k+1$.
\end{proof}

\subsection{Conclusion of the proof} \label{sec:conclusion}

 Since both $\pi$ and $\pi'$ are minimal, Theorem \ref{thm:minimal} guarantees that $\pi(0) = \pi'(0) = 0$. Proposition~\ref{prop:gradients-equal} shows that $\gp^i = \gt^i$ for all $i=1, \ldots, k+1$.  From Lemma \ref{lemma:integration}, for every $r \in \R^k$ there exist $\mu_1, \mu_2, \ldots, \mu_{k+1}$ such that $$\pi(r) = \pi(0) + \sum_{i=1}^{k + 1} \mu_i (\gp^i \cdot r) = \pi'(0) + \sum_{i=1}^k \mu_i (\gt^i\cdot r) = \pi'(r).$$ This proves that $\pi = \pi'$ and concludes the proof of Theorem \ref{thm:main}. 


\clearpage
\appendix

\section{Appendix}
\subsection{Proof of the existence of minimal valid functions -- Theorem \ref{thm:minimal1}}\label{sec:minimality-proof}
\begin{proof}[Proof of Theorem \ref{thm:minimal1}]
Consider the non-empty set $\Sigma$ of valid functions $\pi'$ with $\pi' \leq \pi$ (the set is non-empty because $\pi \in \Sigma$). We now consider $(\Sigma, \leq)$ as a partially ordered set, where the partial order is imposed by the relation $\pi_1 \leq \pi_2$ for $\pi_1, \pi_2 \in \Sigma$. If can show that every chain in $(\Sigma, \leq)$ has a lower bound, then applying Zorn's lemma we would conclude that $\Sigma$ has a minimal element which will be the minimal function $\pi'$ we are looking for.

Consider any chain $\mathcal{C}$ in $(\Sigma, \leq)$, i.e., for $\pi_1, \pi_2 \in \mathcal{C}$ either $\pi_1 \leq \pi_2$ or $\pi_2 \leq \pi_1$. 
Consider the function $\pi_{\mathcal{C}}$ defined as follows: $\pi_{\mathcal{C}}(r) = \inf_{\pi' \in \mathcal{C}}\pi'(r)$. 
We claim that $\pi_{\mathcal{C}} \in \Sigma$. 
We only need to verify that it is a valid function; it is clear that $\pi_{\mathcal{C}} \leq \pi$. 
Since $\pi' \geq 0$ for all $\pi' \in \mathcal{C}$, $\pi_\mathcal{C} \geq 0$. 

Suppose to the contrary that there exists $s \geq 0$ with finite support such that $f + \sum_{r\in \R^k}rs_r \in \Z^k$, but $\sum_{r\in \R^k}\pi_{\mathcal{C}}(r)s_r < 1$. Let $\{r^1, \ldots, r^n\}$ be the finite support of $s$, i.e., $s_r = 0$ for all $r \not\in\{r^1, \ldots, r^n\}$. 
Let $S = \max\{s_{r^1}, \ldots, s_{r^n}\}$ and let $\varepsilon = 1 - \sum_{r\in \R^k}\pi_{\mathcal{C}}(r)s_r > 0$. Since $\pi_{\mathcal{C}}(r^i) = \inf_{\pi' \in \mathcal{C}}\pi'(r^i)$, there exists $\pi_i \in \mathcal{C}, i= 1, \ldots, n$ such that $\pi_i(r^i) \leq \pi_{\mathcal{C}}(r^i) + \frac{\varepsilon}{2nS}$. Since $\mathcal{C}$ is a chain, there exists $i^* \in \{1, \ldots, n\}$ such that $\pi_{i^*} \leq \pi_i$ for all $i \in \{1, \ldots, n\}$. Hence, $\pi_{i^*}(r^i) \leq \pi_{\mathcal{C}}(r^i) + \frac{\varepsilon}{2nS}$ for every $i \in \{1, \ldots, n\}$. But then $$\sum_{r\in \R^k}\pi_{i^*}(r)s_r \leq \sum_{r\in \R^k}\pi_{\mathcal{C}}(r)s_r + \sum_{i=1}^n\frac{\varepsilon}{2nS}s_{r^i} \leq 1 - \varepsilon + \frac{\varepsilon}{2nS}nS < 1,$$ which shows that $\pi_{i^*}$ is not a valid function, which is a contradiction because $\pi_{i^*}\in \Sigma$.
\end{proof}


\subsection{Facet Theorem} \label{sec:facet}

        The next lemma shows that a weaker condition than that in the definition of a facet is enough to guarantee facetness. 

\begin{lemma}\label{lemma:minimalsuffices}
 Let $\pi$ be minimal valid function. Suppose that for every minimal valid function~$\pi^*$, we have that $S(\pi) \subseteq S(\pi^*)$ implies $\pi^* = \pi$. Then $\pi$ is a facet.
\end{lemma}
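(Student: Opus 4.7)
The plan is to derive the full facet property from the weaker hypothesis that only addresses minimal dominators. Given an arbitrary valid $\pi'$ with $S(\pi) \subseteq S(\pi')$, the goal is to conclude $\pi' = \pi$. The first step is to invoke Theorem~\ref{thm:minimal1} to obtain a minimal valid function $\pi^* \leq \pi'$, and then to verify $S(\pi) \subseteq S(\pi^*)$: any $s \in S(\pi)$ also lies in $S(\pi')$ by hypothesis, so $\sum_r \pi'(r)s_r = 1$; combined with $\pi^* \leq \pi'$ this gives $\sum_r \pi^*(r)s_r \leq 1$, while validity of $\pi^*$ provides the reverse inequality. Hence $s \in S(\pi^*)$. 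The hypothesis of the lemma then forces $\pi^* = \pi$, and in particular $\pi \leq \pi'$ pointwise.

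The second step is to promote $\pi \leq \pi'$ to equality. For this I will use the symmetry condition satisfied by $\pi$ (available by Theorem~\ref{thm:minimal} since $\pi$ is minimal). For each $r_0 \in \R^k$ with $r_0 \neq -f-r_0$, consider the finitely supported $s$ with $s_{r_0} = s_{-f-r_0} = 1$ and $s_r = 0$ otherwise. This $s$ is feasible for~\eqref{IR}, since $f + r_0 + (-f-r_0) = 0 \in \Z^k$, and it lies in $S(\pi)$ because $\pi(r_0) + \pi(-f-r_0) = 1$ by symmetry. Therefore $s \in S(\pi')$, which means $\pi'(r_0) + \pi'(-f-r_0) = 1$; combined with the pointwise inequalities $\pi'(r_0) \geq \pi(r_0)$ and $\pi'(-f-r_0) \geq \pi(-f-r_0)$ whose sum already equals $1$, both must be equalities. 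In the degenerate case $r_0 = -f-r_0$, take instead $s_{r_0} = 2$ to obtain the same conclusion. Since $r_0$ was arbitrary, $\pi' = \pi$ everywhere, establishing that $\pi$ is a facet.

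The proof has no real obstacle; the only point requiring care is the construction of the two-point certificate in $S(\pi)$ at every $r_0$, which is exactly what turns the inclusion $S(\pi) \subseteq S(\pi')$ into pointwise equality once we already know $\pi \leq \pi'$.
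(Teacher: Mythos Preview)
Your proof is correct and uses essentially the same two ingredients as the paper's proof: the symmetry-based two-point certificate $s_{r_0}=s_{-f-r_0}=1$ in $S(\pi)$, and Theorem~\ref{thm:minimal1} to pass to a minimal $\pi^*\le\pi'$ with $S(\pi)\subseteq S(\pi^*)$. The only difference is order of presentation---you argue directly (first obtain $\pi\le\pi'$, then upgrade to equality via symmetry) whereas the paper argues by contradiction (first use symmetry to find a strict inequality, then derive $\pi^*\neq\pi$); your explicit treatment of the degenerate case $r_0=-f-r_0$ is a small bonus.
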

        
        \begin{proof}
                Consider any valid function $\pi'$ (not necessarily minimal) such that $S(\pi) \subseteq S(\pi')$; we show that $\pi' = \pi$.
                
                Suppose to the contrary that there exists $r_1 \in \R^k$ such that $\pi(r_1) \neq \pi'(r_1)$. We claim that actually there is $r_2$ such $\pi(r_2) > \pi'(r_2)$. To see this, first notice that the symmetry condition of $\pi$ (via Theorem \ref{thm:minimal}) guarantees that $\pi(r_1) + \pi(-f -r_1) = 1$. Moreover, it is clear that the solution $\bar{s}$ given by $\bar{s}_{r_1} = \bar{s}_{-f - r_1} = 1$ and $\bar{s}_r = 0$ otherwise is feasible; together, these observations imply that $\bar{s} \in S(\pi)$. Since $S(\pi) \subseteq S(\pi')$, we have that $\bar{s} \in S(\pi')$ and hence $$\pi'(r_1) + \pi'(-f - r_1) = \sum_{r \in \R^k} \pi'(r) \bar{s}_r = 1 = \pi(r_1) + \pi(-f -r_1).$$ Since $\pi(r_1) \neq \pi'(r_1)$, it follows that either $\pi(r_1) > \pi'(r_1)$ or $\pi(-f -r_1) > \pi'(-f - r_1)$, and the claim holds.  
                
                Now consider a minimal valid function $\pi^* \leq \pi'$ (which exists by Theorem \ref{thm:minimal1}). Notice that $S(\pi') \subseteq S(\pi^*)$: for $\bar{s} \in S(\pi')$, using its validity we get $1 \le \sum_{r \in \R^k} \pi^*(r) \bar{s}_r \le \sum_{r \in \R^k} \pi'(r) \bar{s}_r = 1$, hence equality hold throughout and $\bar{s} \in S(\pi^*)$. Since $S(\pi) \subseteq S(\pi')$, we get that $S(\pi) \subseteq S(\pi^*)$. However, $\pi \neq \pi^*$, since there is $r_2$ such that $\pi(r_2) > \pi'(r_2) \ge \pi^*(r_2)$. This contradicts the assumptions on $\pi$, which concludes the proof. 
        \end{proof}

        \begin{proof}[Proof of Theorem \ref{thm:facet-theorem}]
                By Lemma \ref{lemma:minimalsuffices}, all we need to show is that for every minimal valid function $\pi'$, $S(\pi) \subseteq S(\pi')$ implies $\pi' = \pi$. We simply show that for every minimal valid function $\pi'$, $S(\pi) \subseteq S(\pi')$ implies $E(\pi) \subseteq E(\pi')$.
                
                So let $\pi'$ be a minimal valid function with $S(\pi) \subseteq S(\pi')$. Consider any $(r_1, r_2) \in E(\pi)$, namely such that $\pi(r_1) + \pi(r_2) = \pi(r_1 + r_2)$. Notice that the solution $\bar{s}$ given by $\bar{s}_{r_1} = \bar{s}_{r_2} = \bar{s}_{-f-r_1-r_2} = 1$ and $\bar{s}_r = 0$ is feasible. Moreover, using symmetry condition of $\pi$ we get that $\bar{s} \in S(\pi)$. Indeed, $$\sum_{r \in \R^k} \pi(r) \bar{s}_r = \pi({r_1}) + \pi({r_2}) + \pi({-f -(r_1 + r_2)}) = \pi({r_1 + r_2}) + \pi({-f -(r_1 + r_2)}) = 1.$$ Since $S(\pi) \subseteq S(\pi')$, the solution $\bar{s}$ also belongs to $S(\pi')$, and now the symmetry condition of $\pi'$ gives $$1 = \sum_{r \in \R^k} \pi'(r) \bar{s}_r = \pi'({r_1}) + \pi'({r_2}) + \pi'({-f - r_1 - r_2}) = \pi'({r_1}) + \pi'({r_2}) + (1 - \pi'({r_1 + r_2})).$$ Thus, $\pi'({r_1}) + \pi'({r_2}) = \pi'({r_1 + r_2})$ and $(r_1, r_2) \in E(\pi')$. This concludes the proof. 
                \end{proof}


\subsection{Proof that Theorem \ref{thm:main} generalizes Theorem 3 of \cite{3slope}} \label{sec:generalizes}

We restate Theorem 3 of \cite{3slope} here using our terminology.
A \emph{direction} of a piecewise linear function $\pi$ with cell complex $\P$
is a linear space parallel to a one-dimensional element of~$\P$, such as an
edge. 

        \begin{theorem}[Theorem 3 of \cite{3slope}] Let $\pi\colon \R^2 \rightarrow \R$ be a minimal valid function. If $\pi$ is piecewise linear with a locally finite cell complex, has 3 slopes and has 3 directions, then $\pi$ is extreme. 
        \end{theorem}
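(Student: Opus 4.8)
The plan is to deduce this statement from Theorem~\ref{thm:main} by matching hypotheses. With $k=2$, the assumptions ``minimal valid'', ``piecewise linear with a locally finite cell complex'', and ``$3$ slopes'' are exactly the first three hypotheses of Theorem~\ref{thm:main} (note $3 = k+1$). So the only thing left to verify is that a minimal valid piecewise linear function with $3$ directions is genuinely $2$-dimensional; granting that, Theorem~\ref{thm:main} immediately gives that $\pi$ is a facet, hence extreme.

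I would prove ``$3$ directions $\Rightarrow$ genuinely $2$-dimensional'' by contraposition. Suppose $\pi=\varphi\circ T$ for some $\varphi\colon\R\to\R$ and linear $T\colon\R^2\to\R$. Since $\pi$ is minimal valid, Theorem~\ref{thm:minimal} gives $\pi(0)=0$ together with the symmetry condition $\pi(r)+\pi(-f-r)=1$, which no constant function satisfies; hence $\pi$ is not constant and $T\neq 0$, so $L:=\ker T$ is a line and $\pi(r+\ell)=\pi(r)$ for all $r$ and all $\ell\in L$. On each maximal cell $P$ of the cell complex~$\P$ one has $\pi(r)=g^P\cdot r+\delta_P$, and constancy along $L$ forces $g^P\perp L$; since $L^\perp$ is one-dimensional, every gradient of $\pi$ is a scalar multiple of a fixed vector $n$ spanning $L^\perp$.

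Next I would show that every one-dimensional element (edge) $e$ of $\P$ is parallel to $L$, contradicting the hypothesis of three directions. If $e$ lies in the common boundary of two maximal cells $P,P'$ with distinct gradients $c\,n$ and $c'\,n$ ($c\neq c'$), then continuity of $\pi$ on $e$ gives $c\,(n\cdot r)+\delta_P = c'\,(n\cdot r)+\delta_{P'}$ for $r\in e$, so $n\cdot r$ is constant on $e$ and $e\parallel L$. The only edges that could point in a different direction are therefore the \emph{redundant} ones separating two cells carrying the same gradient. Ruling those out --- equivalently, taking $\P$ to be the natural complex in which $\pi$ is affine precisely on the strips $\{\,r\st T(r)\in[t_j,t_{j+1}]\,\}$ cut out by the (locally finite) breakpoints $t_j$ of $\varphi$, all of whose edges are parallel to $L$ --- is the one delicate point of the argument, and is the step I expect to require the most care. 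This is exactly the content of the remark that ``genuinely $2$-dimensional'' is a weaker assumption than ``$3$ directions'': the three-direction hypothesis of~\cite{3slope} is there precisely to exclude $3$-slope functions that are constant in some direction, which are exactly the non-genuinely-$2$-dimensional ones. Once this reduction is in place, the contradiction is complete and the theorem follows from Theorem~\ref{thm:main}.
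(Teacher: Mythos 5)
Your high-level strategy matches the paper's: both reduce the statement to checking that the hypotheses of the $3$-slope theorem imply ``genuinely $2$-dimensional,'' and then invoke Theorem~\ref{thm:main}. But you and the paper diverge at the crucial step. The paper first cites Lemma~3.6 of~\cite{3slope} to conclude that $\cone(\gp^i)_{i=1}^3 = \R^2$, and then proves a short lemma: a piecewise linear function whose gradient set positively spans $\R^k$ is genuinely $k$-dimensional. The proof of that lemma is precisely the observation you already make — if $\theta = \varphi\circ T$ then every gradient $g^P$ is orthogonal to $\ker T$, so the gradient cone lies in a hyperplane — and the paper stops there, contradicting $\cone(\gp^i)_{i=1}^3=\R^2$. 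You, instead, try to push further and show directly that every edge of $\P$ is parallel to $L=\ker T$, contradicting the ``$3$ directions'' hypothesis without an intermediate gradient-cone statement.

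That extra step is where the gap lies, and you correctly flag it: your parallelism argument only constrains edges separating cells with \emph{distinct} gradients, so ``redundant'' edges between cells of equal gradient could point anywhere, and the hypothesis does not say which cell complex carries the three directions. Appealing to ``the natural complex'' is not free: the paper's definition of a piecewise linear function does not force $\P$ to be the coarsest complex on which $\pi$ is affine, and the number of directions is a property of the chosen $\P$, not of $\pi$ alone. Even granting the coarsest complex, you would still need to argue that its maximal cells are exactly the strips $\{r : T(r)\in[t_j,t_{j+1}]\}$, which requires another argument about breakpoints of $\varphi$. The paper's route avoids all of this by delegating the hard work (extracting the gradient-cone condition from minimality, $3$ slopes, and $3$ directions) to Lemma~3.6 of~\cite{3slope}, and then using the gradient-perpendicularity observation only once, in the clean form ``$\cone(a^i)=\R^k \Rightarrow$ genuinely $k$-dimensional.'' If you cite that lemma of~\cite{3slope}, you can drop the edge argument entirely and your own first two paragraphs already complete the proof.
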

        
        Consider $\pi$ satisfying the hypothesis of the above theorem; it suffices to show that $\pi$ satisfies the hypothesis of Theorem \ref{thm:main}. So let $\{a^i\}_{i =1}^3$ be the gradient set of $\pi$. Lemma 3.6 of \cite{3slope} implies that $\cone(a^i)_{i=1}^3 = \R^3$. The next lemma, which provides a partial converse to Lemma \ref{lem:gi-cone}, shows that this property guarantees that $\pi$ is genuinely 3-dimensional; this implies that $\pi$ satisfies the hypothesis of Theorem~\ref{thm:main} and concludes the proof. 

        \begin{lemma}
                Let $\theta \colon \R^k \rightarrow \R$ be a piecewise linear function with gradient set $\{a^i\}_{i \in I}$. If $\cone(a^i)_{i \in I} = \R^k$, then $\theta$ is genuinely $k$-dimensional. 
        \end{lemma}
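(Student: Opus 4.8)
The plan is to prove the contrapositive: assume $\theta$ is \emph{not} genuinely $k$-dimensional and show that $\cone(a^i)_{i\in I}\neq\R^k$. By definition of not being genuinely $k$-dimensional, there exist a function $\varphi\colon\R^{k-1}\to\R$ and a linear map $T\colon\R^k\to\R^{k-1}$ with $\theta=\varphi\circ T$. Since $T$ cannot be injective (it maps into a lower-dimensional space), its kernel contains a nonzero vector; pick $d\in\ker T\setminus\{0\}$. Then for every $x\in\R^k$ and every $\lambda\in\R$ we have $T(x+\lambda d)=T(x)$, hence $\theta(x+\lambda d)=\theta(x)$, i.e., $\theta$ is constant along every line in direction $d$.

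The key step is to translate this invariance into a statement about the gradients $a^i$. Fix any maximal cell $P$ of the cell complex $\P$ on which $\theta$ has gradient $a^i$, and pick an interior point $x_0\in\intr(P)$. For $\lambda$ in a small enough neighborhood of $0$, the point $x_0+\lambda d$ stays in $\intr(P)$, so $\theta(x_0+\lambda d)=a^i\cdot(x_0+\lambda d)+\delta_P$. But this quantity equals $\theta(x_0)$ for all such $\lambda$ by the previous paragraph, which forces $\lambda\,(a^i\cdot d)=0$ for all small $\lambda$, hence $a^i\cdot d=0$. Since $i\in I$ was arbitrary, we conclude $a^i\cdot d=0$ for every $i\in I$; that is, every gradient lies in the hyperplane $d^\perp$. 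Consequently $\cone(a^i)_{i\in I}\subseteq d^\perp\subsetneq\R^k$, so $\cone(a^i)_{i\in I}\neq\R^k$, completing the contrapositive.

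I expect the only mildly delicate point to be the bookkeeping around cells: one should make sure that every gradient $a^i$ in the gradient set is actually attained on some \emph{full-dimensional} cell (which holds by the definition of the gradient set via the equivalence relation $\sim$ on maximal cells, together with purity and completeness of $\P$), so that an interior point $x_0$ with a genuine neighborhood exists and the local linear formula $\theta(r)=a^i\cdot r+\delta_P$ applies on an open set. Everything else is immediate from the definition of a piecewise linear function and the invariance $\theta(\cdot+\lambda d)=\theta(\cdot)$. No topological input is needed here, in contrast to Lemma~\ref{lem:gi-cone}; this lemma is genuinely easier and only supplies the converse direction used in Appendix~\ref{sec:generalizes}.
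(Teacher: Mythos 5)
Your proof is correct and follows essentially the same route as the paper's: take a nonzero $d\in\ker T$, use full-dimensionality of each maximal cell to find two points differing by a multiple of $d$, and conclude $a^i\cdot d=0$ for every gradient, so the gradients cannot positively span $\R^k$. The only cosmetic difference is that the paper phrases it as a proof by contradiction and picks $x,y\in P$ with $y-x=\lambda v$ directly, whereas you move from an interior point along $d$ for small $\lambda$; these are the same argument.
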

        
        \begin{proof}
By means of contradiction, suppose that $\theta$ is not genuinely $k$-dimensional. 
So consider a function $\varphi\colon \R^{k-1} \rightarrow \R$ and a linear map $T \colon \R^k \rightarrow \R^{k - 1}$ such that $\theta = \varphi \circ T$. 
Notice that the kernel of $T$ contains some non-zero vector, and let $v$ be one such vector. 
                
                Let $\theta$ be a piecewise linear function with cell complex $\P$ and take a maximal cell $P \in \P$; we claim that $a^P \cdot v = 0$. Since $P$ is full-dimensional, we can find $x,y \in P$ such that $y - x = \lambda v$ for some $\lambda \neq 0$. Since $T(v) = 0$, we have $\pi(y) = \pi(x + \lambda v) = \varphi(T(x + \lambda v)) = \varphi(T(x)) = \pi(x)$. Moreover, by definition of $\P$, we have that $\pi(r) = a^P \cdot r + \delta_P$ for all $r \in P$. Putting the two previous observations together, we get that $0 = \pi(y) - \pi(x) = a^P \cdot (y - x) = \lambda a^P \cdot v$. Since $\lambda \neq 0$, this implies that $a^P \cdot v = 0$.
                
                However, since this holds for every $P \in \P$, it is clear
                that $\cone(a^i)_{i \in I}$ belongs to the orthogonal
                complement of $v$, and hence does not equal $\R^k$. This
                contradicts the assumption on the vectors $a^i$ and concludes
                the proof of the lemma.
        \end{proof}


\clearpage
\bibliographystyle{plain}
\bibliography{MLFCB_bib}

\end{document}